\newtheorem{lemma}{Lemma}[section]
\newtheorem{proposition}[lemma]{Proposition}
\newtheorem{theorem}[lemma]{Theorem}
\newtheorem{corollary}[lemma]{Corollary}
\newtheorem*{theoremI}{Theorem I}
\newtheorem*{theoremII}{Theorem II}
\newtheorem*{proposition'}{Proposition}
\newtheorem*{question'}{Question}
\theoremstyle{definition}
\newtheorem{definition}[lemma]{Definition}
\newtheorem{remark}[lemma]{Remark}
\newtheorem{question}[lemma]{Question}
\newcommand{\C}{\mathbb{C}}
\newcommand{\Aut}{\mathrm{Aut}}
\newcommand{\ox}{\otimes}
\newcommand{\g}{\mathsf{g}}
\newcommand{\h}{\mathsf{h}}
\newcommand{\p}{\mathsf{p}}
\newcommand{\E}{\mathbf{E}}
\newcommand{\F}{\mathbf{F}}
\newcommand{\G}{\mathcal{G}}
\newcommand{\N}{\mathcal{N}}
\newcommand{\rep}{\mathrm{rep}}
\renewcommand{\O}{\mathscr{O}}
\renewcommand{\L}{\mathcal{L}}
\renewcommand{\sl}{\mathsf{sl}}
\renewcommand{\S}{\mathtt{S}}
\DeclareMathSymbol{\Mu}{\mathalpha}{operators}{"4D}
\DeclareMathSymbol{\Zeta}{\mathalpha}{operators}{"5A}
\title[]{Small quantum groups associated to Belavin-Drinfeld triples}
\date{\today}
\author{Cris Negron}
\thanks{This work was supported by NSF Postdoctoral Research Fellowship DMS-1503147}
\email{negronc@mit.edu}
\address{Department of Mathematics, Massachusetts Institute of Technology, Cambridge, MA 02139}
\begin{document}
\maketitle

\begin{abstract}
For a simple Lie algebra $\g$ of type $A$, $D$, $E$ we show that any Belavin-Drinfeld triple on the Dynkin diagram of $\g$ produces a collection of Drinfeld twists for Lusztig's small quantum group $u_q(\g)$.  These twists give rise to new finite-dimensional factorizable Hopf algebras, i.e. new small quantum groups.  For any Hopf algebra constructed in this manner, we identify the group of grouplike elements, identify the Drinfeld element, and describe the irreducible representations of the dual in terms of the representation theory of the parabolic subalgebra(s) in $\g$ associated to the given Belavin-Drinfeld triple.  We also produce Drinfeld twists of $u_q(\g)$ which express a known algebraic group action on its category of representations, and pose a subsequent question regarding the classification of all twists.
\end{abstract}

\section*{Introduction}

Let $\g$ be a simple Lie algebra over $\C$ of type $A$, $D$, $E$, and let $\Gamma$ be its Dynkin diagram.  A Belavin-Drinfeld triple on $\Gamma$ is a choice of two subgraphs $\Gamma_1$ and $\Gamma_2$ and an isomorphism $T:\Gamma_1\to \Gamma_2$ satisfying a certain nilpotence condition.  In~\cite[Ch. 6]{BD} Belavin and Drinfeld showed that such a triple gives rise to solutions to the classical Yang-Baxter equation in $\g\ox\g$, and in~\cite{ESS} Etingof, Schedler, and Schiffmann showed that any Belavin-Drinfeld triple gives rise to (Drinfeld) twists of the Drinfeld-Jimbo quantum group $U_{\hbar}(\g)$.  Such a twist $J$ of $U_{\hbar}(\g)$ produces a new quantum group $U_{\hbar}(\g)^J$ and new $R$-matrix, i.e. solution to the Yang-Baxter equation (see Section~\ref{sect:twinfo}).  These new solutions to the Yang-Baxter equation quantize the classical solutions of Belavin and Drinfeld, in the sense described in~\cite{EK,ESS}.  Furthermore, one can show that any twist of the Drinfeld-Jimbo quantum group, over $\C[\! [\hbar]\! ]$, arises as one of the quantizations of~\cite{ESS}, up to gauge equivalence.
\par

Here we follow the methods of~\cite{ESS,ABRR} to produce twists of Lusztig's small quantum group $u_q(\g)$ from Belavin-Drinfeld triples.  We also produce explicit twisted automorphisms of $u_q(\g)$ which arise out of an algebraic group action on its category of representations.  The action we consider here first appeared in the work of Arkhipov and Gaitsgory~\cite{arkhipovgaitsgory03}, but can also be derived from De Concini and Kac's earlier quantum coadjoint action~\cite{deconcinikac89}, as is explained in Section~\ref{sect:G} below.  Using the Belavin-Drinfeld twists, and those twists associated to the algebraic group action, we propose a question regarding the classification of all twists of the small quantum group.

\subsection*{Belavin-Drinfeld triples and twists of $u_q(\g)$}

Recall that the small quantum group is a finite dimensional quasitriangular Hopf algebra produced from the Cartan data for $\g$ and a primitive $l$th root of unity $q$.\footnote{We will need $l$ to be coprime to a small number of integers throughout this work.}  In addition to a triple $(\Gamma_1,\Gamma_2,T)$ for $\g$ we need one more piece of data $\S$.  The element $\S$ is a choice of solution to a certain equation involving $T$, which we describe below (see Section~\ref{sect:JBD}).  Given any Belavin-Drinfeld tiple $(\Gamma_1,\Gamma_2,T)$ we will have $\max\{1,l|\Gamma-\Gamma_1|(|\Gamma-\Gamma_1|-1)/2\}$ such solutions $\S$.  We show

\begin{theoremI}[\ref{thm:twists}]
Any Belavin-Drinfeld triple $(\Gamma_1,\Gamma_2,T)$ for $\g$ and solution $\S$ produces a twist $J=J_{T,\S}$ for the small quantum group $u_q(\g)$, and an associated Hopf algebra $u_q(\g)^J$.
\end{theoremI}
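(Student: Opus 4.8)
The plan is to follow the ABRR-type construction of~\cite{ESS,ABRR}, transported into the finite-dimensional root of unity setting: one writes $J_{T,\S}$ as an explicit product of a ``parabolic'' factor attached to $T$ and a ``toral'' factor attached to $\S$, and then verifies the twist axioms directly. Recall that an element $J\in u_q(\g)^{\ox 2}$ is a Drinfeld twist precisely when it is invertible, counital ($(\epsilon\ox 1)(J)=(1\ox\epsilon)(J)=1$), and satisfies the $2$-cocycle identity $(\Delta\ox 1)(J)\,(J\ox 1)=(1\ox\Delta)(J)\,(1\ox J)$. Once such a $J$ is produced, $u_q(\g)^J$ — the same algebra with $\Delta^J(a)=J\Delta(a)J^{-1}$ — is automatically a Hopf algebra, and in fact quasitriangular with $R^J=J_{21}RJ^{-1}$, by Drinfeld's standard argument on twisting; so the entire content of the statement is the verification of invertibility, counitality, and the cocycle identity for the prescribed element.

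First I would construct and analyze the parabolic factor $J_T$. The subgraph $\Gamma_1$ determines a Levi subalgebra $\l_1\subseteq\g$ — the full Cartan together with the root subspaces indexed by $\Gamma_1$ — whose quantum analogue sits inside $u_q(\g)$ as a sub-Hopf-algebra carrying a quasi-$R$-matrix $\Theta_{\l_1}$, and similarly for $\Gamma_2$ and $\l_2$, with $T$ lifting to a Hopf algebra isomorphism $u_q(\l_1)\to u_q(\l_2)$. Following~\cite{ABRR}, one builds $J_T$ by an iteration indexed by the $T$-orbits of simple roots, at each stage solving a linear ABRR-type fixed-point equation whose solution — an infinite product in the $\hbar$-adic theory — here collapses to a finite product because $u_q(\g)$ is finite dimensional and sufficiently long products of root vectors vanish. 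The iteration terminates because $T$ is nilpotent, which is exactly what the Belavin--Drinfeld condition guarantees. \emph{This step is where the real work lies:} one must show that the scalars inverted while solving these fixed-point equations — expressions built from powers of $q$ of the shape $q^{(\alpha,\beta)}-1$ — are nonzero, which is precisely what the coprimality hypotheses on $l$ (fixed in Section~\ref{sect:JBD}) deliver, so that $J_T$ is a well-defined invertible counital element of $u_q(\g)^{\ox 2}$ at the root of unity.

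It then remains to incorporate the toral factor $J_\S$ and conclude. The element $J_T$ need not satisfy the cocycle identity on its own; its failure to do so is measured by an explicit element supported on $u_q(\h)$, and since the grouplikes of $u_q(\g)$ form a finite abelian group, any bicharacter on the weight lattice modulo $l$ produces an invertible counital element of $u_q(\h)^{\ox 2}$. The role of $J_\S$ is to compensate this defect, and the equation defining $\S$ recalled in Section~\ref{sect:JBD} is precisely the statement that $J_{T,\S}:=J_\S\,J_T$ satisfies the cocycle identity — a short computation via the standard ``composition of twists'' lemma, modeled on the corresponding verification in~\cite{ESS}. Counitality of the product is inherited from its two factors and invertibility is clear, so $J_{T,\S}$ is a twist and the associated quasitriangular Hopf algebra $u_q(\g)^J$ exists by the remarks of the first paragraph. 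To reiterate, the main obstacle is not this cocycle bookkeeping but the root-of-unity analysis of the previous paragraph: establishing truncation of the defining products and nonvanishing of every scalar inverted along the way, so that $J_T$, and hence the equation $\S$ must solve, make sense inside $u_q(\g)^{\ox 2}$ in the first place.
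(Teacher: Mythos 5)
Your proposal is in the right neighborhood --- you correctly identify the ABRR fixed-point machinery as the key tool, you correctly reduce to verifying invertibility, counitality, and the dual cocycle identity, and you correctly observe that the nilpotence of $T$ makes the iteration terminate at the root of unity. But the way you propose to close the cocycle identity differs from the paper's argument in a way that leaves a genuine gap.

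The paper does not factor $J_{T,\S}$ as a product $J_\S\,J_T$ of a toral twist and a parabolic twist whose cocycle defects cancel. Instead, the toral element $\S^{-1}\Omega_{L^\perp}^{-1/2}$ enters as the degree-zero \emph{initial condition} for the ABRR iteration: the $2$-component ABRR equation is shown to have a unique solution in $\S^{-1}\Omega_{L^\perp}^{-1/2} + I_+\ox I_-$ (Lemma~\ref{lem:355}), and $J_{T,\S}$ is that solution (the lemma following it). The cocycle identity is then proved by a separate uniqueness argument: one introduces $3$-component ABRR operators $A^3_L, A^3_R$, shows that $J_{12,3}J_{12}$ and $J_{1,23}J_{23}$ each solve the mixed equation $A^3_L A^3_R(X)=X$ (Proposition~\ref{prop:mixedABRR}), observes that solutions of the mixed equation are determined by their component in $\C[G]\ox u_q\ox\C[G]$, and checks that both sides have the same such component~\eqref{eq:492}. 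This ``both sides solve the same equation with the same initial data'' argument is the heart of the proof, and it has no analogue in your proposal.

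The concrete gap in your outline is the claim that the cocycle defect of the parabolic factor $J_T$ is ``supported on $u_q(\h)$'' and can therefore be corrected by a toral bicharacter. That claim is not justified and is not obviously true: the cocycle defect of a product $(T_+\ox 1)(R)\cdots(T^n_+\ox 1)(R)(\cdots)$ involves coproducts of $R$-matrix factors and a priori lives in all of $u_q(\g)^{\ox 3}$, not in $\C[G]^{\ox 3}$. Without this, the ``composition of twists'' step does not close. You also mislocate where the arithmetic hypotheses on $l$ enter: there is no inversion of scalars of the form $q^{(\alpha,\beta)}-1$ in the iteration (the higher-degree pieces are determined for free by nilpotence of $I_+$); rather, the coprimality conditions on $l$ are used to make the bilinear form nondegenerate on various subgroups of $\G$, which is what produces the splittings $\G=\G_i\times\L$, the extension of $T$ to $\G$ (Lemma~\ref{lem:GT}), and the existence of solutions $\S$ to~\eqref{eq:seq} (Lemma~\ref{lem:solex}). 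Equation~\eqref{eq:seq} itself is used to verify that $\S^{-1}\Omega_{L^\perp}^{-1/2}$ solves the degree-zero ABRR equation --- not, as you suggest, to cancel a $3$-cocycle defect.
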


This twist $J_{T,\S}$ is given explicitly by the formula
\[
J_{T,\S}=(T_+\ox 1)(R)\dots (T^n_+\ox 1)(R)\S^{-1}\Omega_{L^\perp}^{-1/2}(T^n\ox 1)(\Omega)^{-1}\dots(T\ox 1)(\Omega)^{-1}
\]
where $R$ is the $R$-matrix for $u_q(\g)$, $\Omega$ is an element representing the Killing form, and $T_+$ is an extension of $T$ to an endomorphism of the positive quantum Borel in $u_q(\g)$.  The above theorem is a non-dynamical analog of~\cite[Sect. 5.2]{EN01}, and a discrete version of~\cite[Cor. 6.1]{ESS}.
\par

Recall that for any twist $J$ of a Hopf algebra $H$ we will have a canonical equivalence between the associated tensor categories of finite dimensional representations $\mathrm{rep}(H)\overset{\sim}\to \mathrm{rep}(H^J)$.  In addition to  studying the relationship between Belavin-Drinfeld triples and solutions to the Yang-Baxter equation (i.e. $R$-matrices) for finite dimensional Hopf algebras, we want to study variances of Hopf structures under tensor-equivalence.  With this purpose in mind we give an in depth study of the Hopf algebras $u_q(\g)^J$ arising from our twists.
\par

For the remainder of the introduction fix $J=J_{T,\S}$ the twist associated to some Belavin-Drinfeld data $(\Gamma_1,\Gamma_2,T)$ and $\S$.  Using the new $R$-matrix for $u_q(\g)^J$, in conjunction with the frameworks of~\cite{radford93}, we identify the grouplike elements of $u_q(\g)^J$, show that the Drinfeld element for $u_q(\g)^J$ is equal to that of the untwisted algebra $u_q(\g)$, verify invariance of the traces of the powers of the antipode under the twists $J=J_{T,\S}$, and classify irreducible representations of the dual.  (See Corollaries~\ref{cor:grouplikes},~\ref{cor:u},~\ref{cor:S}, and Theorem~\ref{thm:dual} below.)  Our analyses of the Drinfeld element and antipode give positive answers to some general questions of~\cite{negronng} and~\cite{shimizu15} in the particular case of Belavin-Drinfeld twists of the small quantum group.  We describe our result on irreducibles more explicitly below.
\par

In the statement of the following theorem we let $\p^{ss}$ be the semisimple Lie algebra associated to the Dynkin diagram $\Gamma_1$ appearing in the Belavin-Drinfeld triple $(\Gamma_1,\Gamma_2, T)$.

\begin{theoremII}[\ref{thm:dual}]
There is an abelian group $\L$ of order $l(|\Gamma-\Gamma_1|)$ and bijection
\[
\mathrm{Irrep}\left(\C[\L]\ox u_q(\p^{ss})\right)\overset{\cong}\to \mathrm{Irrep}\left((u_q(\g)^J)^\ast\right)
\]
induced by a surjective algebra map $(u_q(\g)^J)^\ast\to \C[\L]\ox u_q(\p^{ss})$.
\end{theoremII}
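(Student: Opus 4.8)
The plan is to present the asserted surjection as the transpose of a Hopf embedding into $u_q(\g)^J$ and then to convert the bijectivity on irreducibles into an inclusion of coradicals. Since $u_q(\g)^J$ and $u_q(\g)$ have the same underlying algebra, $(u_q(\g)^J)^\ast$ has the same underlying coalgebra as $u_q(\g)^\ast$, and an $(u_q(\g)^J)^\ast$-module is precisely a $u_q(\g)^J$-comodule; hence $\mathrm{Irrep}\big((u_q(\g)^J)^\ast\big)$ is the set of simple subcoalgebras occurring in the coradical $C_0\big(u_q(\g)^J\big)$. A surjective Hopf map $(u_q(\g)^J)^\ast\twoheadrightarrow\C[\L]\ox u_q(\p^{ss})$ is the same datum as a Hopf embedding $\iota\colon\C^\L\ox u_q(\p^{ss})^\ast\hookrightarrow u_q(\g)^J$, and for any such $\iota$ the induced map on $\mathrm{Irrep}$ is injective, with image the simple modules annihilated by $(\mathrm{im}\,\iota)^\perp$; since $\mathrm{rad}\big((u_q(\g)^J)^\ast\big)=\big(C_0(u_q(\g)^J)\big)^\perp$, this map is a bijection exactly when $C_0\big(u_q(\g)^J\big)\subseteq\mathrm{im}\,\iota$. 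The theorem thus reduces to (i) constructing $\iota$ with the stated source, and (ii) proving the coradical of $u_q(\g)^J$ lies inside it.

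For (i) I would read the structure of $J=J_{T,\S}$ off of Theorem~\ref{thm:twists}. Each factor $(T_+\ox1)(R)$ equals a Cartan element times $1+\sum_\beta T_+^k(E_\beta)\ox F_\beta+\cdots$, with $\beta$ ranging only over positive roots supported on $\Gamma_1$ (for all other $\beta$, $T_+^kE_\beta=0$); since $T$ maps into $\Gamma_2$, every first tensor-leg of $J$ lies in $u_q(\mathsf{n}^+_{\Gamma_2})\C[\Lambda]$, every second leg in $u_q(\mathsf{n}^-_{\Gamma_1})\C[\Lambda]$, and the Cartan correction $\S^{-1}\Omega_{L^\perp}^{-1/2}(T^n\ox1)(\Omega)^{-1}\cdots(T\ox1)(\Omega)^{-1}$ lies in $\C[\Lambda]\ox\C[\Lambda]$. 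Hence $J$ normalizes the tensor square of the ``mixed quantum Levi'' $A:=u_q(\mathsf{n}^-_{\Gamma_1})\cdot\C[\Lambda]\cdot u_q(\mathsf{n}^+_{\Gamma_2})\subseteq u_q(\g)$, a Hopf subalgebra of dimension $l^{\dim\l}=l^{|\Gamma-\Gamma_1|}\,l^{\dim\p^{ss}}$, so that $A$ is also a Hopf subalgebra of $u_q(\g)^J$ (with comultiplication $\Delta^J=J\Delta(-)J^{-1}$). One then checks that $(A,\Delta^J)$ is isomorphic, as a Hopf algebra, to $\big(\C[\L]\ox u_q(\p^{ss})\big)^\ast=\C^\L\ox u_q(\p^{ss})^\ast$: the factor $\C^\L$ accounts for the torus quotient $\L$ of the grouplikes $\Lambda$ identified in Corollary~\ref{cor:grouplikes}, while the skew-primitive generators $F_i$, $E_j$ $(i\in\Gamma_1,\,j\in\Gamma_2)$ together with the $\Gamma_1$-coroot part of $\Lambda$ reassemble, after dualizing, into $u_q(\p^{ss})$. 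Taking $\iota$ to be this isomorphism gives the surjection $\pi=\iota^\ast$ of the statement.

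The main obstacle is (ii), the inclusion $C_0\big(u_q(\g)^J\big)\subseteq A$, which I would establish by a filtration argument. Filter $u_q(\g)$ by the number of ``excess'' root vectors occurring in a PBW monomial — the $E_\beta$ with $\beta$ not supported on $\Gamma_2$ together with the $F_\beta$ with $\beta$ not supported on $\Gamma_1$ — so that the zeroth piece is exactly $A$. Using the description of $J$ above, $\Delta^J$ is a filtered map: the nilpotent part of $J$ raises excess-degree in one tensor leg while $J^{-1}$ lowers it by the same amount, so the leading term of $\Delta^J$ on the associated graded is a torus (Cartan) twist of the coradically graded coalgebra of $u_q(\g)$. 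A torus twist does not move any simple subcoalgebra out of the bottom of the filtration, so every excess generator stays conilpotent for $\Delta^J$ and contributes nothing new to the coradical; therefore $C_0\big(u_q(\g)^J\big)\subseteq A$, and together with (i) this completes the proof. The delicate points are: checking that the excess-degree filtration is simultaneously an algebra and a coalgebra filtration for $\Delta^J$, and controlling the leading term on the associated graded — here the precise shape of the twist, including the defining property of $\S$, is essential; verifying that $\Omega_{L^\perp}^{-1/2}$ and the $(T^k\ox1)(\Omega)^{-1}$ are well-defined and that $u_q(\p^{ss})$ is again an honest small quantum group, which is where the coprimality conditions on $l$ are used; and the dimension bookkeeping $l^{\dim\l}=l^{|\Gamma-\Gamma_1|+\dim\p^{ss}}$ that pins down $A$.
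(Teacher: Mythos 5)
Your high-level reduction is the same as the paper's: both proofs ultimately establish that the coradical of $u_q(\g)^J$ lies in the subalgebra $\mathtt{Int}=u_q(\p_1)\cap u_q(\p_2)=\C\langle G,\E_\beta,\F_\alpha:\beta\in\Gamma_2,\alpha\in\Gamma_1\rangle$ (dually, that the kernel of $(u_q^J)^\ast\to\mathtt{Int}^\ast$ is nilpotent), and then identify $\mathtt{Int}^\ast$ with $\C[\L]\ox u_q(\p^{ss})$ as an algebra. But your route to the containment is genuinely different, and as written it has a gap. The filtration ``by the number of excess root vectors in a PBW monomial'' is not a coalgebra filtration. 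Take, say, $\g=\sl_4$, $\Gamma_2=\{\alpha_2\}$, and $\mu=\alpha_1+\alpha_2+\alpha_3$: then $E_\mu$ has excess count $1$, but $\Delta(E_\mu)$ contains cross-terms like $E_{\alpha_1+\alpha_2}K_{\alpha_3}\ox E_{\alpha_3}$ of total excess count $2$. Your claim that ``the nilpotent part of $J$ raises excess-degree in one tensor leg while $J^{-1}$ lowers it by the same amount'' is also off: both legs of every nilpotent term $E_{T^k\nu}\ox F_\nu$ of $J$ are supported on $\Gamma_2$ and $\Gamma_1$ respectively, so $J$ and $J^{-1}$ lie entirely in $\mathtt{Int}\ox\mathtt{Int}$, i.e.\ are of excess degree zero in both legs — conjugation by $J$ raises and lowers nothing. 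The fix is to filter instead by the height of the projection of the $\mathbb{Z}\Gamma$-degree onto $\mathbb{Z}(\Gamma-\Gamma_2)$ (for the $E$-side) plus that onto $\mathbb{Z}(\Gamma-\Gamma_1)$ (for the $F$-side); this is additive under $\Delta$ because $\Delta$ preserves the root-lattice bidegree, its degree-zero part is $\mathtt{Int}$, and since $J\in\mathtt{Int}\ox\mathtt{Int}$ it is a $\Delta^J$-coalgebra filtration as well. With that replacement the coalgebra-filtration general principle gives $C_0(u_q^J)\subseteq\mathtt{Int}$, which is a clean alternative to the paper's argument. (The paper instead deduces nilpotence of the kernel via Radford's subalgebras $R^J_{(l)},R^J_{(r)}$, their explicit description in Proposition~\ref{prop:R}, and factorizability of $u_q^J$ — Lemmas~\ref{lem:L4} and~\ref{lem:L5}.)

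The other step you compress — ``one then checks that $(A,\Delta^J)$ is isomorphic, as a Hopf algebra, to $\C^\L\ox u_q(\p^{ss})^\ast$'' — is where the real technical content of the paper lives, and your sketch overclaims. The paper obtains only an \emph{algebra} isomorphism $\mathtt{Int}^\ast\cong\C[\L]\ox u_q(\p^{ss})$, and even that requires the full strength of the explicit computation of $R^J_{(r)}$ and $R^J_{(l)}$ (Proposition~\ref{prop:R}, Corollary~\ref{cor:R}), the coalgebra factorization $\mathtt{Int}\cong\C[\mathscr{C}]\ox A$ of Lemma~\ref{lem:L5}, and an \emph{abstract} group-ring isomorphism $\C[\mathscr{C}^\vee\times\Lambda]\cong\C[\L]$ coming from an order count in Lemma~\ref{lem:L6}. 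There is no claim that the surjection of the theorem is a Hopf map, and you would need a separate argument to justify a Hopf isomorphism. Finally, a minor notational slip: Corollary~\ref{cor:grouplikes} identifies the grouplikes of $u_q^J$ with $L$, so the group of units in your ``mixed quantum Levi'' must be the full Cartan $G$ for your dimension count $l^{\dim\l}$ to come out right; conflating it with the grouplike set $L$ loses a factor $l^{|\Gamma_1|}$.
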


By comparison, for the untwisted algebra $u_q(\g)$ we have that $\mathrm{Irrep}\left(u_q(\g)^\ast\right)=(\mathbb{Z}/l\mathbb{Z})^{|\Gamma|}$, and the representation theory of the dual is rather banal from the perspective or irreducibles and the fusion rule.  After twisting $u_q(\sl_{n+1})$, for example, we can have a copy of the rather rich category $\mathrm{rep}\left(u_q(\sl_n)\right)$ in the category of representations for the dual $(u_q(\sl_{n+1})^J)^\ast$.  This will specifically be the case for (what we call) maximal triples on $A_n$.  One should compare this result to~\cite[Thm. 5.4.1]{EN01}.

\subsection*{The Arkhipov-Gaitsgory action and twisted automorphisms}

Take ${\Theta}$ the connected, simply connected, semisimple algebraic group with Lie algebra $\g$.  In~\cite{arkhipovgaitsgory03} Arkhipov and Gaitsgory show that the category $\rep(u_q(\g))$ is tensor equivalent to a de-equivariantization of the category of corepresentations of the quantum function algebra $\O_q({\Theta})$.  The de-equivariantization is a certain (non-full) monoidal subcategory in $\mathrm{Coh}({\Theta})$ which inherits a natural action of ${\Theta}$ by left translation (see~\cite{agp14,EGNO}).  From the aforementioned equivalence we then get an action of ${\Theta}$ on $\rep(u_q(\g))$.
\par

According to general principles, any autoequivalence of $\rep(u_q(\g))$ should be expressible as a twisted automorphism $(\phi,J)$, i.e. a pair of a twist $J$ and a Hopf isomorphism $\phi:u_q(\g)\to u_q(\g)^J$.  Hence, the action of ${\Theta}$ should generate twists of $u_q(\g)$.
\par

In Section~\ref{sect:G} we show that any simple root $\alpha$ of $\g$, or its negation $-\alpha$, has an associated $1$-parameter family of twisted automorphisms $(\exp_{\pm\alpha}^\lambda,J^\lambda_{\pm\alpha})$, which then give a $1$-parameter subgroup $\omega_{\pm\alpha}$ in the group of autoequivalences of $\rep(u_q(\g))$.  We identify these $1$-parameter subgroups $\omega_{\pm\alpha}$ with the action of Arkhipov and Gaitsgory.

\begin{proposition'}[\ref{prop:G}]
For $\gamma_{\pm\alpha}:\mathbb{C}\to {\Theta}$ the $1$-parameter subgroup given by exponentiating the root space $\g_{\pm\alpha}$, we have a diagram
\[
\xymatrixrowsep{3mm}
\xymatrix{
 & {\Theta}\ar[drr]^{\mathrm{AG\ actn}} &\\
\mathbb{C}\ar[ur]^{\gamma_{\pm\alpha}}\ar[rrr]|(.4){\omega_{\pm\alpha}} & & & \mathrm{Aut}(\rep(u_q(\g))),
}
\]
where $\omega_{\pm\alpha}$ is the $1$-parameter subgroup specified by the twisted automorphisms $(\exp_{\pm\alpha}^\lambda,J^\lambda_{\pm\alpha})$.
\end{proposition'}

This result allows us to produce an explicit action of ${\Theta}$ on the collection $\mathtt{TW}(u_q(\g))$ of gauge equivalence classes of twists.  We let $\mathtt{BD}(u_q(\g))\subset \mathtt{TW}(u_q(\g))$ denote the subcollection of Belavin-Drinfeld twists $\{J_{T,\S}\}_{T,\S}$.  We pose the following question, which is also raised in~\cite{DEN}.

\begin{question'}[\ref{quest:G}]
Do the Belavin-Drinfeld twists and the $1$-parameter subgroups $\{(\exp^\lambda_{\pm \alpha},J^\lambda_{\pm\alpha})\}_{\lambda,\alpha}$ generate all twists of $u_q(\g)$?  Equivalently, is the inclusion $\mathtt{BD}(u_q(\g))\cdot {\Theta}\to \mathtt{TW}(u_q(\g))$ an equality?
\end{question'}

As was stated above, for the Drinfeld-Jimbo algebra $U_\hbar(\g)$ one can show that the Belavin-Drinfeld twists are the only twists, up to gauge equivalence.  So the appearance of ${\Theta}$ here already marks a deviation from the generic setting.

\subsection*{Organization}

Sections~\ref{sect:bg} and~\ref{sect:twinfo} are dedicated to background.  In Section~\ref{sect:JBD} we introduce and prove Theorem I.  In Sections~\ref{sect:radford} and~\ref{sect:radford2} we analyze relations between Radford's left and right subalgebras $R^J_{(l)}$ and $R^J_{(r)}$ in $u_q(\g)^J$ and the quantum parabolics associated to $\Gamma_1$ and $\Gamma_2$.  We prove an explicit description of the $R^J_{(\ast)}$ in Section~\ref{sect:proof}, which leads to the proof of Theorem II in Section~\ref{sect:reptheory}.  In Section~\ref{sect:u} we discuss the Drinfeld element and antipode of such a twist $u_q(\g)^J$.  Section~\ref{sect:G} is dedicated to the action of the algebraic group ${\Theta}$ on $\rep(u_q(\g))$.

\subsection*{Acknowledgements}
Thanks to Pavel Etingof for many helpful conversations and for providing a simplification of the proof of Proposition~\ref{prop:preu}.

\section{The small quantum group, Belavin-Drinfeld triples, and associated subgroups in the Cartan}
\label{sect:bg}

We introduce the small quantum group $u_q(\g)$, then give some information on the Cartan subgroup $G=G(u_q(\g))$ and Belavin-Drinfeld tiples.

\subsection{The small quantum group}

Take $\g$ a simple and simply laced Lie algebra, i.e. a Lie algebra of type $A$, $D$, $E$.  Let $\Phi$ be a root system for $\g$ (in the dual of some Cartan), $\Gamma$ be a choice of simple roots, and $l$ be an odd integer coprime to the determinant of the Cartan matrix for $\g$.  Let $(?,?)$ be the scaling of the Killing form so that each $(\alpha,\beta)$ is the Cartan integer for simple roots $\alpha,\beta$.  Take $q$ a primitive $l$th root of unity.
\par

The small quantum group $u_q(\g)$ is the Hopf algebra
\[
u_q(\g)=\C\langle K_\alpha,E_\alpha,F_\alpha:\alpha\in \Gamma\rangle/(\mathrm{Rels}),
\]
where $\mathrm{Rels}$ is the set of relations
\[
[K_{\alpha},K_{\beta}]=0,\ \  K_{\alpha}E_\beta=q^{(\alpha,\beta)}E_\beta K_\alpha,\ \   K_\alpha F_\beta=q^{-(\alpha,\beta)} F_\beta K_\alpha,
\]
\[
{[E_\alpha,F_\beta]}=\delta_{\alpha,\beta}\frac{K_\alpha-K_\alpha^{-1}}{q-q^{-1}},
\]
\[
{[E_\alpha,E_\beta]}=[F_\alpha,F_\beta]=0\ \ \mathrm{when}\ \ (\alpha,\beta)=0,
\]
\[
E^2_\alpha E_\beta-(q+q^{-1})E_\alpha E_\beta E_\alpha+E_\beta E_\alpha^2,\ \ \mathrm{when}\ \ (\alpha,\beta)=-1,
\]
\[
F^2_\alpha F_\beta-(q+q^{-1})F_\alpha F_\beta F_\alpha+F_\beta F_\alpha^2\ \   \mathrm{when}\ \ (\alpha,\beta)=-1.
\]
\begin{equation}\label{eq:173}
K_\alpha^l=1,\ \  E_\mu^l=F_\mu^l=0\ \ \forall\ \mu\in \Phi^+.
\end{equation}
We will explain the (currently opaque) relations~\eqref{eq:173} more clearly below.  The coproduct is given by
\[
\Delta(K_{\alpha})=K_\alpha\ox K_\alpha,\ \ \Delta(E_\alpha)=E_\alpha\ox 1+ K_\alpha\ox E_\alpha,\ \ \Delta(F_\alpha)=F_\alpha\ox K_{\alpha}^{-1}+1\ox F_\alpha
\]
and the antipode is given by
\[
S(K_\alpha)=K_{\alpha}^{-1},\ \ S(E_\alpha)=-K^{-1}_\alpha E_\alpha,\ \ S(F_\alpha)=-F_\alpha K_\alpha.
\]

We let $G$ denote the group of grouplikes in $u_q(\g)$, $u^+$ and $u^-$ denote the subalgebras generated by the $E_\alpha$ and $F_\alpha$ respectively, and $u_+$ and $u_-$ denote the positive and negative quantum Borels in $u_q(\g)$.  Note that $G$ is generated by the $K_\alpha$ and that under the adjoint action of $G$ on $u^\pm$ we will have $u_{\pm}=u^{\pm}\rtimes \C[G]$.  Note also that $u_q(\g)$ and the $u_{\pm}$ are graded by the root lattice $\mathbb{Z}\cdot \Gamma$, where the generators $E_{\alpha}$, $F_{\alpha}$, and $K_{\alpha}$ have degrees $\alpha$, $-\alpha$, and $0$ respectively.
\par

We would like to employ Lusztig's standard basis for $u_q(\g)$, which we review here.  Recall that for a reduced expression $w=\sigma_{\alpha_1}\dots\sigma_{\alpha_t}$ of the longest word $w$ in the Weyl group, in terms of the simple reflections, we have $\mathrm{length}(w)=|\Phi^+|$ and 
\begin{equation}\label{eq:order}
\Phi^+=\{\sigma_{\alpha_1}\dots\sigma_{\alpha_{i-1}}(\alpha_i):1\leq i\leq \mathrm{length}(w)\}.
\end{equation}
(See e.g.~\cite{zhelobenko87}.)  For each simple root $\alpha$ there is an automorphism $B_{\alpha}$ of $u_q$ so that the $B_{\alpha}$ together give an action of the braid group $B(\Gamma)$ on $u_q$~\cite{lusztig90}.\footnote{Our $B_\alpha$ are the $T_\alpha$ from~\cite{lusztig90}.}  Now for each $\mu\in \Phi^+$ we take
\[
E_{\mu}=B_{\alpha_1}\dots B_{\alpha_{i-1}}(E_{\alpha_i}),\ \ \mathrm{and}\ \ F_{\mu}=B_{\alpha_1}\dots B_{\alpha_{i-1}}(F_{\alpha_i}),
\]
where $\mu=\sigma_{\alpha_1}\dots\sigma_{\alpha_{i-1}}(\alpha_i)$.  The $E_\mu$ and $F_{\mu}$ defined here are the elements appearing in the above relations~\eqref{eq:173}.

\begin{theorem}[\cite{lusztig90}]\label{thm:basis}
For each $\mu\in \Phi^+$, the element $E_{\mu}$ (resp. $F_{\mu}$) is homogeneous of degree $\mu$ (resp. $-\mu$) with respect to the root lattice grading on $u_q(\g)$.  Furthermore, the collection of elements
\[
\{\prod_{\mu\in\Phi^+}E_\mu^{n_\mu}:0\leq n_\mu\leq l-1\},\ \ \{\prod_{\nu\in\Phi^+}F_\nu^{m_\nu}:0\leq m_\nu\leq l-1\},
\]
give $\C$-bases for $u^+$ and $u^-$ respectively, and
\[
\{(\prod_{\nu\in\Phi^+}F_\nu^{m_\nu})(\prod_{\mu\in\Phi^+}E_\mu^{n_\mu}):0\leq n_\mu,m_\nu\leq l-1\}
\]
gives a $\C[G]$-basis for $u_q(\g)$. 
\end{theorem}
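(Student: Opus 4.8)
This is Lusztig's PBW theorem for the Frobenius--Lusztig kernel, and the plan is to deduce it from the analogous statement over $\C(q)$ by passing to Lusztig's integral form and specializing $q$ to the root of unity. The homogeneity assertion is essentially formal: each braid automorphism $B_\alpha$ sends the generators $E_\beta,F_\beta,K_\beta$ to elements that are homogeneous, of degrees $\sigma_\alpha(\beta)$, $-\sigma_\alpha(\beta)$, $0$ respectively, as one reads off directly from Lusztig's formulas for the $B_\alpha$ on generators. Since $\sigma_\alpha$ is linear on the root lattice, $B_\alpha$ carries the degree $\gamma$ component of $u_q(\g)$ into the degree $\sigma_\alpha(\gamma)$ component; applying this to $E_\mu=B_{\alpha_1}\cdots B_{\alpha_{i-1}}(E_{\alpha_i})$ and invoking the description of $\Phi^+$ in~\eqref{eq:order} gives $\deg E_\mu=\sigma_{\alpha_1}\cdots\sigma_{\alpha_{i-1}}(\alpha_i)=\mu$, and symmetrically for $F_\mu$.

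For the basis statements I would first recall the generic picture: over $\C(q)$ (with $q$ an indeterminate) the positive part $U^+$ of the Drinfeld--Jimbo algebra has $\C(q)$-basis the ordered monomials $\prod_{\mu\in\Phi^+}E_\mu^{n_\mu}$, $n_\mu\geq 0$, in the order of~\eqref{eq:order}, and this remains a basis of the integral form ${}_{\mathcal A}U^+$ over $\mathcal A=\mathbb Z[q^{\pm1}]$. The mechanism is the Levendorskii--Soibelman straightening relations
\[
E_\mu E_\nu - q^{(\mu,\nu)}E_\nu E_\mu\ \in\ \mathrm{span}\Big\{\textstyle\prod_{\mu<\rho<\nu}E_\rho^{c_\rho}\Big\}\qquad(\mu<\nu),
\]
which follow from the Serre relations together with braid-group manipulations and show the ordered monomials span; their linear independence follows by comparing with the independently-known graded dimensions of $U^+$ (or by an induction on the rank through subalgebras attached to sub-Dynkin diagrams).

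Next I would specialize $\mathcal A\to\C$, $q\mapsto$ the chosen root of unity, and truncate. Flatness of ${}_{\mathcal A}U^+$ over $\mathcal A$ keeps the ordered monomials independent after specialization, so the non-restricted algebra $U_q^+$ still has the PBW basis; at a primitive $l$-th root of unity the straightening relations degenerate so that each $E_\mu^l$ becomes central in $U_q^+$ (De Concini--Kac), the subalgebra $Z_0^+$ they generate is a polynomial ring, and $U_q^+$ is free over $Z_0^+$ with basis $\{\prod E_\mu^{n_\mu}:0\leq n_\mu\leq l-1\}$. Since the algebra $u^+$ of the theorem is exactly $U_q^+/(E_\mu^l:\mu)$, this freeness yields the asserted $\C$-basis, and the Cartan anti-involution transports it to $u^-$. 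Finally the relations $K_\alpha E_\beta=q^{(\alpha,\beta)}E_\beta K_\alpha$ and $[E_\alpha,F_\beta]=\delta_{\alpha\beta}(K_\alpha-K_\alpha^{-1})/(q-q^{-1})$ let one rewrite any monomial in the generators in triangular form $(\text{negatives})(\text{torus})(\text{positives})$, so that $u^-\otimes\C[G]\otimes u^+\to u_q(\g)$ is surjective; equality of dimensions --- $u_q(\g)$ being the Frobenius--Lusztig kernel, of dimension $l^{\dim\g}$ --- or the nondegeneracy of the skew-Hopf pairing between $u^+$ and $u^-$ upgrades this to the claimed $\C[G]$-basis.

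The main obstacle is precisely this passage to the root of unity: one must know that specialization introduces no new relations among the PBW monomials, equivalently that $\dim u_q(\g)=l^{\dim\g}$. I expect to dispatch this via flatness of Lusztig's integral form together with the De Concini--Kac analysis of the central subalgebra $Z_0$ (or, if one prefers, by computing the dimension of $u_q(\g)$ independently); everything else --- homogeneity, spanning, and the triangular rearrangement --- is formal given the relations and the braid group action already invoked in the statement.
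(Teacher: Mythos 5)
Your homogeneity argument precisely reproduces the paper's (one-line) remark following the theorem: the braid group operators satisfy $\deg(B_\alpha(a))=\sigma_\alpha(\deg(a))$, so composing along a reduced word for $w_0$ forces $\deg E_\mu=\mu$. That is the only part of this statement the paper actually proves; for the PBW bases themselves the paper simply cites Lusztig and moves on.

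For the basis statement your sketch is plausible but takes a noticeably different route from Lusztig's own. You work through the De Concini--Kac non-restricted picture: PBW over $\C(q)$ via Levendorskii--Soibelman straightening, flatness of the $\mathbb Z[q^{\pm1}]$-form, centrality of $E_\mu^l$ at a root of unity, freeness over $Z_0^+$, and then identify $u^+$ with $U_q^+/(E_\mu^l)$. That last identification is itself a nontrivial theorem (the Lusztig small quantum group is \emph{defined} inside the divided-power form, and showing its presentation coincides with the De Concini--Kac quotient requires work); Lusztig's original argument instead operates entirely inside the divided-powers algebra using the $\mathbb Z[q^{\pm1}]$-form $\mathbf U_{\mathcal A}$ and a Frobenius-type homomorphism. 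Both routes land in the same place, and yours is a standard alternative, but you should either cite the known compatibility between the restricted and non-restricted kernels or replace the final step with Lusztig's direct argument inside $\mathbf U_{\mathcal A}$. You also correctly flag that the dimension count $\dim u_q(\g)=l^{\dim\g}$ cannot be assumed independently of the PBW statement; the nondegeneracy of the skew-Hopf pairing is the cleaner way to close that loop.
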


Homogeneity of the $E_\mu$ is equivalent to the statement that $E_\mu$ is a linear combination of permutations of the monomial $E_{\alpha_1}\dots E_{\alpha_k}$, where $\mu=\alpha_1+\dots+\alpha_k$ with the $\alpha_i\in \Gamma$.  The analogous statement holds for the $F_\nu$ as well.  We note that the homogeneity is not covered in~\cite{lusztig90}, but can easily be seen from the fact that each braid group operator $B_\alpha$ is such that $\deg(B_\alpha(a))=\sigma_\alpha(\deg(a))$, for homogeneous $a\in u_q$.  From the identifications $u_\pm=u^\pm\rtimes \C[G]$ the $\C$-bases for $u^{\pm}$ produce $\C[G]$-bases for the quantum Borels.
\par

We recall finally that $u_q(\g)$ is quasi-triangular.  The $R$-matrix is
\[
R=\prod_{\mu\in\Phi^+}\left(\sum_{n=0}^{l-1}q^{-n(n+1)/2}\frac{(1-q^2)^n}{[n]_q!} E^n_{\mu}\ox F^n_{\mu}\right)\Omega,
\]
where $[n]_q!$ is the standard $q$-factorial, $\Omega\in \mathbb{C}[G]\ox \mathbb{C}[G]$ is such that $(\mu\ox \nu)(\Omega)=q^{(\mu,\nu)}$ for each $\mu,\nu\in G^\vee$, and the product is ordered with respect to the ordering on $\Phi^+$ given by~\eqref{eq:order} (see~\cite{turaev}).

\subsection{Belavin-Drinfeld triples and subgroups of $G$}
\label{sect:bdt}

We recall some information from~\cite{EN01}.  Let $\h$ be the Cartan subalgebra in $\g$, for $\g$, $\Phi$, $\Gamma$, and $l$ as above.  Let $\h_{\mathbb{Z}}^\ast=\mathbb{Z}\cdot \Gamma$ be the root lattice in $\h^\ast$.  Take now
\[
\G=\h^*_{\mathbb{Z}}/l\h^*_{\mathbb{Z}}\ \ \mathrm{and}\ \ G=\G^{\vee}.
\]
Recall that, since $\g$ is simply laced, there is a unique scaling of the Killing form on $\h^*$ which produces an integer valued form on $\h^\ast_\mathbb{Z}$ with $(\alpha,\alpha)=2$ for each $\alpha\in \Gamma$.  By our assumption on $l$ this scaling of the Killing form induces a non-degenerate symmetric bilinear form on the quotient $(?,?):\G\times \G\to \mathbb{Z}/l\mathbb{Z}$.  This gives an identification $\G\to G$, $\alpha\mapsto (\alpha,?)$, and via this identification we get an induced form on $G$.  We take $K_\gamma=(\gamma,?)$ for each $\gamma\in \G$, so that $(K_\gamma,K_\mu)=(\gamma,\mu)$.
\par

As our notation suggests, we identify $G$ with the collection of grouplike elements in $u_q(\g)$.  Since $G$ will be identified with a set of units in an algebra, we adopt a multiplicative notation $K_\alpha K_\beta=K_{\alpha+\beta}$.
\par

The following structure was introduced by Belavin and Drinfeld in~\cite{BD}.

\begin{definition}
A Belavin-Drinfeld triple (BD triple) on $\Gamma$ is a choice of two subsets $\Gamma_1,\Gamma_2\subset \Gamma$ and inner product preserving bijection $T:\Gamma_1\to \Gamma_2$ which satisfies the following nilpotence condition: for each $\alpha\in \Gamma_1$ there exists $n\geq 1$ with $T^n(\alpha)\in \Gamma-\Gamma_1$.
\end{definition}

We often take $T\alpha=T(\alpha)$.  Having fixed some BD triple $(\Gamma_1,\Gamma_2,T)$ we can define a number of subgroups in $\G$ and $G$.  Via the sequence $\Gamma\to \h^\ast_{\mathbb{Z}}\to \G$ we identify $\Gamma$ with a basis for $\G$.  We take
\[
\L=\left(\mathbb{Z}/l\mathbb{Z}\cdot \{\alpha-T \alpha:\alpha\in\Gamma_1\}\right)^{\perp},
\]
where the perp is calculated with respect to the form, and
\[
L=\left(\mathbb{Z}/l\mathbb{Z}\cdot \{K_\alpha K^{-1}_{T\alpha}:\alpha\in\Gamma_1\}\right)^{\perp}.
\]
Under the identification $\G\to G$ the subgroups $\L$ and $L$ are identified.  We take also $\G_i=\mathbb{Z}/l\mathbb{Z}\cdot \Gamma_i$ and the $G_i=\mathbb{Z}/l\mathbb{Z}\cdot \{K_\alpha:\alpha\in\Gamma_i\}$.
\par

We assume that $l$ is such that restrictions of the form to $\mathbb{Z}/l\mathbb{Z}\cdot\{\alpha-T\alpha:\alpha\in\Gamma_1\}$ and $\G_i$ are non-degenerate.  To find such an $l$ one simply considers the determinants of the (integer) matrices $[(\alpha-T\alpha,\beta-T\beta)]_{\alpha,\beta\in\Gamma_1}$ and $[(\alpha,\beta)]_{\alpha,\beta\in \Gamma_1}$ and chooses $l$ coprime to these determinants.  This will give $\L^\perp=\mathbb{Z}/l\mathbb{Z}\cdot\{\alpha-T\alpha:\alpha\in\Gamma_1\}$ and split $\G$ and $G$ as $\G=\L\times \L^\perp=\G_i\times\G_i^\perp$, $G=L\times L^\perp=G_i\times G_i^\perp$.  We also assume $l$ is such that the restriction of the form to $\G_i\times\L^\perp$ is non-degenerate, which we can do by~\cite[Lem 3.1]{ESS}, and which can be checked by considering the determinant of the corresponding matrix.  The following lemma was covered in~\cite[Sect. 5.2]{EN01} (see also~\cite[Cor 3.2]{ESS}).

\begin{lemma}\label{lem:GT}
Under the above assumptions on $l$, there are splittings $\G=\G_1\times \L$ and $\G=\G_2\times \L$, and a unique extension of $T:\Gamma_1\to \Gamma_2$ to a group automorphism $T:\G\to \G$ with $T|\L=id_{\L}$.  This automorphism preserves the form on $\G$.
\end{lemma}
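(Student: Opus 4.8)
The plan is to push the whole statement onto the nondegeneracy hypotheses already placed on $l$ in Section~\ref{sect:bdt}, and then to finish by counting inside the finite abelian group $\G$. Recall that those hypotheses already give the splittings $\G=\L\times\L^\perp=\G_i\times\G_i^\perp$ with $\L^\perp=\mathbb{Z}/l\mathbb{Z}\cdot\{\alpha-T\alpha:\alpha\in\Gamma_1\}$. The first thing I would record is that they also force $\{\alpha-T\alpha:\alpha\in\Gamma_1\}$, $\Gamma_1$, and $\Gamma_2$ to be free $\mathbb{Z}/l\mathbb{Z}$-bases of $\L^\perp$, $\G_1$, and $\G_2$ respectively: a $\mathbb{Z}/l\mathbb{Z}$-linear relation among the members of one of these sets, paired against the members of the same set, is annihilated by a Gram or pairing matrix whose determinant was chosen to be a unit mod $l$, so the relation is trivial. (Here $\Gamma_1$ and $\Gamma_2$ share a Gram matrix because $T$ preserves inner products, so the one determinant condition covers both.) Consequently $|\L^\perp|=|\G_1|=|\G_2|=l^{|\Gamma_1|}$ and, from $\G=\L\times\L^\perp$, $|\L|=l^{|\Gamma|-|\Gamma_1|}$.

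For the two claimed splittings I would, for $i=1,2$, check $\G_i\cap\L=0$. This is exactly the assumed nondegeneracy of the form on $\G_i\times\L^\perp$: an element of $\G_i$ lying in $\L=(\L^\perp)^\perp$ pairs to $0$ with every element of $\L^\perp$, hence is $0$. Then $|\G_i+\L|=|\G_i|\,|\L|=l^{|\Gamma|}=|\G|$, so $\G=\G_i+\L$ and the sum is direct, i.e. $\G=\G_i\times\L$ for $i=1,2$.

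Using $\G=\G_1\times\L$, I would define $T\colon\G\to\G$ to be the given bijection on the basis $\Gamma_1$ of $\G_1$, extended $\mathbb{Z}/l\mathbb{Z}$-linearly, together with the identity on $\L$; this is a well-defined homomorphism precisely because $\G$ is the internal direct product of $\G_1$ and $\L$. Its image contains $\mathbb{Z}/l\mathbb{Z}\cdot T(\Gamma_1)=\G_2$ and all of $\L$, hence equals $\G_2+\L=\G$, so $T$ is a surjective endomorphism of a finite group, hence an automorphism. Uniqueness is immediate: any extension with $T|\L=\mathrm{id}_\L$ is forced on $\Gamma_1$, hence on $\G_1$ by additivity, hence on all of $\G=\G_1\times\L$. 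Finally, to see $T$ preserves the form I would verify $(Tx,Ty)=(x,y)$ on pairs with each of $x,y$ in $\G_1$ or $\L$: on $\G_1\times\G_1$ it is the inner-product-preservation of $T\colon\Gamma_1\to\Gamma_2$; on $\L\times\L$ it is trivial; and on a mixed pair $(\alpha,y)$ with $\alpha\in\Gamma_1$, $y\in\L$, the identity $(T\alpha,y)=(\alpha,y)$ is the same as $(\alpha-T\alpha,y)=0$, which holds since $\alpha-T\alpha\in\L^\perp$ and $y\in\L=(\L^\perp)^\perp$.

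I do not expect a genuine obstacle here: the argument is essentially bookkeeping. The one point to be attentive to is that $\mathbb{Z}/l\mathbb{Z}$ is not a field, so one must justify freeness and compute cardinalities of the submodules $\G_1$, $\G_2$, $\L^\perp$, $\L$ carefully (via invertibility of the Gram/pairing matrices) in order for the counting arguments above to be legitimate. It is also worth noting that the nilpotence condition in the definition of a BD triple plays no role in this lemma.
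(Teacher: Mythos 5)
Your proof is correct and follows essentially the same route as the paper: establish $\G = \G_i \times \L$ by showing $\G_i \cap \L = 0$ and then counting, extend $T$ via the splitting $\G = \G_1 \times \L$, use the splitting $\G = \G_2 \times \L$ to get surjectivity, and check form-preservation component-wise. The only notable difference is that you derive $\G_i \cap \L = 0$ for both $i=1,2$ uniformly from nondegeneracy of the pairing on $\G_i \times \L^\perp$, whereas the paper handles $\G_2$ by observing that its pairing matrix against $\L^\perp$ is the negated transpose of the $\G_1$ matrix; your version is a touch cleaner, and you are also more explicit about the freeness/cardinality bookkeeping needed because $\mathbb{Z}/l\mathbb{Z}$ is not a field.
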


\begin{comment}
\begin{proof}
The fact that $\G_1\cap \L=0$ implies that the subgroup generated by $\G_1$ and $\L$ in $\G$ is isomorphic to $\G_1\oplus \L$.  Since
\[
|\G_1\oplus \L|=|\L^\perp\oplus \L|=|\G|
\]
the inclusion $\G_1\oplus \L\to \G$ is an equality.  To see that the analogous splitting holds for $\G_2$ we note that the matrix giving the form $\G_2\times\L^\perp\to \mathbb{Z}/l\mathbb{Z}$ is the negated transpose of~\eqref{eq:117},
\[
\left[\left(T\alpha,\beta-T\beta\right)\right]_{\alpha,\beta\in \Gamma_1}=-\left[\left(\alpha-T\alpha,\beta\right)\right]_{\alpha,\beta\in \Gamma_1},
\]
since $T$ preserves the form.
\par

From the splitting $\G=\G_1\oplus\L$ we may extend the map $\G_1\to \G_2$, $\alpha\mapsto T(\alpha)$, to an endomorphism of $\G$, which we also denote by $T$, by taking $T|\L=id_{\L}$.  The second splitting $\G=\G_2\oplus \L$ tells us that this extension is an automorphism of $\G$.  To verify that this extension of $T$ respects the inner product one writes elements down according to the decomposition $\G=\G_1\oplus \L$ then uses the fact that $\L$ is perpendicular to the differences $\alpha-T(\alpha)$ to see $(T\gamma, T\nu)=(\gamma,\nu)$ for each $\gamma,\nu\in \G$.
\end{proof}
\end{comment}

We will denote this extension of $T$ to an automorphism on $\G$ simply by $T$.  By a further abuse of notation we let $T$ also denote the induced automorphism on the dual.  That is, $T:G\to G$ is the map $K_\alpha\mapsto K_{T(\alpha)}$.  Preservation of the form means specifically $(T\mu,T\nu)=(\mu,\nu)$ for each $\mu,\nu\in \G$ and $(T\ox T)(\Omega)=\Omega$.
\par

Throughout this work we make copious use of the dualities
\[
G\leftrightsquigarrow \G,\ \ L \leftrightsquigarrow \L,\ \ G_i \leftrightsquigarrow \G_i,\ \ L^\perp \leftrightsquigarrow \L^\perp,\ \ G_i^\perp \leftrightsquigarrow \G_i^\perp.
\]
By this we mean both that the duality functor $(?)^\vee$ sends the group on the left to the group on the right, and vice-versa, and that for any $K_\mu$ in the group on the left the function $(K_\mu,?)$ will be an element in the corresponding group on the right, and vice-versa.

\section{Twists and $R$-matrices}
\label{sect:twinfo}

A (Drinfeld) twist of a Hopf algebra $H$ is a unit $J\in H\ox H$ which satisfies the dual cocycle condition
\[
(\Delta\ox 1)(J)(J\ox 1)=(1\ox \Delta)(J)(1\ox J)
\]
and $(\epsilon\ox 1)(J)=(1\ox\epsilon)(J)=1$.  From such a $J$ we can define a new Hopf algebra $H^J$ which is equal to $H$ as an algebra and has the new comultiplication
\[
\Delta^J(h)=J^{-1}\Delta(h)J.
\]
The antipode on $H^J$ is given by
\[
S_J(h)=Q_J^{-1}S(h)Q_J,
\]
where $Q_J=m((S\ox 1)(J))$ and $Q_J^{-1}=m((1\ox S)(J^{-1}))$ and $m$ is multiplication.  (See e.g.~\cite{EGNO,radford11}.)
\par

Recall that a quasitriangular Hopf algebra is a Hopf algebra $H$ with a unit $R\in H\ox H$ satisfying $R\Delta(h)R^{-1}=\Delta^{op}(h)$ for all $h\in H$, as well as the relations $(\Delta\ox 1)(R)=R_{13}R_{23}$ and $(1\ox\Delta)(R)=R_{13}R_{12}$.  We have the additional relations
\[
(\epsilon\ox 1)(R)=(1\ox\epsilon)(R)=1,\ \ (S\ox 1)(R)=(1\ox S^{-1})(R)=R^{-1}
\]
and $R_{12}R_{13}R_{23}=R_{23}R_{13}R_{12}$~\cite{drinfeld89}.  When $H$ is quasitriangular with $R$-matrix $R$, the twist $H^J$ will naturally be quasitriangular with new $R$-matrix
\[
R^J=J_{21}^{-1}RJ.
\]

\subsection{Bicharacters and twists on group rings}

Let $\Lambda$ be a finite abelian group.  We call an element $B\in \C[\Lambda]\ox\C[\Lambda]=\C[(\Lambda^\vee\times\Lambda^\vee)^\vee]$ a (symmetric, antisymmetric, etc.) bicharcter if its restriction $B:\Lambda^\vee\times\Lambda^\vee \to \C^\times$ is a (symmetric, antisymmetric, etc.) bicharacter.  An easy direct check verifies

\begin{lemma}
Any bicharacter $B\in \C[\Lambda]\ox\C[\Lambda]$ is a twist for $\C[\Lambda]$.
\end{lemma}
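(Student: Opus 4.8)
The plan is to verify the two defining properties of a twist directly, using the fact that $B$ is a bicharacter and that $\C[\Lambda]$ is cocommutative with grouplike basis $\Lambda^\vee$. First I would write $B = \sum_{\chi,\psi} B(\chi,\psi)\, e_\chi \ox e_\psi$ in terms of a dual-group description, or equivalently observe that a bicharacter $B:\Lambda^\vee\times\Lambda^\vee\to\C^\times$ corresponds to an element of $\C[\Lambda]\ox\C[\Lambda]$ that can be evaluated on pairs of characters; the cleanest bookkeeping is to say that since $B$ is a character of $\Lambda^\vee\times\Lambda^\vee$, it is a grouplike element of $\C[\Lambda]\ox\C[\Lambda]=\C[(\Lambda^\vee\times\Lambda^\vee)^\vee]$, hence in particular a unit, so invertibility is immediate. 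The counit normalization $(\epsilon\ox 1)(B)=(1\ox\epsilon)(B)=1$ then follows because $\epsilon$ corresponds to evaluation at the trivial character, and $B(1,\psi)=B(\chi,1)=1$ by bicharacter-ness.

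The substantive point is the dual cocycle identity $(\Delta\ox 1)(B)(B\ox 1)=(1\ox\Delta)(B)(1\ox B)$. Since $\C[\Lambda]$ is cocommutative and $B$ is grouplike in $\C[\Lambda]\ox\C[\Lambda]$, I would apply $\Delta$ in the first (resp. second) tensor slot and use that $\Delta$ on a grouplike is diagonal; concretely, if one thinks of $B$ as built from grouplikes $g\in\Lambda$, the diagonality of $\Delta$ reduces both sides to an identity of scalars of the form $B(\chi_1+\chi_2,\psi)B(\chi_1,\psi')$-type expressions versus $B(\chi_1,\psi)B(\chi_2,\psi)B(\chi_1,\psi)$-type — no, more carefully, after pairing against characters $\chi_1\ox\chi_2\ox\psi$ both sides become a product of values $B(\chi_i,\psi)$ and the identity collapses to the multiplicativity of $B$ in its first argument, $B(\chi_1+\chi_2,\psi)=B(\chi_1,\psi)B(\chi_2,\psi)$. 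So the cocycle condition is exactly the statement that $B$ is a bicharacter (one of the two linearity properties), and the $(\epsilon\ox 1)$-normalization is the "trivial character" case of the other.

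The only mild obstacle is notational: setting up a description of elements of $\C[\Lambda]\ox\C[\Lambda]$ under which "bicharacter" and "grouplike" are visibly compatible, and under which $\Delta$, $\epsilon$ act transparently. Once one fixes the identification $\C[\Lambda]\ox\C[\Lambda]\cong\C[(\Lambda^\vee\times\Lambda^\vee)^\vee]$ promised in the statement, so that $B$ being a bicharacter literally means $B$ is (the element corresponding to) a homomorphism $\Lambda^\vee\times\Lambda^\vee\to\C^\times$, everything is an "easy direct check" as the paper says; I would simply record the two computations above and be done. No step requires anything beyond the definitions of twist, bicharacter, and the coalgebra structure of a group ring.
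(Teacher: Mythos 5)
Your proposal is correct in strategy and matches the ``easy direct check'' the paper alludes to: identify $B$ as a grouplike element of $\C[(\Lambda^\vee\times\Lambda^\vee)^\vee]$ (hence a unit), get the counit normalization from $B(1,\psi)=B(\chi,1)=1$, and verify the cocycle identity by pairing both sides against $\chi_1\ox\chi_2\ox\psi\in\Lambda^\vee\times\Lambda^\vee\times\Lambda^\vee$.

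One small inaccuracy in the final reduction: pairing the cocycle against $\chi_1\ox\chi_2\ox\psi$ gives
\[
B(\chi_1\chi_2,\psi)\,B(\chi_1,\chi_2)\ \overset{?}{=}\ B(\chi_1,\chi_2\psi)\,B(\chi_2,\psi),
\]
so both sides carry an extra factor $B(\chi_1,\chi_2)$, and the identity does not literally reduce to multiplicativity in the first argument alone. Rather, using bimultiplicativity in \emph{both} slots, each side expands to the common quantity $B(\chi_1,\chi_2)B(\chi_1,\psi)B(\chi_2,\psi)$, and equality holds because $\C^\times$ is abelian. This is a cosmetic slip; the argument as a whole is sound and is the proof the paper has in mind.
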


Indeed, up to so-called gauge equivalence, every twist of an abelian group ring is given by an antisymmetric bicharacter (see e.g.~\cite{guillotkasselmasuoka12}).

\section{Twists from Belavin-Drinfeld triples}
\label{sect:JBD}

For the remainder of this study we fix $\g$ a simply laced simple Lie algebra with root system $\Phi$ and a choice of simple roots $\Gamma$.  We take $l$ as in Section~\ref{sect:bdt} and $u_q=u_q(\g)$.
\par

Let $(\Gamma_1,\Gamma_2,T)$ be a BD triple.  Following~\cite{ESS,EN01}, we extend the group maps $T^{\pm 1}:G\to G$ constructed in Lemma~\ref{lem:GT} to Hopf endomorphisms of the quantum Borels $T_{\pm}:u_{\pm}\to u_{\pm}$ defined by
\[
T_{+}(E_\alpha)=\left\{\begin{array}{ll}
E_{T\alpha} & \mathrm{when\ }\alpha\in\Gamma_1\\
0 & \mathrm{when\ }\alpha\in\Gamma-\Gamma_1,\end{array}\right.
\
T_{-}(F_\beta)=\left\{\begin{array}{ll}
E_{T^{-1}\beta} & \mathrm{when\ }\beta\in\Gamma_2\\
0 & \mathrm{when\ }\beta\in\Gamma-\Gamma_2.\end{array}\right.
\]
There will be a unique minimal positive integer $n$ such that $T_{\pm}^n|I_{\pm}=0$, where $I_{\pm}$ is the ideal in $u_{\pm}$ generated by all the $E_{\alpha}$, or $F_\alpha$.  We call this integer the {\it nilpotence degree} of $T_{\pm}$.
\par

We will be interested in antisymmetric bicharacters $\S$ in $\C[G]\ox \C[G]$ solving the following equation:
\begin{equation}\label{eq:seq}
\S^2(\alpha-T\alpha,?)=\Omega(\alpha+T\alpha,?)\ \ \forall\ \alpha\in\Gamma_1.\tag{EQ--S}
\end{equation}
We verify below that such solutions always exist, and that there are exactly $|L\wedge_{\mathbb{Z}}L|$ of them, which is expected from~\cite{BD,ESS}.
\par

This section is dedicated to a proof of the following theorem.

\begin{theorem}\label{thm:twists}
Consider any Belavin-Drinfeld triple $(\Gamma_1,\Gamma_2,T)$ and solution $\S$ to~\eqref{eq:seq}.  The element
\[
J_{T,\S}=(T_+\ox 1)(R)\dots (T^n_+\ox 1)(R)\S^{-1}\Omega_{L^\perp}^{-1/2}(T^n\ox 1)(\Omega)^{-1}\dots(T\ox 1)(\Omega)^{-1}
\]
is a twist for the small quantum group $u_q(\g)$, where $n$ is the nilpotence degree of $T_+$.
\end{theorem}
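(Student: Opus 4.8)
The plan is to verify the dual cocycle identity $(\Delta\ox 1)(J)(J\ox 1)=(1\ox\Delta)(J)(1\ox J)$ directly, by exploiting the factorization of $J=J_{T,\S}$ into three recognizable pieces: the "$R$-part" $\mathcal{R}:=(T_+\ox 1)(R)\cdots(T_+^n\ox 1)(R)$, the bicharacter part $\S^{-1}\Omega_{L^\perp}^{-1/2}$, and the "$\Omega$-part" $\mathcal{U}:=(T^n\ox 1)(\Omega)^{-1}\cdots(T\ox 1)(\Omega)^{-1}$. The counit normalization $(\epsilon\ox 1)(J)=(1\ox\epsilon)(J)=1$ is immediate since each factor has trivial counit in either leg. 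For the cocycle condition itself, the strategy is the one of~\cite{ESS,ABRR}: reduce to the quantum Borel $u_+$ (all three factors lie in $u_+\ox u_+$, using that $\Omega\in\C[G]\ox\C[G]$ and the bicharacters live in $\C[G]\ox\C[G]$), and then use the hexagon/quasitriangularity axioms for $R$ together with the multiplicativity of $T_+$ and $T$.

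First I would record the key compatibilities. Since $T_+:u_+\to u_+$ is a Hopf endomorphism, $(T_+^j\ox T_+^j)(R)$ satisfies the same hexagon relations $(\Delta\ox 1)((T_+^j\ox 1)(R))=\big((T_+^j)_{13}\big)R_{13}\cdot\text{(stuff)}$ — more precisely, applying $T_+^j\ox 1$ (or $T_+^j\ox T_+^j$) to $(\Delta\ox 1)(R)=R_{13}R_{23}$ and $(1\ox\Delta)(R)=R_{13}R_{12}$ and using that $T_+$ commutes with $\Delta$ gives telescoping relations among the factors $(T_+^j\ox 1)(R)$. The element $\Omega$ is a symmetric bicharacter, so $(\Delta\ox 1)(\Omega)=\Omega_{13}\Omega_{23}$, and the same holds for each $(T^j\ox 1)(\Omega)$ since $T$ is a group automorphism preserving the form; likewise $\Omega_{L^\perp}^{1/2}$ is a bicharacter and $\S$ is an antisymmetric bicharacter, so each is automatically a twist of $\C[G]$ by the Lemma in Section~\ref{sect:twinfo}. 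The crux is then a bookkeeping computation: expanding both sides of the cocycle identity, one commutes the bicharacter/$\Omega$ factors past the $R$-factors (this produces only scalar $q$-powers, controlled by the form and by the defining property~\eqref{eq:seq}), and then matches the remaining $R$-products using the hexagon relations. The defining equation~\eqref{eq:seq}, $\S^2(\alpha-T\alpha,?)=\Omega(\alpha+T\alpha,?)$, is precisely what is needed for the "seam" terms $E_{T^j\alpha}\ox E_{\alpha}$-type contributions coming from successive $(T_+^j\ox 1)(R)$ to cancel against the $\Omega$- and $\S$-factors; this is the non-dynamical shadow of the ABRR-type fusion argument and of~\cite[Cor. 6.1]{ESS}.

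Concretely the steps are: (1) reduce to $u_+$ and dispense with the counit conditions; (2) prove the telescoping hexagon relations for $\mathcal{R}$, namely an identity expressing $(\Delta\ox 1)(\mathcal{R})$ in terms of $\mathcal{R}_{13}$, $\mathcal{R}_{23}$, and a correction supported on the grading pieces killed after $n$ applications of $T_+$ (here the nilpotence degree $n$ enters: $T_+^{n}$ vanishes on the augmentation ideal, so the product $\mathcal{R}$ is finite and the would-be $(n+1)$st term is trivial); (3) compute $(\Delta\ox1)$ and $(1\ox\Delta)$ of the $\Omega$-part $\mathcal{U}$ using bicharacter multiplicativity, and of the middle bicharacter $\S^{-1}\Omega_{L^\perp}^{-1/2}$; (4) assemble, commuting grouplike factors past root-vector factors at the cost of explicit powers of $q$ dictated by the form on $\G$; (5) invoke~\eqref{eq:seq} to see that all scalar discrepancies cancel and the two sides agree. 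I expect step (2) together with step (5) — i.e. tracking the "seam" between consecutive $R$-factors and confirming that~\eqref{eq:seq} is exactly the needed cancellation — to be the main obstacle; the rest is careful but routine manipulation of bicharacters and $q$-powers, and one may streamline it by first checking the cocycle identity after applying the algebra map $u_+\to u_+/I_+^{[2]}$ onto a suitable "first-order" quotient and then bootstrapping, or simply by mirroring the computation in~\cite[Sect. 5.2]{EN01} with the dynamical parameter frozen.
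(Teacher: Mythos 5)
Your proposal sets up the right factorization of $J_{T,\S}$ and correctly identifies the ingredients (Hopf-ness of $T_\pm$, bicharacter multiplicativity, the role of~\eqref{eq:seq}, the role of the nilpotence degree $n$ in making the product finite), but there is a genuine gap: steps (2) and (5) — the ``telescoping hexagon relations'' for $\mathcal{R}=(T_+\ox 1)(R)\cdots(T_+^n\ox 1)(R)$ and the claim that~\eqref{eq:seq} cancels the resulting seam terms — are exactly the substantive content, and you have asserted rather than derived them. The hexagon axioms give $(\Delta\ox 1)(R)=R_{13}R_{23}$ for a \emph{single} $R$, but there is no simple analogue for a product of mutually non-central factors $(T_+^j\ox 1)(R)$: the $T_+^j$ are distinct, non-injective algebra maps which do not intertwine with $\Delta$ in the naive way, and commuting $(T_+^j\ox 1)(R)_{23}$ past $(T_+^{j'}\ox 1)(R)_{13}$ produces non-grouplike cross terms that a bicharacter bookkeeping cannot absorb. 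Your fallback of working modulo a filtration ``and then bootstrapping'' is really a disguised uniqueness claim — and nothing in the proposal supplies it. Also note a factual slip: you assert that all three factors lie in $u_+\ox u_+$, but $R\in u_+\ox u_-$ and hence $J_{T,\S}\in u_+\ox u_-$; the reduction you describe is therefore not available as stated.

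The paper takes a materially different route precisely to avoid the direct expansion you attempt: it defines discrete ($2$- and $3$-component) ABRR operators $A_L^2, A_R^2, A_L^3, A_R^3$, shows that a solution to a ``mixed'' fixed-point equation $A_L^3 A_R^3(X)=X$ is determined by its component in $\C[G]\ox u_q\ox\C[G]$, shows that both $J_{12,3}J_{12}$ and $J_{1,23}J_{23}$ satisfy this equation, computes that their $(0,0)$-components agree (this is where the explicit bicharacter manipulation and~\eqref{eq:seq} actually enter, via Lemmas~\ref{lem:355} and~\ref{lem:516}), and concludes the cocycle identity from uniqueness. The fixed-point/uniqueness mechanism is the piece your proposal is missing: it replaces the intractable head-on comparison of two long products of non-commuting factors with a check of a much smaller initial condition. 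You would need to either supply a uniqueness statement of this type or actually carry out the step (2)/(5) computation in full; as written, the proposal does neither.
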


This result is a non-dynamical version of~\cite[Sect. 5.2]{EN01}, and a discrete version of~\cite[Thm. 6.1]{ESS}.  To clarify our previous point, we have

\begin{lemma}\label{lem:solex}
Antisymmetric bicharacter solutions $\S$ to equation~\eqref{eq:seq} always exist, and there are exactly $|L\wedge_{\mathbb{Z}}L|$ such solutions.
\end{lemma}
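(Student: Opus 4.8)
The plan is to reduce equation~\eqref{eq:seq} to an equivalent linear problem over $\mathbb{Z}/l\mathbb{Z}$ and then count. Since $\S$ is to be an antisymmetric bicharacter on $G$, writing $\S$ additively via the identification $\C[G]\ox\C[G] = \C[(\G\times\G)^\vee]$ means $\S$ corresponds to an antisymmetric bilinear form $s:\G\times\G\to\mathbb{Z}/l\mathbb{Z}$; likewise $\Omega$ corresponds to the symmetric form $(?,?)$. The equation~\eqref{eq:seq} says that for every $\alpha\in\Gamma_1$ and every $\gamma\in\G$ (equivalently, testing against all $K_\gamma\in G$),
\[
2\, s(\alpha - T\alpha,\gamma) = (\alpha + T\alpha,\gamma).
\]
Because $l$ is odd, $2$ is invertible mod $l$, so this rewrites as $s(\alpha-T\alpha,\gamma) = \tfrac12(\alpha+T\alpha,\gamma)$ for all $\alpha\in\Gamma_1$, $\gamma\in\G$. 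First I would verify that a \emph{symmetric} form solving this exists — the natural candidate is $s_0 = \tfrac12(?,?)$ composed appropriately, or rather: since $T$ preserves the form, $(\alpha-T\alpha,\gamma)$ and $(\alpha+T\alpha,\gamma)$ are related, and one checks directly that $s_0(x,y):=\tfrac12((x,y) - (Tx, T^{-1}y))$ or a similar symmetrized-then-antisymmetrized expression does the job; I'd pin down the exact closed form by the same computation as in~\cite[Sect. 5.2]{EN01} or~\cite[Cor 3.2]{ESS}.

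\textbf{Counting the solutions.} Once one solution $\S_0$ exists, the full solution set is the coset $\S_0 \cdot \{\text{homogeneous solutions}\}$, where a homogeneous solution is an antisymmetric bicharacter $\T$ on $G$ with $\T(\alpha-T\alpha,?) = 1$ for all $\alpha\in\Gamma_1$, i.e.\ an antisymmetric bilinear form on $\G$ that vanishes on $\L^\perp\times\G$ (recall $\L^\perp = \mathbb{Z}/l\mathbb{Z}\cdot\{\alpha-T\alpha:\alpha\in\Gamma_1\}$ under our assumptions on $l$). By antisymmetry such a $\T$ also vanishes on $\G\times\L^\perp$, so $\T$ factors through $\G/\L^\perp \times \G/\L^\perp$; using the splitting $\G = \L\times\L^\perp$ from Section~\ref{sect:bdt}, $\G/\L^\perp\cong\L$, so homogeneous solutions are exactly antisymmetric bilinear forms on $\L$ with values in $\mathbb{Z}/l\mathbb{Z}$. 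The group of such forms is $\L\wedge_{\mathbb{Z}}\L$ (an alternating form on a finite abelian $\mathbb{Z}/l\mathbb{Z}$-module is the same as an element of its exterior square, since $l$ odd kills the distinction between alternating and antisymmetric), which under the duality $L\leftrightsquigarrow\L$ has the same order as $L\wedge_{\mathbb{Z}}L$. Hence $|\{\S\}| = |L\wedge_{\mathbb{Z}}L|$, as claimed. I would need to be slightly careful about the (non)distinction between ``antisymmetric'' (i.e.\ $\S_{21} = \S^{-1}$, equivalently $s(y,x) = -s(x,y)$) and ``alternating'' ($s(x,x)=0$) bicharacters; since $l$ is odd these coincide, so $\S \mapsto \S_{21}^{-1}\S$ or rescaling arguments present no trouble.

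\textbf{Main obstacle.} The genuinely substantive point is the \emph{existence} half — producing an honest antisymmetric bicharacter $\S_0$ on all of $G$, not merely a bilinear form defined on the subspace spanned by the $\alpha-T\alpha$. The subtlety is that $\tfrac12(\alpha+T\alpha,?)$ is only prescribed as a function of the differences $\alpha - T\alpha$, and one must check that this prescription is (i) well-defined — i.e.\ if $\sum c_\alpha(\alpha-T\alpha) = 0$ in $\G$ then $\sum c_\alpha(\alpha+T\alpha,?) = 0$ too — and (ii) extendable to an antisymmetric (not just arbitrary) form on $\G$. Point (i) follows from $T$ preserving the form: $\sum c_\alpha(\alpha-T\alpha)=0$ forces, after pairing and using $(T\alpha,T\beta)=(\alpha,\beta)$, that the corresponding combination of $(\alpha+T\alpha,?)$ vanishes — this is exactly the ``negated transpose'' identity $[(T\alpha,\beta-T\beta)] = -[(\alpha-T\alpha,\beta)]$ recorded after Lemma~\ref{lem:GT}. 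For (ii), the non-degeneracy assumption on the form restricted to $\L^\perp\times\L^\perp$ (equivalently to $\mathbb{Z}/l\mathbb{Z}\cdot\{\alpha-T\alpha\}$) lets me solve for the values of $\S_0$ on $\L^\perp\times\L^\perp$, then extend by zero (or by any choice) on the complementary $\L\times\L$ block and by the required anti-symmetrization on the mixed blocks; antisymmetry on the $\L^\perp\times\L^\perp$ block must be checked, and it holds precisely because $\tfrac12(\alpha+T\alpha,\beta-T\beta)$ is antisymmetric in $\alpha\leftrightarrow\beta$ by the same transpose identity. I would organize the extension using the direct sum decomposition $\G = \L\times\L^\perp$ to reduce everything to these three blocks, making the verification a short bilinear-algebra computation rather than anything deep.
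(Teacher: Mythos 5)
Your proposal takes essentially the same route as the paper: decompose $\G=\L\oplus\L^\perp$, observe that~\eqref{eq:seq} pins down the bicharacter uniquely on the $\L^\perp\times\G$ block (equivalently, on a complement of $L\wedge_{\mathbb Z}L$ in $G\wedge_{\mathbb Z}G$), check antisymmetry there, and then note the residual freedom is exactly an arbitrary alternating form on $\L$, i.e.\ an element of $L\wedge_{\mathbb Z}L$. The paper phrases this directly as a splitting $G\wedge_{\mathbb Z}G=(L^\perp\wedge_{\mathbb Z}G)\oplus(L\wedge_{\mathbb Z}L)$ with the equation specifying the $L^\perp\wedge_{\mathbb Z}G$ component, which is the same picture.

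One small inaccuracy worth flagging: your ``well-definedness'' concern (i) is actually vacuous under the paper's standing hypotheses. The non-degeneracy of the form on $\L^\perp=\mathbb{Z}/l\mathbb{Z}\cdot\{\alpha-T\alpha\}$ forces the Gram matrix $[(\alpha-T\alpha,\beta-T\beta)]$ to be invertible mod $l$, which in turn forces the $|\Gamma_1|$ elements $\alpha-T\alpha$ to be linearly independent over $\mathbb{Z}/l\mathbb Z$; so they are a basis of $\L^\perp$ and there are no relations $\sum c_\alpha(\alpha-T\alpha)=0$ to worry about. Your attempt to derive (i) from $T$-preservation of the form alone is not obviously valid and is unnecessary — non-degeneracy handles it. The antisymmetry check on the $\L^\perp\times\L^\perp$ block in part (ii) is correct and does use $(T\mu,T\nu)=(\mu,\nu)$, exactly as you say.
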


\begin{proof}
We decompose $G$ as $L\oplus L^\perp$ to get $G\wedge_{\mathbb{Z}}G=(L^\perp\wedge_{\mathbb{Z}}G)\oplus(L\wedge_{\mathbb{Z}}L)$.  Since
\[
L^\perp=(\L^\perp)^\vee=(\mathbb{Z}/l\mathbb{Z}\cdot \{\alpha-T\alpha:\alpha\in\Gamma_1\})^\vee
\]
we see that the equation~\eqref{eq:seq} specifies uniquely an element $\S_0$ in $L^\perp\wedge_{\mathbb{Z}}G$, which we extend to a bicharacter on $\G$ which vanishes on $\L\times \L$.  Whence we have found a solution to~\eqref{eq:seq}.  We can add arbitrary elements of $L\wedge_{\mathbb{Z}}L$ to arrive at the set of all solutions $\S_0+L\wedge_{\mathbb{Z}}L$.
\end{proof}

One should note that when $\mathrm{rank}(L)=|\Gamma|-1$, the solution $\S$ will be unique.  Using our nilpotence assumption on $T$ one sees that, up to an automorphism of the Dynkin diagram, this occurs only in type $A$ for the triple
\[
\Gamma=A_n,\ \Gamma=\{\mathrm{first\ }n-1\mathrm{\ roots}\},\ \Gamma_2=\{\mathrm{last\ }n-1\mathrm{\ roots}\},\ T(\alpha_i)=\alpha_{i+1}.
\]
We will call this the maximal triple on $A_n$.
\par

We will need the following basic property of the $R$-matrix.

\begin{lemma}
The $R$-matrix for $u_q(\g)$ satisfies $(T_+\ox 1)(R)=(1\ox T_-)(R)$.
\end{lemma}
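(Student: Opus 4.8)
The plan is to compare the two sides factor-by-factor using the explicit product formula for $R$. Recall that
\[
R=\prod_{\mu\in\Phi^+}\left(\sum_{n=0}^{l-1}q^{-n(n+1)/2}\frac{(1-q^2)^n}{[n]_q!} E^n_{\mu}\ox F^n_{\mu}\right)\Omega,
\]
so $(T_+\ox 1)(R)$ is obtained by applying $T_+$ to each $E_\mu^n$ factor (and to the left leg of $\Omega$), while $(1\ox T_-)(R)$ applies $T_-$ to each $F_\mu^n$ factor (and to the right leg of $\Omega$). Since $T$ preserves the form, $(T\ox 1)(\Omega)=(1\ox T^{-1})(\Omega)$, and applying $T_+\ox 1$ gives $(T\ox 1)(\Omega)$ while $1\ox T_-$ gives $(1\ox T)(\Omega)$; one must check these agree, i.e. $(T\ox 1)(\Omega)=(1\ox T)(\Omega)$. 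This follows from $(T\ox T)(\Omega)=\Omega$ together with $T$ being form-preserving, since $\Omega$ is the symmetric element representing the form. So the $\Omega$ part is routine.

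The main content is matching the quasi-$R$-matrix part. First I would show that it suffices to prove, for each $\alpha\in\Gamma$, the identity $T_+(E_\alpha)\ox 1$ and $1\ox T_-(F_\alpha)$ are ``compatible'' in the sense that applying $T_+$ to the $\alpha$-summand equals applying $T_-$ to it. Concretely: when $\alpha\in\Gamma_1$, $T_+(E_\alpha)=E_{T\alpha}$ and $T_-(F_\alpha)$ is \emph{not} directly $F_{T\alpha}$ — rather $T_-$ is defined on $\Gamma_2$ by $F_\beta\mapsto F_{T^{-1}\beta}$, so the relevant statement is that $T_-(F_{T\alpha})=F_\alpha$, i.e. the summand indexed by $\mu$ maps under $T_+\ox 1$ to the summand indexed by $T\mu$, which under $1\ox T_-$ maps back. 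The cleanest formulation: both $(T_+\ox 1)$ and $(1\ox T_-)$, applied to $R$, send the $\mu$-factor to the $T\mu$-factor when $\mu\in\Phi^+\cap\G_1$ (with $T\mu\in\Phi^+$), and kill it otherwise. I would prove this at the level of the quantum Borel by establishing that $B_{T\alpha}T_+=T_+B_\alpha$ (intertwining of $T_+$ with the relevant braid operators) so that $T_+(E_\mu)=E_{T\mu}$ for $\mu$ in the span of $\Gamma_1$ lying in $\Phi^+$, and the analogous statement $T_-(F_\mu)=F_{T^{-1}\mu}$.

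Alternatively — and this is probably the slicker route — I would invoke the general principle that $(1\ox S)(R)=R^{-1}=(S\ox 1)(R)$ combined with the fact that $T_+$ and $T_-$ are related by the Hopf-algebra antipode: since $T_-$ on the negative Borel corresponds to $T_+$ on the positive Borel under the canonical (anti)pairing between $u_+$ and $u_-$ that the $R$-matrix implements, the identity $(T_+\ox 1)(R)=(1\ox T_-)(R)$ is forced. Making this precise requires identifying $R$ (modulo $\Omega$) as the canonical element of the pairing $u_+^{\geq}\ox u_-^{\geq}$ and noting $T_-$ is the adjoint of $T_+$ under this pairing. I would check that $T_+$ and $T_-$ are adjoint by verifying it on generators: $\langle T_+(E_\alpha),F_\beta\rangle = \langle E_\alpha, T_-(F_\beta)\rangle$, which reduces to $\langle E_{T\alpha},F_\beta\rangle=\langle E_\alpha,F_{T^{-1}\beta}\rangle$ for $\alpha\in\Gamma_1$, $\beta\in\Gamma_2$, and both sides are $\delta$-functions matching because $T$ is a bijection $\Gamma_1\to\Gamma_2$; the cases where one side involves a root outside $\Gamma_1$ or $\Gamma_2$ give $0=0$.

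I expect the main obstacle to be bookkeeping: one must be careful that the \emph{ordered} product over $\Phi^+$ behaves correctly, i.e. that $T$ (extended to $\G$) maps the ordered set $\Phi^+\cap\G_1$ into $\Phi^+$ preserving enough of the convex order that the factors can be reindexed without introducing correction terms — and that the factors indexed by roots \emph{not} in $\G_1$ are genuinely killed by $T_+$ on the left leg (which follows since such $E_\mu$ lies in the ideal $I_+$, and $n$ is chosen so $T_+^n|_{I_+}=0$, but here we only need one application to land in the augmentation-trivial part so that the corresponding summand collapses to its $n=0$ term $1\ox 1$). Reconciling ``$T_+$ kills $E_\mu$'' with ``$T_-$ kills $F_\mu$'' for the same set of $\mu\notin\G_1$ is the one place a short argument is needed: $\mu\in\G_1$ iff $T\mu$ makes sense as a positive root, and $F_\mu\notin\mathrm{span}\{F_\gamma:\gamma\in\Gamma_2\}$-generated-subalgebra-closure precisely when $\mu\notin\G_2$ — but note $\mu\in\G_1 \iff T\mu\in\G_2$, so the index sets match after the reindexing $\mu\leftrightarrow T\mu$. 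Once that symmetry is pinned down the rest is a direct comparison.
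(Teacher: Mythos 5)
Your second route is essentially the paper's argument, and I would drop the first one entirely: the issues you flag there (that you would need $T_+B_\alpha = B_{T\alpha}T_+$ so that $T_+(E_\mu)=E_{T\mu}$ on non-simple $\mu$, and that the $\Phi^+$ ordering restricts compatibly under $T$) are genuine technical gaps, and there is no need to engage them. The paper's proof is exactly the "pairing" idea you sketch at the end, made precise through Radford's framework. Write $t_W\colon u_+^\ast\to u_-$, $f\mapsto (f\ox 1)(W)$. By \cite[Prop.~2]{radford93}, $t_R$ is a Hopf morphism; since $T_\pm$ are Hopf maps, $t_W$ is an algebra morphism for both $W=(T_+\ox 1)(R)$ and $W=(1\ox T_-)(R)$. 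Because $u_+$ is coradically graded, $u_+^\ast$ is generated as an algebra in degrees $0$ and $-1$, so an algebra morphism $u_+^\ast\to u_-$ is determined by its restriction to those degrees, i.e.\ by the degree-$0$ and degree-$1$ homogeneous pieces of $W$. You then compute both pieces directly from the form of $R$: the degree-$0$ pieces are $(T\ox 1)(\Omega)$ and $(1\ox T^{-1})(\Omega)$, which agree by $T$-invariance of the form, and the degree-$1$ pieces are the sums over $\Gamma_1$, resp.\ $\Gamma_2$, of $E_{T\alpha}\ox F_\alpha$ (times the bicharacter factor), which agree after reindexing by $T$. This is the missing justification in your proposal for why "verifying adjointness on generators suffices" — it is not automatic for adjointness of arbitrary linear maps, but follows because both candidate adjoints correspond to algebra maps on a coradically graded Hopf algebra dual, hence are determined in low degree. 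Adding that one sentence via Radford's Prop.~2 and coradical grading is all you need to make route~2 airtight.
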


\begin{proof}
Any element $W\in u_+\ox u_-$ is uniquely specified by the corresponding function $W:u_+^\ast\ox u_-^\ast\to \C$ and subsequent map $t_W:u_+^\ast\to u_-$, $f\mapsto (f\ox 1)(W)$.  By~\cite[Prop. 2]{radford93} and the fact that $T_\pm$ is a Hopf map, we see that when $W=(T_+\ox 1)(R)$ or $(1\ox T_-)(R)$ the $t_W:u_+^\ast\to u_-$ are algebra morphisms.  Since $u_+$ is cordically graded, $u_+^\ast$ is generated in degrees $0$ and $-1$ as an algebra, with $(u_+^\ast)_0=\C[G]^\ast$ and $(u_+^\ast)_{-1}=(\sum_{\alpha}\C[G] E_\alpha)^\ast$, and we see that the $t_W$ are determined by the restrictions $t_W|(u_+^\ast)_0$ and $t_W|(u_+^\ast)_{-1}$.  These restrictions are in turn determined by the homogeneous pieces
\[
(T_+\ox 1)(R)_0=(T\ox 1)(\Omega),\ \ (1\ox T_-)(R)_0=(1\ox T^{-1})(\Omega)
\]
and
\[
(T_+\ox 1)(R)_1=(q^{-1}-q)(\sum_{\alpha\in \Gamma_1}E_{T\alpha}\ox F_\alpha)(T\ox 1)(\Omega),
\]
\[
(1\ox T_-)(R)_1=(q^{-1}-q)(\sum_{\beta\in \Gamma_2}E_{\beta}\ox F_{T^{-1}\beta})(1\ox T^{-1})(\Omega),
\]
where we grade $u_+\ox u_-$ by the degree on $u_+$.  By $T$-invariance of the form $\Omega$, it follows that $(T_+\ox 1)(R)_0=(1\ox T_-)(R)_0$ and $(T_+\ox 1)(R)_1=(1\ox T_-)(R)_1$.  Whence we have the proposed equality.
\end{proof}

\subsection{General outline}

In order to prove Theorem~\ref{thm:twists} we will basically repeat the arguments of~\cite{ESS,EN01}, and so only sketch some of the unoriginal details.
\par

Fix a data $(\Gamma_1,\Gamma_2,T)$.  Following the suggestions of~\cite[Remark 6.1]{ESS}, and the general approach of~\cite{ABRR}, we will show that $J$ is a twist by showing that both
\[
(\Delta\ox 1)(J)(J\ox 1)\ \ \mathrm{and} \ \ (1\ox\Delta)(J)(1\ox J)
\]
solve a certain ``mixed ABRR" equation.  Solutions to this equation with a specified ``initial condition" are shown to be unique, so that we will have
\[
(\Delta\ox 1)(J)(J\ox 1)=(1\ox\Delta)(J)(1\ox J).
\]
\par

\begin{remark} Our presentation is slightly more complicated than that of~\cite{ESS}.  This is a result of our choice to avoid the use of dynamical twists.
\end{remark}

\subsection{Discrete ABRR in $2$-components}

For a given solution $\S$ let $\Zeta$ denote the restriction of $\S$ to $\L\times \L^\perp$, $\Sigma$ denote the restriction to $\L^\perp\times \L$, and take
\[
Q=\Zeta\Sigma=\S|\left((\L\times \L^\perp)+(\L^{\perp}\times \L)\right).
\]
We view $\Zeta$, $\Sigma$, and $Q$ as bicharacters on $\G$ by letting them vanish on all other factors of $\G\times\G$.  We also let $\Omega_L$ denote the restriction $\Omega|(\L\times\L)$.

\begin{definition}
We define $A_L^2$ and $A_R^2$ the be the linear endomorphisms of $u_+\ox u_-$ defined by
\[
A_L^2(\xi)=(T_+\ox 1)(R\xi Q)Q^{-1}\Omega_L^{-1},\ \ A_L^2(\xi)=(1\ox T_- )(R\xi Q)Q^{-1}\Omega_L^{-1}.
\]
The left and right $2$-component ABRR equations are the equations $A_L^2(X)=X$ and $A_R^2(X)=X$ respectively.
\end{definition}

We note that $Q$ can be replaced with $\Sigma$ and $\Zeta$ in the expressions for $A^2_L$ and $A^2_R$ respectively.  These alternate expressions are preferable for some calculations.
\par

Since $R$ decomposes as a sum $R=\Omega+R_+$, where $R_+$ is in the nilpotent ideal $I_+\ox I_-$, we get a corresponding decomposition of $A_L^2$ as
\[
A_L^2(\xi)=(T_+\ox 1)(\Omega \xi Q)Q^{-1}\Omega^{-1}_L+(T_+\ox 1)(R_+\xi Q)Q^{-1}\Omega^{-1}_L.
\]
From this one finds that we can solve the left $2$-component ABRR equation provided we can solve to the equation $(T\ox 1)(\Omega X_0Q)Q^{-1}\Omega^{-1}_L=X_0$ in $\C[G]\ox \C[G]$.  The analogous stamentement holds for the right ABRR equation.  Whence we have the following discrete analog of~\cite[Cor. 4.1]{ESS}.

\begin{lemma}[{cf.~\cite{ESS}}]
Any solution $B\in \C[G]\ox \C[G]$ to the equation
\begin{equation}\label{eq:347}
(T\ox 1)(\Omega X_0Q)Q^{-1}\Omega_L^{-1}=X_0\ \ (\text{resp.}\ (1\ox T^{-1})(\Omega X_0Q)Q^{-1}\Omega_L^{-1}=X_0)
\end{equation}
extends uniquely to a solution $J\in B+I_+\ox I_-$ to the ABRR equation $A_L^2(X)=X$ (resp. $A_R^2(X)=X$).
\end{lemma}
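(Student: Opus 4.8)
The strategy is a fixed-point / successive-approximation argument on the root-lattice filtration of $u_+\otimes u_-$, exactly as in~\cite{ESS} and~\cite{EN01}, now made to work over the finite-dimensional algebra because of the nilpotence of $I_\pm$. First I would fix the BD data and the solution $\S$, and set $X_0=B$ a solution of~\eqref{eq:347}; the claim is that the left ABRR equation $A_L^2(X)=X$ has a \emph{unique} solution in the coset $B+I_+\otimes I_-$. Using the decomposition $R=\Omega+R_+$ with $R_+\in I_+\otimes I_-$ recorded just before the lemma, write $A_L^2=D_L+N_L$, where $D_L(\xi)=(T_+\otimes 1)(\Omega\,\xi\,Q)Q^{-1}\Omega_L^{-1}$ is the ``diagonal'' part and $N_L(\xi)=(T_+\otimes 1)(R_+\,\xi\,Q)Q^{-1}\Omega_L^{-1}$ is the ``nilpotent'' part. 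The equation $A_L^2(X)=X$ then reads $(\mathrm{id}-D_L)(X)=N_L(X)$.

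The key structural observation is that $N_L$ strictly raises the root-lattice degree on the $u_+$-factor: since $R_+\in I_+\otimes I_-$ is a sum of homogeneous pieces of positive $u_+$-degree, multiplying by $R_+$ on the left and applying the (degree-preserving, up to the automorphism $T$, but still degree-raising on $I_+$) maps $(T_+\otimes 1)$ and the invertible $\C[G]\otimes\C[G]$-conjugations sends the degree-$d$ part of $I_+\otimes u_-$ into degrees $>d$. Because $u_+$ is finite-dimensional with top degree $2\rho$, $N_L$ is nilpotent as an operator; moreover $D_L$ preserves each graded piece and restricts to the identity on the degree-$0$ part $\C[G]\otimes\C[G]$ (this is precisely what equation~\eqref{eq:347} encodes). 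Hence on the quotient $(I_+\otimes I_-)$, the operator $\mathrm{id}-D_L$ is invertible in every positive degree \emph{except possibly} where $D_L$ has eigenvalue $1$ — and here one checks, as in~\cite[Cor.~4.1]{ESS}, that $D_L$ acting on the degree-$\mu$ component of $\C[G]E_\mu\otimes u_-$ is multiplication by a scalar involving $q^{(\mu,\mu)+\cdots}$ which is $\neq 1$ for $\mu\neq 0$ by the choice of $l$ (the coprimality hypotheses on $l$ are exactly what make these scalars nontrivial). Therefore $\mathrm{id}-D_L$ is invertible on $I_+\otimes I_-$.

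Granting invertibility of $\mathrm{id}-D_L$ on $I_+\otimes I_-$, the equation becomes the fixed-point problem $Y=(\mathrm{id}-D_L)^{-1}N_L(B+Y)$ for the correction term $Y\in I_+\otimes I_-$. Since $N_L$ is degree-raising and $I_+\otimes I_-$ has a finite filtration, the operator $Y\mapsto (\mathrm{id}-D_L)^{-1}N_L(B+Y)$ is a contraction for the filtration (its $k$-fold iterate lands in degrees $\geq k+1$), so by the usual finite Neumann-series argument it has a unique fixed point, obtained explicitly as $Y=\sum_{k\geq 1}\big((\mathrm{id}-D_L)^{-1}N_L\big)^k(B)$ (a finite sum). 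This gives existence and uniqueness of $J\in B+I_+\otimes I_-$ solving $A_L^2(X)=X$; the right-handed statement is identical with $T_+$ replaced by $T_-$, $Q$ replaced appropriately (using the remark that $Q$ may be swapped for $\Sigma$, resp.\ $\Zeta$), and $T$ by $T^{-1}$.

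The main obstacle is the invertibility of $\mathrm{id}-D_L$ on $I_+\otimes I_-$, i.e.\ verifying that the scalar by which $D_L$ acts in positive $u_+$-degree is never $1$. This is where all the arithmetic hypotheses on $l$ from Section~\ref{sect:bdt} get used: one must compute the $\C[G]\otimes\C[G]$-action of $(T_+\otimes 1)(\Omega\,(-)\,Q)Q^{-1}\Omega_L^{-1}$ on the weight-$\mu$ piece, express the resulting eigenvalue as a power of $q$ determined by the form $(\,?,?\,)$ and the $Q$, $\Omega_L$ twists, and argue that the relevant exponent is nonzero mod $l$. Once that scalar computation is in hand — essentially the content of~\cite[Cor.~4.1]{ESS} transcribed to the root-of-unity setting — the rest is the formal Neumann-series bookkeeping on the (now finite) filtration, which I would only sketch.
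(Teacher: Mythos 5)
Your overall strategy---write $A_L^2=D_L+N_L$, isolate the degree-$0$ equation, then solve for the correction term in $I_+\ox I_-$ by a finite Neumann series---is structurally the right kind of argument, and it is in the spirit of what the paper intends. However, your identification of the ``main obstacle'' is based on a misconception that would break the proof if followed literally.

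You assert that $D_L$ preserves the root-lattice graded pieces of $I_+\ox u_-$ and acts on the degree-$\mu$ piece by a scalar $q^{(\mu,\mu)+\cdots}$, and that invertibility of $\mathrm{id}-D_L$ on $I_+\ox I_-$ comes down to showing this scalar is never $1$ (using the arithmetic hypotheses on $l$). This is not what happens. The operator $D_L(\xi)=(T_+\ox 1)(\Omega\,\xi\,Q)Q^{-1}\Omega_L^{-1}$ applies $T_+$ to the $u_+$-leg, and $T_+$ sends a homogeneous element of degree $\mu$ to an element of degree $T\mu$ (or to $0$); it does \emph{not} preserve each degree-$\mu$ component. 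Consequently there is no ``eigenvalue of $D_L$ on the degree-$\mu$ piece'' to compute, and the proposed spectral obstruction simply does not arise. The coprimality hypotheses on $l$ in Section~\ref{sect:bdt} are there to make the various subgroups of $\G$ split and the forms non-degenerate (e.g.\ for Lemma~\ref{lem:GT} and the existence of $\S$); they play no role in the invertibility you want here.

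The actual reason $\mathrm{id}-D_L$ is invertible on $I_+\ox I_-$ is the Belavin--Drinfeld nilpotence condition itself, encoded in the fact (recorded in Section~\ref{sect:JBD}) that $T_+^n|_{I_+}=0$ for $n$ the nilpotence degree. Since $T_+$ is an algebra map, one checks that the $k$-fold iterate $D_L^k(\xi)$ contains the factor $(T_+^k\ox 1)(\xi)$ (up to multiplication by units from $\C[G]\ox\C[G]$ and $(T_+^j\ox 1)(\Omega)$ with $j<k$); hence $D_L^n|_{I_+\ox I_-}=0$. The same computation shows $A_L^2$ itself is nilpotent on $I_+\ox I_-$, so one can skip the $D_L+N_L$ decomposition entirely: writing $J=B+Y$ with $Y\in I_+\ox I_-$, the ABRR equation becomes $(\mathrm{id}-A_L^2)(Y)=N_L(B)$, with $N_L(B)=A_L^2(B)-D_L(B)\in I_+\ox I_-$, and $(\mathrm{id}-A_L^2)|_{I_+\ox I_-}$ is invertible by nilpotence, giving $Y=\sum_{k\geq 0}(A_L^2)^k(N_L(B))$ (a finite sum). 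This is the cleaner and correct route, and it uses only the BD nilpotence condition, not the arithmetic of $l$. (Your observation that $N_L$ raises degree is correct and salvageable---but it is easiest to see on the $u_-$-leg, which $T_+$ does not touch, rather than on the $u_+$-leg where $T_+$ shifts degrees via $T$.)
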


In the proof of the following lemma we use the fact that for any element $K_\mu\in \C[G]$, and $\nu\in\G$, we have
\[
(T^{\pm 1}(K_\mu))(\nu)=K_\mu(T^{\mp1}\nu).
\]
This follows from the easy sequence
\[
(T^{\pm 1}(K_\mu))(\nu)=K_{T^{\pm 1}\mu}(\nu)=(T^{\pm1}\mu,\nu)=(\mu,T^{\mp1}\nu)=K_\mu(T^{\mp1}\nu).
\]

\begin{lemma}\label{lem:355}
There are unique solutions $J_L,J_R\in \S^{-1}\Omega^{-1/2}_{L^{\perp}}+I_+\ox I_-$ to the left and right $2$-component ABRR equations, respectively.
\end{lemma}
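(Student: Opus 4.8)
The plan is to reduce the statement to the grouplike case already treated in the preceding lemma, and then to verify that the proposed initial condition $X_0:=\S^{-1}\Omega_{L^\perp}^{-1/2}$ genuinely solves equation~\eqref{eq:347}. Granting that, the preceding lemma immediately supplies both existence and uniqueness of $J_L$; $J_R$ is obtained by the identical argument for the right equation, transporting the final identity below to the second tensor leg via symmetry of $\Omega$ and antisymmetry of $\S$. Note first that $\Omega_{L^\perp}^{1/2}$ makes sense as the unique square-root bicharacter of $\Omega_{L^\perp}$ taking values in the group $\mu_l$ of $l$th roots of unity, which exists because $l$ is odd; in particular $X_0\in\C[G]\ox\C[G]$.

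To check that $X_0$ solves~\eqref{eq:347} I would work entirely inside $\C[G]\ox\C[G]$ using the splitting $\G=\L\times\L^\perp$, decomposing every bicharacter into its four blocks relative to the induced decomposition of $\G\times\G$. The structural inputs to record first are: $\Omega=\Omega_L\,\Omega_{L^\perp}$ with the two cross blocks trivial, since $\L\perp\L^\perp$; $Q=\Zeta\Sigma$ is concentrated on the two cross blocks, with $\Zeta=\S|(\L\times\L^\perp)$ and $\Sigma=\S|(\L^\perp\times\L)$; and, by Lemma~\ref{lem:GT} together with $T$-invariance of the form, $T$ fixes $\L$ pointwise and maps $\L^\perp$ onto itself, so that $T\ox 1$ — which acts by precomposing the first tensor leg with $T^{-1}$ — is the identity on any block whose first leg lies in $\L$. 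A blockwise multiplication then shows that on the $\L\times\L$, $\L\times\L^\perp$, and $\L^\perp\times\L$ blocks the factors of $\S^{\pm 1}$ contributed by $X_0$ cancel exactly against those of $Q^{\pm 1}$, so that $(T\ox 1)(\Omega X_0 Q)Q^{-1}\Omega_L^{-1}$ and $X_0$ already agree there; hence the entire content of~\eqref{eq:347} collapses to the single identity of bicharacters on $\L^\perp\times\L^\perp$,
\[
(T\ox 1)\!\left(\Omega_{L^\perp}^{1/2}\,\S_{\perp}^{-1}\right)=\Omega_{L^\perp}^{-1/2}\,\S_{\perp}^{-1},
\]
where $\S_{\perp}$ denotes the restriction of $\S$ to $\L^\perp\times\L^\perp$.

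It remains to deduce this identity from the defining equation~\eqref{eq:seq}. Let $p:\G\to\L^\perp$ be the projection along $\L$ (which commutes with $T$), and for $\alpha\in\Gamma_1$ set $z=p(\alpha)\in\L^\perp$, so that $\alpha-T\alpha=(1-T)z$ and $p(\alpha+T\alpha)=(1+T)z$. Restricting the free variable of~\eqref{eq:seq} to $\L^\perp$ and using that on this block $\S^2$ sees only $\S_\perp$ and $\Omega$ sees only $\Omega_{L^\perp}$ (which ignores the $\L$-component of the other argument), equation~\eqref{eq:seq} becomes $\S_\perp^2((1-T)z,?)=\Omega_{L^\perp}((1+T)z,?)$. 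Since $1-T$ is an automorphism of $\L^\perp$ — its image contains the spanning set $\{\alpha-T\alpha:\alpha\in\Gamma_1\}$ of $\L^\perp$, hence it is onto, hence bijective — the vectors $z=p(\alpha)$ span $\L^\perp$, and bilinearity in $z$ upgrades this to
\[
\S_\perp^2(z,?)\,\S_\perp^2(Tz,?)^{-1}=\Omega_{L^\perp}(z,?)\,\Omega_{L^\perp}(Tz,?)\qquad\text{for all }z\in\L^\perp.
\]
After replacing $z$ by $Tz$ this is precisely the square of the displayed identity; and since every bicharacter in sight is $\mu_l$-valued and squaring is injective on $\mu_l$ for odd $l$, the squared identity forces the unsquared one. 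I expect the only real friction to be the block bookkeeping — in particular keeping straight that $T\ox 1$ is inert on a block exactly when its first leg lies in $\L$, together with the auxiliary facts that $T$ preserves the splitting $\G=\L\times\L^\perp$ and that $1-T$ is invertible on $\L^\perp$ — whereas the conceptual heart, namely the square-root extraction, is exactly where oddness of $l$ is used.
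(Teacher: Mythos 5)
Your argument is correct and follows essentially the same path as the paper's proof: reduce to the grouplike initial condition via the preceding lemma, conjugate to see that the cross blocks and the $\L\times\L$ block are trivially satisfied, and then identify the surviving $\L^\perp\times\L^\perp$ identity with~\eqref{eq:seq}. The paper organizes this by casework on $\mu\in\L$ versus $\mu\in\L^\perp$ rather than by explicit bicharacter blocks, and writes the final reduction as ``$\Omega^{1/2}(\mu+T\mu,\nu)=\S(\mu-T\mu,\nu)$, which holds by~\eqref{eq:seq}''; you make two points explicit that the paper leaves implicit, namely (i) that one is really squaring and then invoking uniqueness of square roots in $\mu_l$ for odd $l$, and (ii) that invertibility of $1-T$ on $\L^\perp$ is what upgrades the hypothesis from the elements $\alpha-T\alpha$ to all of $\L^\perp$. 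Two small slips do not affect the conclusion: on the $\L\times\L$ block the mechanism is not cancellation of $\S$ against $Q$ (since $Q$ vanishes there) but triviality of $T\ox 1$ together with $\Omega_L\Omega_L^{-1}=1$; and the squared displayed identity becomes your upgraded~\eqref{eq:seq} identity only after both the substitution $\mu=Tz$ \emph{and} inverting both sides, which is equivalent but not quite ``precisely'' what you wrote.
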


\begin{proof}
We are claiming first that $\S^{-1}\Omega^{-1/2}_L$ solves the degree $0$ ABRR equations from the previous lemma.  Reorganizing, and applying $T^{-1}\ox 1$, we see that $\S^{-1}\Omega^{-1/2}_L$ solves ABRR on the left, say, if and only if the equation
\[
\Omega\Omega_{L^{\perp}}^{-1/2}(T^{-1}\ox 1)(\Omega_L^{-1}\Omega^{1/2}_{L^{\perp}})=\S Q^{-1}(T^{-1}\ox 1)(\S^{-1}Q)
\]
is satisfied.  Using the fact that $T|L=id_L$ and $\Omega=\Omega_L\Omega_{L^{\perp}}$ we reduce to
\[
\Omega_{L^{\perp}}^{1/2}(T^{-1}\ox 1)(\Omega_{L^{\perp}}^{1/2})=\S Q^{-1}(T^{-1}\ox 1)(\S^{-1}Q).
\]
Applying to arbitrary elements $\mu,\nu\in\G$ gives the equivalent equation
\begin{equation}\label{eq:369}
\Omega^{1/2}_{L^\perp}(\mu+T\mu,\nu)=\S Q^{-1}(\mu-T\mu,\nu).
\end{equation}
By writing $\mu$ as a sum of elements in $\L$ and $\L^\perp$ we see that the above equation holds if and only if it holds when $\mu\in \L$, or $\mu\in \L^\perp$.  When $\mu\in \L$ both sides of equation~\eqref{eq:369} vanish since $T|\L=id_{\L}$.  Suppose now $\mu\in \L^\perp$.  When $\nu\in \L$ both sides of the equation vanish by the definition of $Q$, and when $\nu\in \L^\perp$ the equation reduces to
\[
\Omega^{1/2}(\mu+T\mu,\nu)=\S(\mu-T\mu,\nu),
\]
which holds by equation~\eqref{eq:seq}.  The check on the right is similar.
\begin{comment}
For the right equation, we reduce our equation to
\[
(1\ox T^{-1})(\Omega_{L^{\perp}}^{1/2})\Omega_{L^{\perp}}^{1/2}=(1\ox T^{-1})(\S Q^{-1})\S^{-1}Q.
\]
Applying on elements in $\G$ gives
\[
\Omega^{1/2}_{L^\perp}(\mu,\nu+T\nu)=\S Q^{-1}(\mu,T\nu-\nu)
\]
and using antisymmetry of $\S$ and $Q$ reduces this equation to
\[
\Omega^{1/2}_{L^\perp}(\nu+T\nu,\mu)=\S Q^{-1}(\nu-T\nu,\mu),
\]
which we have already verified.
\end{comment}
\end{proof}

\begin{lemma}\label{lem:401}
The elements $J_L$ and $J_R$ from Lemma~\ref{lem:355} are equal.  Rather, there is a unique simultaneous solution $J$ to both the left and right $2$-component ABRR equations in $\S^{-1}\Omega_{L^\perp}^{-1/2}+I_+\ox I_-$. 
\end{lemma}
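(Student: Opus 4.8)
The plan is to realise the common solution as the finite telescoped value of either $2$-component ABRR recursion and then to compare the two. First I would upgrade the $R$-matrix identity $(T_+\ox 1)(R)=(1\ox T_-)(R)$ to $(T_+\ox 1)^j(R)=(1\ox T_-)^j(R)$ for every $j\ge 1$, which follows by an immediate induction since $T_+\ox 1$ and $1\ox T_-$ commute as operators on $u_+\ox u_-$. I would also record three elementary consequences of the standing hypotheses: $T_+^n|_{I_+}=0$ forces $(T_+^n\ox 1)(R)=(T^n\ox 1)(\Omega)\in\C[G]\ox\C[G]$ (as $R-\Omega\in I_+\ox I_-$); $(T\ox 1)(\Omega_L)=(1\ox T^{\pm 1})(\Omega_L)=\Omega_L$ because $T|\L=\mathrm{id}$; and $(T\ox 1)(\Omega_{L^\perp})=(1\ox T^{-1})(\Omega_{L^\perp})$ because $\Omega$ is $T$-invariant.

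Next I would unfold the left equation. Writing $A_L^2(\xi)=(T_+\ox 1)(R)\,(T_+\ox 1)(\xi)\,(T_+\ox 1)(Q)\,Q^{-1}\Omega_L^{-1}$ and substituting $J_L=A_L^2(J_L)$ into itself $n$ times, the $R$-factors $(T_+^j\ox 1)(R)$, $j=1,\dots,n$, pile up on the left; the intervening Cartan factors, being grouplike, commute past one another and telescope via $Q^{-1}\Omega_L^{-1}Q=\Omega_L^{-1}$ in the commutative algebra $\C[G]\ox\C[G]$; and the tail $(T_+^n\ox 1)(J_L)$ is pushed into $\C[G]\ox\C[G]$, since $J_L-\S^{-1}\Omega_{L^\perp}^{-1/2}\in I_+\ox I_-$ is annihilated by $T_+^n\ox 1$. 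The outcome is a closed form in which $J_L$ equals $\prod_{j=1}^{n}(T_+^j\ox 1)(R)$ followed by a grouplike element assembled from $(T^n\ox 1)\big(\S^{-1}\Omega_{L^\perp}^{-1/2}Q\big)$, $Q^{-1}$ and $\Omega_L^{-n}$. The identical computation on $A_R^2$ produces the same formula with each $(T_+^j\ox 1)(R)$ replaced by $(1\ox T_-^j)(R)$ and $(T^n\ox 1)$ replaced by $(1\ox T^{-n})$. Since the $R$-products agree by the first step, $J_L=J_R$ reduces to the claim that $(T^n\ox 1)$ and $(1\ox T^{-n})$ agree on the grouplike element $\S^{-1}\Omega_{L^\perp}^{-1/2}Q$.

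To check this I would decompose $\G=\L\oplus\L^\perp$ into bidegree blocks. On every block meeting $\L$ there is nothing to prove — either $T$ and $T^{-1}$ act as the identity there, or the $\S^{-1}$ and $Q$ contributions cancel, since $Q|(\L\times\L^\perp)=\Zeta$ and $Q|(\L^\perp\times\L)=\Sigma$ are restrictions of $\S$. On $\L^\perp\times\L^\perp$ one has $Q=1$, so — using $T$-invariance of $\Omega_{L^\perp}$, plus the fact that squaring is an automorphism of the finite group of $\mu_l$-valued bicharacters on $\L^\perp$ since $l$ is odd — it suffices to show that $\S^2|_{\L^\perp\times\L^\perp}$ is $T$-invariant. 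Here~\eqref{eq:seq} enters: it says $\S^2\big((1-T)\alpha,\,?\big)=\Omega\big((1+T)\alpha,\,?\big)$ on $\G$ for $\alpha\in\Gamma_1$, hence $\S^2(x,\,?)=\Omega\big(\psi(x),\,?\big)$ for all $x\in\L^\perp$, where $\psi\colon\L^\perp\to\G$ is the $\mathbb{Z}/l\mathbb{Z}$-linear map $(1-T)\alpha\mapsto(1+T)\alpha$ — well defined because the elements $(1-T)\alpha$ ($\alpha\in\Gamma_1$) are linearly independent under our nondegeneracy assumption on $l$, equivalently because $1-T$ restricts to an isomorphism $\G_1\to\L^\perp$. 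A short computation then gives $\psi(Tx)-T\psi(x)\in\L$ for $x\in\L^\perp$; since $T$ preserves the form and $\L=(\L^\perp)^\perp$, this is exactly the $T$-invariance of $\S^2|_{\L^\perp\times\L^\perp}$.

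This yields $J_L=J_R$, and the remaining ``unique simultaneous solution'' assertion is then immediate from the uniqueness already established in Lemma~\ref{lem:355}. I expect the crux to be the step just above: extracting $T$-invariance of $\S^2|_{\L^\perp\times\L^\perp}$ from~\eqref{eq:seq}, the delicate point being that $\psi$ intertwines $T$ only modulo $\L$, which suffices only because $\L=(\L^\perp)^\perp$. A secondary nuisance is the grouplike bookkeeping in the unfolding step — one must be sure the telescoping is honestly finite and that the two tails genuinely coincide after the replacement $(T^n\ox 1)\leftrightarrow(1\ox T^{-n})$.
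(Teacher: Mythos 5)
Your proof is correct, but it takes a genuinely different route from the paper's, and at one point does substantially more work than necessary. The paper's proof is the soft one: it shows that the operators $A_L^2$ and $A_R^2$ commute (using the alternate expressions with $\Sigma$ and $\Zeta$ in place of $Q$, together with $(T_+\ox 1)(R)=(1\ox T_-)(R)$), and then deduces both that $J_L$ solves the right equation and that $J_R$ solves the left one by the uniqueness of Lemma~\ref{lem:355} — a two-line argument once commutativity is in hand, with no appeal to a closed formula. You instead telescope each ABRR recursion $n$ times to land on an explicit product formula and compare the two outputs directly. That computation is sound and in effect reproves the closed-form expression that the paper only establishes in the lemma \emph{after} this one, so there is some duplication of labor.

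The one place you overshoot is the final Cartan identity $(T^n\ox 1)(BQ)=(1\ox T^{-n})(BQ)$ with $B=\S^{-1}\Omega_{L^\perp}^{-1/2}$. Your $\L\oplus\L^\perp$ block analysis, including the map $\psi$ and the verification that $\psi(Tx)-T\psi(x)\in\L$, does work (writing $T\alpha=\gamma+\lambda$ with $\gamma\in\G_1$, $\lambda\in\L$, one gets $\psi(Tx)-T\psi(x)=-2\lambda\in\L$), but it is not needed. By Lemma~\ref{lem:355} the element $B$ already solves both degree-$0$ equations, i.e.\ $f(B)=g(B)=B$ for $f(X)=(T\ox 1)(\Omega XQ)Q^{-1}\Omega_L^{-1}$ and $g(X)=(1\ox T^{-1})(\Omega XQ)Q^{-1}\Omega_L^{-1}$. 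Telescoping these $n$ times in $\C[G]\ox\C[G]$ gives
\[
\textstyle\prod_{j=1}^{n}(T^j\ox 1)(\Omega)\cdot(T^n\ox 1)(BQ)\cdot\Omega_L^{-n}Q^{-1}=B=\prod_{j=1}^{n}(1\ox T^{-j})(\Omega)\cdot(1\ox T^{-n})(BQ)\cdot\Omega_L^{-n}Q^{-1},
\]
and since $(T^j\ox 1)(\Omega)=(1\ox T^{-j})(\Omega)$ by $T$-invariance of the form, cancelling the common factors yields $(T^n\ox 1)(BQ)=(1\ox T^{-n})(BQ)$ immediately. So the ingredient you were extracting from~\eqref{eq:seq} via $\psi$ is already packaged inside the content of Lemma~\ref{lem:355}, and reusing that lemma directly would have shortened your argument considerably.
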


\begin{proof}
One shows that the operators $A^2_L$ and $A^2_R$ commute, then proceeds as in~\cite[Cor. 4.1]{ESS}.
\begin{comment}
One uses the alternated expressions
\[
A_L^2(\xi)=(T_+\ox 1)(R\xi \Sigma)\Sigma^{-1}\Omega_L^{-1},\ \ A_L^2(\xi)=(1\ox T_- )(R\xi \Zeta)\Zeta^{-1}\Omega_L^{-1}.
\]
To get
\[
A_R^2A_L^2(\xi)=(1\ox T_-)(R)(T_+\ox T_-)(R\xi\Sigma\Zeta)\Sigma^{-1}\Zeta^{-1}\Omega^{-2}_L
\]
\[
=(T_+\ox 1)(R)(T_+\ox T_-)(R\xi\Sigma\Zeta)\Sigma^{-1}\Zeta^{-1}\Omega^{-2}_L=A_L^2A_R^2(\xi)
\]
\end{comment}
\end{proof}

We find now

\begin{lemma}[{cf.~\cite[Prop. 3.3]{ESS}}]
Our proposed twist $J_{T,\S}$ solves both the left and right $2$-component ABRR equations.
\end{lemma}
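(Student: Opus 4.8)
The plan is to verify directly that the explicit element $J_{T,\S}$ satisfies the left $2$-component ABRR equation $A_L^2(J_{T,\S}) = J_{T,\S}$, and then either repeat the computation for the right equation or deduce it by the symmetry $(T_+\ox 1)(R)=(1\ox T_-)(R)$ together with the uniqueness statement of Lemma~\ref{lem:401}. Recall $J_{T,\S}$ has the form $(T_+\ox 1)(R)\cdots(T^n_+\ox 1)(R)\cdot\S^{-1}\Omega_{L^\perp}^{-1/2}\cdot(T^n\ox 1)(\Omega)^{-1}\cdots(T\ox 1)(\Omega)^{-1}$; I would split this as the "nilpotent head" $P := \prod_{k=1}^n (T^k_+\ox 1)(R)$, the "grouplike core" $C := \S^{-1}\Omega_{L^\perp}^{-1/2}$, and the "grouplike tail" $D := \prod_{k=1}^n (T^k\ox 1)(\Omega)^{-1}$, so $J_{T,\S} = PCD$.

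The key step is a telescoping identity for the head. Applying $A_L^2$ amounts to the operation $\xi\mapsto (T_+\ox 1)(R\,\xi\,Q)\,Q^{-1}\Omega_L^{-1}$ (or with $\Sigma$ in place of $Q$, per the remark following the definition). First I would compute $A_L^2$ applied to $P$ times a grouplike factor: since $T_+$ is a Hopf endomorphism, $(T_+\ox 1)$ is an algebra map, so $(T_+\ox 1)(R\cdot P \cdot g) = (T_+\ox 1)(R)\cdot (T_+\ox 1)(P)\cdot (T_+\ox 1)(g)$, and $(T_+\ox 1)(P) = \prod_{k=1}^n (T^{k+1}_+\ox 1)(R) = \prod_{k=2}^{n+1}(T^k_+\ox 1)(R)$. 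Because $n$ is the nilpotence degree of $T_+$, the factor $(T^{n+1}_+\ox 1)(R)$ equals $(T^{n+1}\ox 1)(\Omega)$ (its nilpotent part dies), so $(T_+\ox 1)(R)\cdot(T_+\ox 1)(P) = P\cdot(T^{n+1}\ox 1)(\Omega)$ after reindexing — i.e. the operator $A_L^2$ slides one copy of $R$ through $P$ at the cost of a grouplike factor. Iterating, and bundling the grouplike factors together with $C$ and $D$, reduces the fixed-point equation $A_L^2(PCD)=PCD$ to a purely abelian identity in $\C[G]\ox\C[G]$ about how the grouplike data $\S$, $\Omega_{L^\perp}$, $\Omega_L$, $Q$ and the $(T^k\ox 1)(\Omega)$ interact. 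That identity is exactly (a consequence of) the degree-$0$ ABRR equation~\eqref{eq:369}, already established inside the proof of Lemma~\ref{lem:355} via~\eqref{eq:seq}; the tail $D$ is precisely engineered so that the telescoping grouplike residue matches.

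Concretely I would argue: $A_L^2$ fixes the product $PCD$ iff $P\cdot(T^{n+1}\ox 1)(\Omega)\cdot(T_+\ox 1)(CD\,Q)\,Q^{-1}\Omega_L^{-1} = PCD$, and since $C$, $D$, $Q$, $\Omega_L$ are grouplike (central modulo the nilpotent ideal, commuting with everything grouplike) and $P$ is invertible, this collapses to a grouplike equation which, on applying it to arbitrary $\mu\otimes\nu\in\G\times\G$ and decomposing $\G = \L\times\L^\perp$, becomes~\eqref{eq:369} — already verified. The right equation is handled symmetrically using the alternate expression with $\Zeta$, or simply by noting that $J_R$ lies in the same coset $\S^{-1}\Omega_{L^\perp}^{-1/2}+I_+\ox I_-$, that $J_{T,\S}$ does too (its head $P$ lies in $1+I_+\ox I_-$), and invoking the uniqueness of the simultaneous solution $J$ from Lemma~\ref{lem:401} once we know $J_{T,\S}$ solves even one of the two equations — though strictly speaking one still wants to confirm $J_{T,\S}$ solves both, so I would record the right-hand computation as "similar."

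The main obstacle is bookkeeping: keeping straight the interplay between $T$, $T_+$, and the various $T^k$, ensuring the grouplike residue from telescoping the $n$ copies of $R$ matches $D$ exactly (including the half-powers $\Omega_{L^\perp}^{\pm 1/2}$ and the fact that $\Omega_L$, not $\Omega$, appears in the ABRR operator), and confirming that the reduction to~\eqref{eq:369} really is what falls out rather than some shifted variant. None of these steps is deep — each is the discrete shadow of the corresponding step in~\cite{ESS,EN01} — so I would present the telescoping identity in full and then say the grouplike reduction "proceeds as in the proof of Lemma~\ref{lem:355}," citing~\eqref{eq:369}.
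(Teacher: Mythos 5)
Your verification is correct and relies on the same two ingredients as the paper's proof — nilpotence of $T_+$ telescopes the head $P$, and the reduction to the grouplike fixed-point identity $(T\ox 1)(\Omega B Q)Q^{-1}\Omega_L^{-1}=B$ already established in Lemma~\ref{lem:355} — so the approach is essentially the same. The organization differs slightly: you verify $A_L^2(J_{T,\S})=J_{T,\S}$ directly by a one-step telescope and then appeal to the uniqueness in Lemmas~\ref{lem:355}--\ref{lem:401} to get the right equation, whereas the paper runs the computation in the other direction, showing $J=(A_L^2)^n(J)=(A_L^2)^n(B)$ and then computing $(A_L^2)^k(B)$ inductively to recover $J_{T,\S}$; these are equivalent bookkeeping choices. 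One small slip worth correcting: $P$ does \emph{not} lie in $1+I_+\ox I_-$ (its grouplike part is $\prod_{k=1}^n(T^k\ox 1)(\Omega)$, not $1$), but since that product cancels exactly against the tail $D$, the conclusion you actually need — that $J_{T,\S}=PCD\in \S^{-1}\Omega_{L^\perp}^{-1/2}+I_+\ox I_-$ — still holds.
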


\begin{proof}
Let $J$ denote the solution from Lemma~\ref{lem:401}.  We have $J=B+J_+$, where $B=\S^{-1}\Omega_{L^\perp}^{-1/2}$ and $J_+\in I_+\ox I_-$.  From the appearance of $T_+$ in $A^2_L$, and the fact that $J=A_L^2(J)$, we have
\[
J=(A^2_L)^n(J)=(A^2_L)^n(B)+(A^2_L)^n(J_+)=(A^2_L)^n(B),
\]
where $n$ is the nilpotence degree of $T_+$.  One establishes the equality
\[
(A^2_L)^k(B)=(T_+\ox 1)(R)\dots (T^k_+\ox 1)(R)B(T^k\ox 1)(\Omega)^{-1}\dots(T\ox 1)(\Omega)^{-1}
\]
by induction on $k$, using the fact that $(T\ox 1)(\Omega BQ)Q^{-1}\Omega_L^{-1}=B$.  This gives $(A^2_L)^n(B)=J_{T,\S}$.
\end{proof}

\subsection{The $3$-component and mixed ABRR equations}

For any element $\xi\in u_q\ox u_q$ take $\xi_{12,3}=(\Delta\ox 1)(\xi)$ and $\xi_{1,23}=(1\ox\Delta)(\xi)$.  So the dual cocycle equation for a twist now appears as $J_{12,3}J_{12}=J_{1,23}J_{23}$, where $J_{12}$ and $J_{23}$ are $J\ox 1$ and $1\ox J$ respectively.

\begin{definition}
Take  $A_L^3$ and $A_R^3$ to be the linear endomorphisms of $u_+\ox u_q\ox u_{-}$ defined by
\[
\begin{array}{l}
A_L^3(\eta)=(T_+\ox 1\ox 1)(R_{13}R_{12}\eta Q_{12}Q_{13})Q_{13}^{-1}Q_{12}^{-1}(\Omega_L)_{13}^{-1}(\Omega_L)_{12}^{-1}\\\\
A_R^3(\eta)=(1\ox 1\ox T_-)(R_{13}R_{23}\eta Q_{13}Q_{23})Q_{23}^{-1}Q_{13}^{-1}(\Omega_L)_{23}^{-1}(\Omega_L)_{13}^{-1}.
\end{array}
\]
The left and right $3$-component ABRR equations are the equations $A^3_L(X)=X$ and $A^3_R(X)=X$.
\end{definition}

Let us fix $J=J_{T,\S}$.

\begin{lemma}
The elements $J_{1,23}J_{23}$ and $J_{12,3}J_{12}$ solve the left and right $3$-component ABRR equations respectively.
\end{lemma}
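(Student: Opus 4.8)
The plan is to deduce both assertions from the two-component fixed-point equations $A_L^2(J)=J$ and $A_R^2(J)=J$ for $J=J_{T,\S}$, established above, by inserting one leg of the coproduct.

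The first step is to record that $A_L^3$ (resp.\ $A_R^3$) is nothing but $A_L^2$ (resp.\ $A_R^2$) transported through a coproduct: as maps out of $u_+\ox u_-$ one has $(1\ox\Delta)\circ A_L^2=A_L^3\circ(1\ox\Delta)$ and $(\Delta\ox1)\circ A_R^2=A_R^3\circ(\Delta\ox1)$. This is a short formal verification using only the quasitriangularity identities $(1\ox\Delta)(R)=R_{13}R_{12}$, $(\Delta\ox1)(R)=R_{13}R_{23}$; the fact that the bicharacters $Q$ and $\Omega_L$ split multiplicatively under a coproduct into commuting factors on the relevant pairs of legs; and the fact that $T_\pm$ acts on a single tensor leg, hence commutes with the coproduct on the others. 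Feeding $A_L^2(J)=J$ and $A_R^2(J)=J$ into these intertwiners immediately gives that $J_{1,23}=(1\ox\Delta)(J)$ solves the left three-component ABRR equation and that $J_{12,3}=(\Delta\ox1)(J)$ solves the right one.

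It then remains to see that the extra tensor factors do not disturb the fixed-point property, i.e.\ that $J_{1,23}J_{23}$ still solves $A_L^3(X)=X$ and $J_{12,3}J_{12}$ still solves $A_R^3(X)=X$. For the left equation I would present the trailing factor as $J_{23}=1\ox A_L^2(J)$, which endows it with its own $R_{23}$, $Q_{23}$, $(\Omega_L)_{23}$ and a $T_+$ on the second leg, and then commute this presentation to the right through the $R$-matrix and Cartan bicharacter factors occurring in $A_L^3$; the corrections so produced are reorganized using the quasitriangularity hexagons, the Yang--Baxter relation $R_{12}R_{13}R_{23}=R_{23}R_{13}R_{12}$, and the equation $A_L^2(J)=J$ itself, and they cancel to leave $A_L^3(J_{1,23}J_{23})=J_{1,23}J_{23}$. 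The right equation is handled by the mirror argument, now propagating $J_{12}=(A_R^2(J))\ox1$ through $A_R^3$. This propagation is the one genuinely delicate point of the proof: the intertwining of $A^2$ with the coproduct is immediate, but making the correction terms telescope is combinatorially involved and reproduces, in the present finite-dimensional non-dynamical setting, the computation already carried out in~\cite{ESS} (cf.\ also~\cite[Sect.~5.2]{EN01}).
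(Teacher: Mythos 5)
Your first step --- that $(1\ox\Delta)\circ A_L^2=A_L^3\circ(1\ox\Delta)$ and $(\Delta\ox 1)\circ A_R^2=A_R^3\circ(\Delta\ox 1)$, so that $J_{1,23}$ and $J_{12,3}$ are themselves fixed points of $A_L^3$ and $A_R^3$ --- is correct and is exactly how the paper begins (it is the line $J_{1,23}=(1\ox\Delta)(A_L^2(J))=A_L^3(J_{1,23})$).

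The second step, however, has a genuine gap, both in the mechanism you propose and in the fact you would need to make any mechanism work. What must be shown is the identity $A_L^3(J_{1,23}J_{23})=A_L^3(J_{1,23})\,J_{23}$, i.e.\ that the trailing $J_{23}$ simply factors out of the operator. Note the geometry of $A_L^3$: the $R$-matrix factors $R_{13}R_{12}$ sit to the \emph{left} of the argument, while everything to the \emph{right} of $J_{23}$ is a product of Cartan bicharacters $Q_{12}Q_{13}=Q_{1,23}$ (which may be replaced by $\Sigma_{1,23}$ with $\Sigma=\S|\L^\perp\times\L$) and $(\Omega_L)_{1,23}$, together with the operator $T_+\ox 1\ox 1$, which acts trivially on $J_{23}$. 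So $J_{23}$ never has to pass an $R$-matrix, and the Yang--Baxter relation and hexagon identities you invoke play no role here; there is nothing for them to act on. The only thing to verify is that $J_{23}$ commutes with $\Sigma_{1,23}$, with $(T_+\ox1\ox1)(\Sigma_{1,23})$, and with $(\Omega_L)_{1,23}$. This is the one substantive fact, and it is absent from your sketch: it holds because the legs of these bicharacters sitting in positions $2$ and $3$ are coproducts of elements of $\C[L]$ (as $\Sigma\in\C[L^\perp]\ox\C[L]$ and $\Omega_L$ is supported on $L\times L$), and the diagonal $\Delta(K_\mu)$ of any $K_\mu\in L$ centralizes $J$ --- this uses $R\Delta(h)=\Delta(h)R$ for cocommutative $h$ together with the invariance $\Sigma_{1,23}=(1\ox T_+^k\ox 1)(\Sigma_{1,23})$ to handle the factors $(T_+^k\ox 1)(R)$ appearing in $J$ (cf.\ Lemma~\ref{lem:grouplike0}). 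Without isolating this commutation, the assertion that the ``corrections cancel'' is unsupported; rewriting $J_{23}$ as $1\ox A_L^2(J)$ only introduces an extraneous $R_{23}$ and makes the computation harder, and in any case one still needs the same $\C[L]$-centrality to conclude.
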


\begin{proof}
Take $T_1=(T_+\ox 1\ox 1)$.  We claim first that $A_L^3(J_{1,23}J_{23})=A_L^3(J_{1,23})J_{23}$.  Note that we may replace $Q$ with $\Sigma=\S|\L^\perp\times \L$ in the equation for $A_L^3$, and that for any bicharacter $B$ we have $B_{12}B_{13}=B_{1,23}$.  Also recall that for any cocommutative element $h\in H$ we will have $R\Delta(h)=\Delta(h)R$, and that since $\Sigma\in \C[L^\perp\times L]$ we will have $\Sigma_{1,23}=(1\ox T_+^k\ox 1)(\Sigma_{1,23})$ for any nonnegative integer $k$.  Using these facts together, along with the particular form of $J=J_{T,\S}$, one see that
\[
\begin{array}{rl}
T_1(J_{23}\Sigma_{1,23})\Sigma_{1,23}^{-1}(\Omega_L)^{-1}_{1,23}&=J_{23}(T_1(\Sigma))_{1,23}\Sigma_{1,23}^{-1}(\Omega_L)^{-1}_{1,23}\\
&=(T_1(\Sigma))_{1,23}\Sigma_{1,23}^{-1}(\Omega_L)^{-1}_{1,23}J_{23},
\end{array}
\]
which implies $A_L^3(J_{1,23}J_{23})=A_L^3(J_{1,23})J_{23}$.  We now note that
\[
J_{1,23}=(1\ox\Delta)(J)=(1\ox\Delta)(A_L^2(J))=A^3_L(J_{1,23})
\]
to see $A_L^3(J_{1,23}J_{23})=J_{1,23}J_{23}$.  The equality $A_R^3(J_{12,3}J_{12})=J_{12,3}J_{12}$ is proved similarly.
\end{proof}

As was the case for the $2$-component equations, one finds that solutions to the equations $A_L^3(X)=X$ and $A_R^3(X)=X$ are uniquely determined by their components in $\C[G]\ox u_q\ox u_-$ and $u_-\ox u_q\ox \C[G]$ respectively.  (See also~\cite[Lem. 4.3]{ESS}.)  We also consider the mixed ABRR equation
\[
A_L^3A_R^3(X)=X.
\]
Solutions to this equation are uniquely determined by their component in $\C[G]\ox u_q\ox \C[G]$, which we denote $X_{0,0}$.  Note that
\begin{equation}\label{eq:492}
(J_{12,3}J_{12})_{0,0}=(J_{1,23}J_{23})_{0,0}=\S^{-1}_{12}\S_{13}^{-1}\S_{23}^{-1}(\Omega_{L^\perp}^{-1/2})_{12}(\Omega_{L^\perp}^{-1/2})_{13}(\Omega_{L^\perp}^{-1/2})_{23}.
\end{equation}
So we would like to establish

\begin{proposition}\label{prop:mixedABRR}
Both $J_{12,3}J_{12}$ and $J_{1,23}J_{23}$ solve the mixed ABRR equation $A_L^3A_R^3(X)=X$.
\end{proposition}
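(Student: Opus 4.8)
The plan is to reduce to the preceding lemma, which already supplies $A_L^3(J_{1,23}J_{23})=J_{1,23}J_{23}$ and $A_R^3(J_{12,3}J_{12})=J_{12,3}J_{12}$. Granting those, Proposition~\ref{prop:mixedABRR} is equivalent to the two ``crossed'' fixed-point identities $A_R^3(J_{1,23}J_{23})=J_{1,23}J_{23}$ and $A_L^3(J_{12,3}J_{12})=J_{12,3}J_{12}$: indeed, once the first crossed identity is known, $A_L^3A_R^3(J_{1,23}J_{23})=A_L^3(J_{1,23}J_{23})=J_{1,23}J_{23}$, and the statement for $J_{12,3}J_{12}$ is symmetric. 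It therefore suffices to prove $A_R^3(J_{1,23}J_{23})=J_{1,23}J_{23}$; the identity $A_L^3(J_{12,3}J_{12})=J_{12,3}J_{12}$ then follows by the same argument with left and right interchanged (exchanging $\Gamma_1\leftrightarrow\Gamma_2$, $T\leftrightarrow T^{-1}$, $u_+\leftrightarrow u_-$, $\Zeta\leftrightarrow\Sigma$ and reversing tensor factors), exactly as the corresponding swap was invoked in the proof of the preceding lemma.

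To establish $A_R^3(J_{1,23}J_{23})=J_{1,23}J_{23}$ I would first verify, just as for $A_L^2$ and $A_R^2$ in Lemma~\ref{lem:401}, that $A_L^3$ and $A_R^3$ commute; this is a direct expansion using the alternate expressions for $A_L^3$, $A_R^3$ obtained by replacing $Q$ with $\Sigma$, resp. $\Zeta$, together with the $T$-invariance of $\Omega$. Since $J_{1,23}J_{23}$ is already a solution of $A_L^3(X)=X$, commutativity shows $A_R^3(J_{1,23}J_{23})$ is again a solution of $A_L^3(X)=X$; as solutions of $A_L^3(X)=X$ are uniquely determined by their $\C[G]\ox u_q\ox u_-$-component, it remains only to check that $A_R^3(J_{1,23}J_{23})$ and $J_{1,23}J_{23}$ agree there. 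For this one uses $R=\Omega+R_+$ with $R_+\in I_+\ox I_-$, so that taking the degree-zero part in the first tensor slot kills the $R_{13}$-contribution coming from $(R_+)_{13}$; the surviving expression is governed entirely by the grouplike data and should collapse to the $\C[G]\ox u_q\ox u_-$-component of $J_{1,23}J_{23}$ upon invoking $A_R^2(J)=J$ (Lemma~\ref{lem:401}), the Hopf property of $T_-$, and the bicharacter identities $(\Delta\ox 1)(\Zeta)=\Zeta_{13}\Zeta_{23}$, $(\Delta\ox 1)(\Omega_L)=(\Omega_L)_{13}(\Omega_L)_{23}$, along with the fact that $\Zeta$, $\Sigma$, $\Omega_L$ are supported on the sublattices $L\times L^\perp$, $L^\perp\times L$, $L\times L$ of $G\times G$. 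This is the discrete counterpart of the argument in~\cite[Prop. 3.3]{ESS}.

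The main obstacle I anticipate is precisely this last bookkeeping step: to see that the first-slot-degree-zero part of $A_R^3(J_{1,23}J_{23})$ reproduces that of $J_{1,23}J_{23}$, one must commute the grouplike factors $\Omega_{13}$, $\Zeta_{13}$, $(\Omega_L)_{13}$ of $A_R^3$ past $(1\ox\Delta)(J)$ and the factor $1\ox J$, carefully tracking the nontrivial commutation relations between $\C[G]$ and $u_\pm$ and the way the coproduct spreads the ordered product of twisted $R$-matrices $(T_+^i\ox 1)(R)$ across the last two tensor slots; the restricted supports of $\Zeta$ on $L\times L^\perp$ and $\Sigma$ on $L^\perp\times L$ are what force the resulting cross-terms to cancel. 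Everything else is a routine translation of the $2$-component statements of Lemmas~\ref{lem:355} and~\ref{lem:401} into the $3$-component setting, following~\cite{ESS}. Once Proposition~\ref{prop:mixedABRR} is in hand, the dual cocycle identity $J_{12,3}J_{12}=J_{1,23}J_{23}$, and hence Theorem~\ref{thm:twists}, follows from the uniqueness of solutions of the mixed ABRR equation with prescribed $\C[G]\ox u_q\ox\C[G]$-component, together with the computation~\eqref{eq:492}.
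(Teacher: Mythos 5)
Your proposal follows the same overall architecture as the paper: reduce the mixed equation to the single crossed fixed-point identity $A_R^3(J_{1,23}J_{23})=J_{1,23}J_{23}$ (the other case by left--right symmetry), use commutativity of $A_L^3$ and $A_R^3$ to conclude that $A_R^3(J_{1,23}J_{23})$ is again a solution of $A_L^3(X)=X$, and then invoke uniqueness of such solutions given their component in $\C[G]\ox u_q\ox u_-$. You also correctly identify that the $R_{13}$ factor of $A_R^3$ contributes only its grouplike part after projecting to first-slot degree zero. All of this matches the paper.

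However, you explicitly flag the decisive degree-zero comparison as ``the main obstacle I anticipate'' and do not carry it out; this is where the proof actually lives, and it does not follow from the ingredients you list. The bicharacter identities $(\Delta\ox 1)(\Zeta)=\Zeta_{13}\Zeta_{23}$, $(\Delta\ox 1)(\Omega_L)=(\Omega_L)_{13}(\Omega_L)_{23}$, and support considerations alone are not enough. The paper introduces a dedicated lemma (Lemma~\ref{lem:516}) supplying two specific facts: (i) the identity $(1\ox T^{-1})(\S^{-1}\Omega_{L^\perp}^{1/2})=\S^{-1}\Omega^{-1/2}_{L^\perp}(1\ox T^{-1})(\Zeta^{-1})\Zeta$, used to pull the grouplike part $B=\S^{-1}\Omega_{L^\perp}^{-1/2}$ of $J_{1,23}$ out of the $T_-$-image at the cost of the correction factor $(1\ox T^{-1})(\Zeta^{-1})\Zeta$; and (ii) the commutativity $[(\Omega_L)_{13}(1\ox 1\ox T^{-1})(\Zeta^{-1}_{13})\Zeta_{13},\,(1\ox 1\ox T_-^k)(R_{23})]=0$, which rests on the non-obvious observation that $\Omega_L(1\ox T^{-1})(\Zeta^{-1})\Zeta$ lies in $\C[G]\ox\C[G_1^\perp]$, not merely on the stated supports of $\Zeta$, $\Sigma$, $\Omega_L$. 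Fact~(ii) is exactly what lets the extraneous correction factor introduced by~(i) be commuted through $J_{23}$ and cancelled. Without proving these two statements (or equivalents), the chain from $A_R^3(J_{1,23}J_{23})_0$ to $B_{1,23}J_{23}$ does not close, so the argument has a genuine gap precisely at the step you identify as the difficulty.

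Minor additional remark: invoking $A_R^2(J)=J$ is indeed used, but only \emph{after} Lemma~\ref{lem:516}(i) has been applied; on its own it governs the $u_q\ox u_-$ factor in slots 2--3 and does not help commute grouplike data across the first slot. You should make explicit how the factored form of $J$ interacts with $(T_-)_3$ before $A_R^2(J)=J$ can be applied slotwise.
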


From this proposition one easily finds the proof of Theorem~\ref{thm:twists}.  We only prove the proposition for $J_{1,23}J_{23}$, the situation for $J_{12,3}J_{12}$ being completely analogous.  Let us first give some technical lemmas.  Recall $\Zeta=\S|\L\times \L^\perp$.

\begin{lemma}\label{lem:516}
The element $\Zeta$ solves the following equations:
\begin{enumerate}
\item[(i)] $(1\ox T^{-1})(\S^{-1}\Omega_{L^\perp}^{1/2})=\S^{-1}\Omega^{-1/2}_{L^\perp}(1\ox T^{-1})(\Zeta^{-1})\Zeta$.
\item[(ii)] $\left[(\Omega_L)_{13}(1\ox 1\ox T^{-1})(\Zeta^{-1}_{13})\Zeta_{13},(1\ox 1\ox T_-^k)(R_{23})\right]=0$ for all $k\geq 1$.
\end{enumerate}
\end{lemma}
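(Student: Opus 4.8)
For part (i) the plan is to evaluate both sides — each a product of bicharacters in $\C[G]\ox\C[G]$ — on an arbitrary pair $(\mu,\nu)\in\G\times\G$, and reduce the claim to one elementary identity on $\L^\perp$. Recall that $(1\ox T^{-1})(B)(\mu,\nu)=B(\mu,T\nu)$ for a bicharacter $B$, and that $\Omega_{L^\perp}^{\pm1/2}$ is the bicharacter $(\mu,\nu)\mapsto q^{\pm(\pi_{\L^\perp}\mu,\,\pi_{\L^\perp}\nu)/2}$, where $\pi_{\L}$ and $\pi_{\L^\perp}$ denote the projections attached to the splitting $\G=\L\times\L^\perp$ and $2^{-1}$ is taken mod $l$. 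Decomposing $\mu$ and $\nu$ along $\L\times\L^\perp$, using that $T$ fixes $\L$ pointwise and stabilizes $\L^\perp$, that $\Zeta$ is supported on $\L\times\L^\perp$, and expanding $\S$ multiplicatively, one finds that every factor built from the $\L$-components cancels and the asserted identity collapses to
\[
\S\bigl(x,\,y-Ty\bigr)=q^{-(x,\,y+Ty)/2}\qquad\text{for all }x,y\in\L^\perp .
\]
Since both sides are characters in $x$, it is enough to verify this for the generators $x=\alpha-T\alpha$, $\alpha\in\Gamma_1$, of $\L^\perp$. Squaring the left-hand side and invoking equation~\eqref{eq:seq} turns it into $q^{(\alpha+T\alpha,\,y-Ty)}$, and $T$-invariance and symmetry of the form give $(\alpha+T\alpha,\,y-Ty)+(\alpha-T\alpha,\,y+Ty)=2(\alpha,y)-2(T\alpha,Ty)=0$, so the two squares agree. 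As $\S$ is valued in $l$th roots of unity and $l$ is odd, equality of squares forces equality, and (i) follows.

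For part (ii), set $X=(\Omega_L)\,(1\ox T^{-1})(\Zeta^{-1})\,\Zeta\in\C[G]\ox\C[G]$, so the bracketed element is $X_{13}$; again $X$ is a bicharacter on $\G$. I would argue in two steps. First, rewrite the second factor: iterating the identity $(T_+\ox1)(R)=(1\ox T_-)(R)$ yields $(1\ox T_-^k)(R)=(T_+^k\ox1)(R)$, whence $(1\ox1\ox T_-^k)(R_{23})=(1\ox T_+^k\ox1)(R_{23})$. In this form the legs in the third tensor factor are the original $F$-monomials of $R$ (times the grouplike part of $\Omega$), and a summand survives only when the matching leg $T_+^k(E_\mu^{n})$ in the second factor is nonzero, which forces $\mu\in\mathbb{Z}\Gamma_1$, since $T_+$ annihilates the two-sided ideal of $u_+$ generated by the $E_\beta$ with $\beta\in\Gamma-\Gamma_1$. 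Second, show that $\G_1$ lies in the right radical of the bicharacter $X$; equivalently, that $X$ is supported, in its second tensor factor, on $\G_1^\perp$ (identified with $G_1^\perp$). Granting both, the bracket vanishes: a grouplike $K_b$ with $b\in G_1^\perp$ centralizes every monomial $F_\mu^{n}K_\nu$ with $\mu\in\mathbb{Z}\Gamma_1$ (because $(b,\mu)=0$) as well as every grouplike, so $X_{13}$ commutes with each third-factor leg occurring in $(1\ox1\ox T_-^k)(R_{23})$, while its first and second legs do not interact.

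The substance of the second step — and the point at which equation~\eqref{eq:seq} re-enters — is the computation
\[
X(\mu,\alpha)=q^{(\pi_{\L}\mu,\,\pi_{\L}\alpha)}\,\S\bigl(\pi_{\L}\mu,\ \alpha-T\alpha\bigr)\qquad(\mu\in\G,\ \alpha\in\Gamma_1),
\]
which holds because $\alpha-T\alpha\in\L^\perp$ and $T|\L=\mathrm{id}_{\L}$. Squaring and using equation~\eqref{eq:seq} together with antisymmetry of $\S$ gives $X(\mu,\alpha)^2=q^{\,2(\pi_{\L}\mu,\,\pi_{\L}\alpha)-(\alpha+T\alpha,\,\pi_{\L}\mu)}$; since $\alpha-T\alpha\perp\L$ one has $(\alpha,\pi_{\L}\mu)=(T\alpha,\pi_{\L}\mu)=(\pi_{\L}\alpha,\pi_{\L}\mu)$, so the exponent vanishes by symmetry of the form and $X(\mu,\alpha)^2=1$. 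As $l$ is odd, $X(\mu,\alpha)=1$; and since every $\rho\in\Phi^+\cap\mathbb{Z}\Gamma_1$ is a non-negative combination of $\Gamma_1$, this propagates to $X(\mu,\rho)=1$ for exactly the roots $\rho$ that arose in the first step. I expect the only genuine obstacle to be bookkeeping — keeping straight the projections $\pi_{\L},\pi_{\L^\perp}$, the supports of the bicharacters $\Omega_L$, $\Zeta$, $\Sigma$, and the coupling of the two legs of $(T_+^k\ox1)(R)$ — and not anything structural: once equation~\eqref{eq:seq} is fed in at the right place, both parts degenerate to the remark that a square root of $1$ lying in the group of $l$th roots of unity equals $1$.
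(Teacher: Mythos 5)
Your proof is correct and follows essentially the same route as the paper's: you decompose along $\G=\L\times\L^\perp$, feed equation \eqref{eq:seq} into the resulting bicharacter identities via a squaring/odd-$l$ argument, and for (ii) you observe that the bicharacter $X$ trivializes on $\G\times\G_1$ so that $X_{13}$'s third leg lands in $\C[G_1^\perp]$, which centralizes the parabolic $\C\langle G,F_\beta:\beta\in\Gamma_1\rangle$ carrying the third leg of $(1\ox1\ox T^{k}_-)(R_{23})$. The only cosmetic deviation is your rerouting of the $R$-factor through $(1\ox T^{k}_+\ox1)(R_{23})$ by iterating $(T_+\ox1)(R)=(1\ox T_-)(R)$, rather than reading off directly that $T^{k}_-$ carries $u^-$ into $\C\langle F_\beta:\beta\in\Gamma_1\rangle$ when $k\geq 1$.
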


\begin{proof}
Equation (i) is equivalent to the equation
\[
\Omega^{1/2}_{L^\perp}(1\ox T^{-1})(\Omega_{L^\perp}^{1/2})=\S^{-1}(1\ox T^{-1})(\S)(1\ox T^{-1})(\Zeta^{-1})\Zeta,
\]
which is seen to hold by~\eqref{eq:seq}, just as in the proof of Lemma~\ref{lem:355}.  For (ii) first note that for any $\mu\in \G$ and $\nu\in \G_1$ we have
\[
\begin{array}{l}
{\left(\Omega_L(1\ox T^{-1})(\Zeta^{-1})\Zeta\right)}(\mu,\nu)\\
\hspace{1cm}=\Omega(\bar{\mu},\nu)\Zeta(\mu,\nu-T\nu)\\
\hspace{1cm}=\Omega(\bar{\mu},\nu)\S(\bar{\mu},\nu-T\nu)\\
\hspace{1cm}=\Omega(\bar{\mu},\nu)\Omega^{-1/2}(\bar{\mu},\nu+T\nu)=\Omega(\bar{\mu},\nu)\Omega^{-1}(\bar{\mu},\nu)=1,
\end{array}
\]
where $\bar{\mu}$ is the component of $\mu$ in $\L$ under the decomposition $\G=\L\times\L^\perp$.  So we see that the bicharacter in question vanishes on $\G\times \G_1$, and hence
\[
\Omega_L(1\ox T^{-1})(\Zeta^{-1})\Zeta\in \C[G]\ox\C[G_1^\perp].
\]
It follows that all elements in $\C\ox u_q\ox \left(\C\langle G,F_\beta:\beta\in \Gamma_1\rangle\right)$ centralize $(\Omega_L)_{13}(1\ox 1\ox T^{-1})(\Zeta^{-1}_{13})\Zeta_{13}$.  Since $(1\ox 1\ox T_-^k)(R_{23})$ is in this subspace we have (ii).
\end{proof}

We can now give the

\begin{proof}[Proof of Proposition~\ref{prop:mixedABRR}]
As noted above, we only prove that $J_{1,23}J_{23}$ solves the mixed ABRR equation.  Since this element already satisfies $A_L^3(X)=X$ it suffices to show that it also solves $A_R^3(X)=X$.  As in~\cite[Lem. 4.2]{ESS}, one checks that $A_L^3$ and $A_R^3$ commute so that $A_R^3(J_{1,23}J_{23})$ solves the left ABRR equation.  
\begin{comment}
We have
\[
A_R^3A_2^3(\eta)=(1\ox 1\ox T_-)(R_{13})(T_+\ox 1\ox T_-)(R_{23}R_{13}R_{12}\eta\Sigma_{1,23}\Zeta_{12,3})\Sigma_{1,23}^{-1}\Zeta_{12,3}^{-1}(\Omega_L)^{-1}_{1,23}(\Omega_L)^{-1}_{12,3}
\]
\[
=(T\ox 1\ox 1)(R_{13})(T_+\ox 1\ox T_-)(R_{12}R_{13}R_{23}\eta\Sigma_{1,23}\Zeta_{12,3})\Sigma_{1,23}^{-1}\Zeta_{12,3}^{-1}(\Omega_L)^{-1}_{1,23}(\Omega_L)^{-1}_{12,3}=A_L^3A_R^3(\eta)
\]
\end{comment}
By uniqueness of solutions we find that $A_R^3(J_{1,23}J_{23})=J_{1,23}J_{23}$ if and only if these elements have the same component in $\C[G]\ox u_q\ox u_-$.  Let $A_R^3(J_{1,23}J_{23})_0$ and $(J_{1,23}J_{23})_0$ denote these components.  Take $(T_-)_3=(1\ox 1\ox T_-)$ and $B=\S^{-1}\Omega^{-1/2}_{L^\perp}$.
\par

Since $J$ is in the subalgebra $\C\langle G\times G,E_{\alpha}\ox F_\beta:\alpha,\beta\in\Gamma\rangle$ we see that $(J_{1,23}J_{23})_0=B_{1,23}J_{23}$, and we need to establish
\[
A_R^3(J_{1,23}J_{23})_0=B_{1,23}J_{23}.
\]
We have
\[
\begin{array}{rl}
A_R^3(J_{1,23}J_{23})_0&=(T_-)_3(\Omega_{13}R_{23}B_{1,23}J_{23} \Zeta_{12,3})\Zeta_{12,3}^{-1}(\Omega_L)_{12,3}^{-1}\\
&=(T_-)_3(\Omega_{13}B_{1,23}R_{23}J_{23} \Zeta_{12,3})\Zeta_{12,3}^{-1}(\Omega_L)_{12,3}^{-1}.
\end{array}
\]
Use the equality $B\Omega_{L^\perp}=\S^{-1}\Omega_{L^\perp}^{1/2}$ and Lemma~\ref{lem:516} (i) to get
\[
\begin{array}{l}
A_R^3(J_{1,23}J_{23})_0\\
=(T_-)_3((\Omega_L)_{13}B_{12}B_{13}(\Omega_{L^\perp})_{13}R_{23}J_{23} \Zeta_{12,3})\Zeta_{12,3}^{-1}(\Omega_L)_{12,3}^{-1}\\
=(\Omega_L)_{13}B_{12}(T_-)_3(B_{13}(\Omega_{L^\perp})_{13}R_{23}J_{23} \Zeta_{12,3})\Zeta_{12,3}^{-1}(\Omega_L)_{12,3}^{-1}\\
=(\Omega_L)_{13}B_{12}B_{13}(T_-)_3(\Zeta^{-1}_{13})\Zeta_{13}(T_-)_3(R_{23}J_{23} \Zeta_{12,3})\Zeta_{12,3}^{-1}(\Omega_L)_{12,3}^{-1}.
\end{array}
\]
Since $J$ solves the $2$-component ABRR equations this final expression reduces to
\[
A_R^3(J_{1,23}J_{23})_0=(\Omega_L)_{13}B_{1,23}(T_-)_3(\Zeta^{-1}_{13})\Zeta_{13}J_{23}(T_-)_3(\Zeta_{13})\Zeta_{13}^{-1}(\Omega_L)_{13}^{-1}.
\]
By Lemma~\ref{lem:516} (ii) this final equation reduces to the desired equality 
\[
A_R^3(J_{1,23}J_{23})_0=B_{1,23}J_{23}=(J_{1,23}J_{23})_0.
\]
This implies that $J_{1,23}J_{23}$ solves the right $3$-component ABRR equation, and hence the mixed ABRR equation $A_L^3A_R^3(J_{1,23}J_{23})=J_{1,23}J_{23}$.
\end{proof}

\begin{proof}[Proof of Theorem~\ref{thm:twists}]
By uniqueness of solutions to the mixed ABRR equation, Proposition~\ref{prop:mixedABRR}, and~\eqref{eq:492}, we see that $J_{12,3}J_{12}=J_{1,23}J_{23}$.  The remaning identity $(\epsilon\ox 1)(J)=(1\ox\epsilon)(J)=1$ follows from the identity $(\epsilon\ox 1)(R)=(1\ox\epsilon)(R)=1$ and the fact that $\epsilon$ commutes with $T_{\pm}$.
\end{proof}

\section{Subalgebras from the $R$-matrix}
\label{sect:radford}

We recall here some information from~\cite{radford93}.  We will let $D(H)$ denote the Drinfeld double of a Hopf algebra $H$.  Recall that this is a quasitriangular Hopf algebra which, as a coalgebra, is simply the tensor coalgebra $D(H)=H\ox (H^\ast)^{cop}$.  Recall also that the two inclusions $H\to D(H)$ and $(H^\ast)^{cop}\to D(H)$ are Hopf algebra maps.  This is all the information we will need about the Drinfeld double, and we invite the reader to see~\cite[Sect. 10.3]{montgomery} for more information.

\subsection{The right and left subalgebras from $R$}
\label{sect:lrsubalg}

Let $H=(H,R)$ be a quasitriangular Hopf algebra.  We can consider for any $Q\in H\ox H$ the functions $t_Q:H^\ast\to H$, $f\mapsto (f\ox 1)(Q)$ and $t'_Q:H^\ast\to H$, $f\mapsto (1\ox f)(Q)$.  Indeed, for any $H_1,H_2\subset H$ with $Q\in H_1\ox H_2$ we can restrict these functions to $t_Q:H_1^\ast\to H_2$, $t'_Q:H_2^\ast\to H_1$.  For the $R$-matrix we have the right and left subspaces in $H$ defined as follows.

\begin{definition}
For a quasitriangular Hopf algebra $H=(H,R)$ we take $R_{(r)}=t_R(H^\ast)$ and $R_{(l)}=t'_R(H^\ast)$.
\end{definition}

We may refer to these subalgebras as the Radford subalgebras associated to $R$.  Using the properties of the $R$-matrix one shows

\begin{proposition}[{\cite[Prop. 2]{radford93}}]
The subspaces $R_{(l)}$ and $R_{(r)}$ are Hopf subalgebras in $H$.  Furthermore, the maps $t_R$ and $t'_{R}$ provide Hopf morphisms $(H^\ast)^{cop}\to H$ and $(H^\ast)^{op}\to H$, and Hopf isomorphisms $(R_{(l)}^\ast)^{cop}\overset{\cong}\to R_{(r)}$ and $(R_{(r)}^\ast)^{op}\overset{\cong}\to R_{(l)}$. 
\end{proposition}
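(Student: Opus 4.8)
The plan is to derive the whole statement from the defining identities of the $R$-matrix, in three steps: a computational step (getting the maps and subalgebras), a linear-algebra step (identifying the images as the legs of $R$), and a transport step (moving the Hopf structure across). First I would verify directly that $t_R$ and $t'_R$ are bialgebra morphisms with the advertised ``op''/``cop'' decorations. Writing $R=\sum_i a_i\ox b_i$, the convolution identity $t_R(fg)=(f\ox g\ox 1)(R_{13}R_{23})=t_R(f)t_R(g)$ (from $(\Delta\ox 1)(R)=R_{13}R_{23}$) shows $t_R$ is an algebra map out of $H^\ast$, while $\Delta\,t_R(f)=(f\ox 1\ox 1)(R_{13}R_{12})=t_R(f_{(2)})\ox t_R(f_{(1)})$ (from $(1\ox\Delta)(R)=R_{13}R_{12}$) shows it is a coalgebra map out of $(H^\ast)^{cop}$; the identities $(\epsilon\ox 1)(R)=(1\ox\epsilon)(R)=1$ handle the (co)unit. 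The mirror-image computations show $t'_R$ is a coalgebra map out of $H^\ast$ and an algebra map out of $(H^\ast)^{op}$. Since a bialgebra map between Hopf algebras automatically commutes with antipodes (one may also check this against $(S\ox 1)(R)=R^{-1}$), the maps $t_R\colon(H^\ast)^{cop}\to H$ and $t'_R\colon(H^\ast)^{op}\to H$ are Hopf morphisms, so their images $R_{(r)}=t_R(H^\ast)$ and $R_{(l)}=t'_R(H^\ast)$ are Hopf subalgebras of $H$.

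Next I would identify the images concretely by choosing a shortest expression $R=\sum_{i=1}^{n}a_i\ox b_i$. Minimality forces $\{a_i\}$ and $\{b_i\}$ to be linearly independent, and then testing against functionals dual to the $b_i$ (resp. the $a_i$) gives $R_{(l)}=\mathrm{span}\{a_i\}$ and $R_{(r)}=\mathrm{span}\{b_i\}$; in particular both are finite dimensional of dimension $n$ and $R\in R_{(l)}\ox R_{(r)}$. Because $R$ lies in $R_{(l)}\ox R_{(r)}$, the value $t_R(f)=\sum_i f(a_i)b_i$ depends only on $f|_{R_{(l)}}$, so $t_R$ factors as
\[
H^\ast\twoheadrightarrow R_{(l)}^\ast\xrightarrow{\ \bar t_R\ }R_{(r)}\hookrightarrow H,
\]
where $\bar t_R$ carries the basis of $R_{(l)}^\ast$ dual to $\{a_i\}$ onto the basis $\{b_i\}$ of $R_{(r)}$ and is therefore bijective; symmetrically $t'_R$ factors through a bijection $\bar t'_R\colon R_{(r)}^\ast\to R_{(l)}$.

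Finally I would transport the Hopf structure through these factorizations: the surjection $H^\ast\to R_{(l)}^\ast$ is the transpose of the Hopf inclusion $R_{(l)}\hookrightarrow H$, hence a surjective Hopf map $(H^\ast)^{cop}\to(R_{(l)}^\ast)^{cop}$, and $R_{(r)}\hookrightarrow H$ is an injective Hopf map; since $t_R$ is a Hopf map and factors as (injective Hopf)$\,\circ\,\bar t_R\,\circ\,$(surjective Hopf), the bijection $\bar t_R\colon(R_{(l)}^\ast)^{cop}\to R_{(r)}$ is itself a Hopf morphism, hence a Hopf isomorphism, and likewise $\bar t'_R\colon(R_{(r)}^\ast)^{op}\to R_{(l)}$. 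Equivalently one can avoid treating $H^\ast$ as a Hopf algebra and simply note that the identities of the first step descend verbatim inside $R_{(l)}\ox R_{(r)}$, since $R_{(l)},R_{(r)}$ are sub-bialgebras and $R\in R_{(l)}\ox R_{(r)}$. I expect the only real friction to be the consistent bookkeeping of the ``$op$''/``$cop$'' decorations — tracking which leg of each map carries the opposite algebra or co-opposite coalgebra structure — together with the minimal-rank argument that pins $R_{(l)}$ and $R_{(r)}$ down as the spans of the legs of $R$ and makes $\bar t_R,\bar t'_R$ well defined; the rest is a handful of one-line manipulations of the $R$-matrix axioms.
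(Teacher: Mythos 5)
The paper does not give a proof of this proposition; it is quoted verbatim from Radford~\cite[Prop.~2]{radford93}, and the text simply cites that reference. Your argument is correct and is, in substance, Radford's original proof: pull the (co)algebra-morphism identities for $t_R$, $t'_R$ out of $(\Delta\ox 1)(R)=R_{13}R_{23}$, $(1\ox\Delta)(R)=R_{13}R_{12}$, and the counit axioms (the automatic antipode compatibility for bialgebra maps between Hopf algebras handles $S$); use a minimal-rank decomposition of $R$ to identify $R_{(l)}$ and $R_{(r)}$ with the spans of the two legs of $R$; and then factor $t_R$ and $t'_R$ through the restriction surjections and the inclusions, which are Hopf maps, to upgrade the linear bijections $\bar t_R$, $\bar t'_R$ to Hopf isomorphisms once one notes that a map sandwiched between a surjective and an injective Hopf morphism, whose composite is a Hopf morphism, is itself one. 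The \emph{op}/\emph{cop} bookkeeping you flag as the main friction is handled correctly: $t_R$ reverses the coproduct and preserves the product, giving $(H^*)^{cop}\to H$, while $t'_R$ reverses the product and preserves the coproduct, giving $(H^*)^{op}\to H$, matching the conventions $t_R(f)=(f\ox 1)(R)$, $t'_R(f)=(1\ox f)(R)$, $R_{(r)}=t_R(H^*)$, $R_{(l)}=t'_R(H^*)$ used in Section~\ref{sect:lrsubalg}.
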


Take $H_R=R_{(l)}R_{(r)}$.  It turns out that this is a Hopf subalgebra in $H$, and that it is the minimal Hopf subalgebra in $H$ with $R\in H_R\ox H_R$.  A quasitriangular Hopf algebra is called {\it minimal} if $H=H_R$.  Strictly speaking, we will not be needing the following result.  It does, however, inform the approach of the current work, and so we repeat it here.

\begin{theorem}[{\cite[Thm. 2]{radford93}}]\label{thm:rad2}
For a minimal Hopf algebra $H$ there is a (unique) surjective map of quasitriangular Hopf algebras $Y:D(R_{(l)})\to H$ with $Y|R_{(r)}$ the inclusion and $Y|(R_{(l)}^\ast)^{cop}=t_R$.
\end{theorem}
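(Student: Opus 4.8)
The plan is to construct $Y$ directly, using the fact that $D(R_{(l)})=R_{(l)}\ox(R_{(l)}^\ast)^{cop}$ is generated as an algebra by its two canonical Hopf subalgebras, a copy of $R_{(l)}$ and a copy of $(R_{(l)}^\ast)^{cop}$, subject only to the straightening relation that rewrites $(1\ox g)(a\ox\epsilon)$ as a sum of terms $(a_{(2)}\ox\epsilon)(1\ox g_{(2)})$ weighted by evaluations of $g_{(1)}$ and $g_{(3)}$ on $a_{(1)}$ and $a_{(3)}$. Thus an algebra map out of $D(R_{(l)})$ is the same data as a pair of algebra maps out of $R_{(l)}$ and $(R_{(l)}^\ast)^{cop}$ whose images satisfy that relation. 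For the first I would take the inclusion $\iota\colon R_{(l)}\hookrightarrow H$, a Hopf map by the preceding proposition. For the second, I observe that since $R\in R_{(l)}\ox R_{(r)}$, the map $t_R\colon H^\ast\to H$ depends only on the restriction of its argument to $R_{(l)}$, so it factors through a Hopf map $j\colon(R_{(l)}^\ast)^{cop}\overset{\cong}\to R_{(r)}\subset H$; this $j$ is exactly the isomorphism of the preceding proposition. I then set $Y(a\ox g)=\iota(a)\,j(g)$ on the underlying space $R_{(l)}\ox R_{(l)}^\ast$ of $D(R_{(l)})$.

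The work is to check that $Y$ is an algebra map, and since $\iota$ and $j$ already are, this reduces to verifying that $R_{(l)}$ and $R_{(r)}$, sitting inside $H$, obey the same straightening relation that $R_{(l)}$ and $(R_{(l)}^\ast)^{cop}$ obey inside $D(R_{(l)})$. Unwinding the identification $(R_{(l)}^\ast)^{cop}\cong R_{(r)}$, this is an identity in $H$ of the form $j(g)\,a=\sum\langle g_{(1)},S^{-1}(a_{(3)})\rangle\langle g_{(3)},a_{(1)}\rangle\,a_{(2)}\,j(g_{(2)})$ for $a\in R_{(l)}$ and $g\in R_{(l)}^\ast$, and I would establish it from the quasitriangularity axioms $(\Delta\ox 1)(R)=R_{13}R_{23}$, $(1\ox\Delta)(R)=R_{13}R_{12}$, and $R\Delta(h)R^{-1}=\Delta^{op}(h)$, following~\cite{radford93}. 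Granting this, $Y$ is automatically a coalgebra map: $D(R_{(l)})$ carries the tensor-product coalgebra structure of $R_{(l)}$ and $(R_{(l)}^\ast)^{cop}$, $Y$ is the product $\iota(a)j(g)$ of the two coalgebra maps $\iota$ and $j$, and the comultiplication of $H$ is multiplicative, so the check is formal; hence $Y$ is a bialgebra, and so a Hopf algebra, map. Its image contains $R_{(l)}$ and $R_{(r)}$, hence $R_{(l)}R_{(r)}=H_R=H$ by minimality, so $Y$ is surjective. Fixing a basis $\{e_k\}$ of $R_{(l)}$ with dual basis $\{e^k\}$, the canonical $R$-matrix of $D(R_{(l)})$ is $\sum_k e_k\ox e^k$, and $(Y\ox Y)(\sum_k e_k\ox e^k)=\sum_k e_k\ox t_R(e^k)=R$, the last equality being immediate from $R\in R_{(l)}\ox R_{(r)}$ and the definition of $t_R$; so $Y$ is a morphism of quasitriangular Hopf algebras. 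Uniqueness is clear, since any $Y'$ with the stated restrictions agrees with $Y$ on the two generating subalgebras of $D(R_{(l)})$.

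The one genuine obstacle is the algebra-map verification in the second paragraph — showing that $R_{(l)}$ and $R_{(r)}$, as subalgebras of $H$, satisfy the double's straightening relation, where all three quasitriangularity axioms get consumed and the identification $(R_{(l)}^\ast)^{cop}\cong R_{(r)}$ must be tracked carefully. Everything else (coalgebra compatibility, surjectivity, preservation of the $R$-matrix, and uniqueness) is formal once $Y$ is known to be a well-defined Hopf algebra map.
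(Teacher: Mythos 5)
The paper gives no proof of this statement; it is quoted verbatim as \cite[Thm. 2]{radford93} and used as a black box, so there is no in-house argument to compare against. Your outline correctly reconstructs the architecture of Radford's proof: present $D(R_{(l)})$ as generated by the two canonical Hopf subalgebras $R_{(l)}$ and $(R_{(l)}^\ast)^{cop}$ subject to the straightening relation, define $Y$ on those subalgebras by the inclusion and by $t_R$ respectively, verify that the images in $H$ satisfy the same relation (which is where the three quasitriangularity axioms are spent), and then observe that the coalgebra compatibility, surjectivity, $R$-matrix preservation, and uniqueness are all formal consequences. Your computations for surjectivity (using $H=H_R=R_{(l)}R_{(r)}$), for $(Y\ox Y)$ carrying the canonical $R$-matrix $\sum_k e_k\ox e^k$ of the double to $R$, and for uniqueness are all correct.

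That said, you explicitly defer the one substantive step — that $\iota$ and $j$ land in $H$ compatibly with the double's cross relation — to ``following \cite{radford93},'' so as written the proposal is an accurate road map rather than a self-contained proof. This is the step that consumes the quasitriangularity axioms and requires careful bookkeeping with the chosen convention for $D(\cdot)$; leaving it as a citation is reasonable here precisely because the paper itself cites rather than proves the theorem, but be aware that it is not a ``formal'' verification like the others. One small point worth flagging: the statement as printed in the paper reads ``$Y|R_{(r)}$ the inclusion,'' which must be a typo for ``$Y|R_{(l)}$,'' since $R_{(r)}$ is not a subspace of $D(R_{(l)})$; your proposal silently makes this correction by taking $\iota\colon R_{(l)}\hookrightarrow H$, which is the intended reading (and agrees with Radford's original statement).
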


Taking the dual of $Y$, we see that there is a algebra inclusion
\[
H^\ast\to R_{(l)}^\ast\ox R_{(l)}^{op}\cong R_{(r)}\ox R_{(l)}^{op}
\]
given as the composite $H^\ast\overset{\Delta}\to H^\ast\ox H^\ast\overset{t_R\ox t'_R}\longrightarrow R_{(r)}\ox R_{(l)}^{op}$.
\par

We note that although minimality is not preserved under twists, a stronger condition called factorizability is preserved under twists.  Indeed, a finite dimensional quasitriangular Hopf algebra $H$ is factorizable if and only if the M\"uger center of $\mathrm{rep}(H)$ is trivial~\cite{EGNO}.  Small quantum groups are examples of factorizable Hopf algebras, and so the twists $u_q^J$ will be factorizable and hence minimal.

\subsection{Bicharacters as twists on abelian groups}
\label{sect:bchar}

\begin{lemma}
Let $\Lambda$ be a finite abelian group.  Any bicharacter $B\in \C[\Lambda]\ox \C[\Lambda]$ is an $R$-matrix for $\C[\Lambda]$.
\end{lemma}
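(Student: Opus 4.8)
The plan is to check, directly, the conditions that (by the definition recalled in Section~\ref{sect:twinfo}) make $B$ an $R$-matrix: that $B$ be a unit, quasi-cocommutativity $B\Delta(h)B^{-1}=\Delta^{op}(h)$, and the two hexagon identities $(\Delta\ox 1)(B)=B_{13}B_{23}$ and $(1\ox\Delta)(B)=B_{13}B_{12}$. First I would pass to the Fourier picture: since $\Lambda$ is finite abelian, sending a group element $\lambda$ to the function $\chi\mapsto\chi(\lambda)$ identifies $\C[\Lambda]$ with the algebra $\mathrm{Fun}(\Lambda^\vee)$ of functions on $\Lambda^\vee$ under pointwise multiplication, under which $\Delta$ becomes pullback along the group multiplication $\Lambda^\vee\times\Lambda^\vee\to\Lambda^\vee$, i.e.\ $\Delta(f)(\chi,\psi)=f(\chi\psi)$, and $\epsilon(f)=f(1_{\Lambda^\vee})$. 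Likewise $\C[\Lambda]\ox\C[\Lambda]=\mathrm{Fun}(\Lambda^\vee\times\Lambda^\vee)$ and $\C[\Lambda]^{\ox 3}=\mathrm{Fun}\big((\Lambda^\vee)^3\big)$, and a bicharacter $B$ is by definition a function on $\Lambda^\vee\times\Lambda^\vee$ that is valued in $\C^\times$ and multiplicative in each of its two arguments separately.

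With this translation the conditions become transparent. Invertibility of $B$ is immediate, as $B$ is then a nowhere-vanishing function, hence a unit in $\mathrm{Fun}(\Lambda^\vee\times\Lambda^\vee)$ (indeed $B$ was already observed to be a twist of $\C[\Lambda]$). Quasi-cocommutativity is free: $\C[\Lambda]$ is cocommutative so $\Delta^{op}=\Delta$, and $\C[\Lambda]\ox\C[\Lambda]=\C[\Lambda\times\Lambda]$ is commutative because $\Lambda$ is abelian, so $B\Delta(h)=\Delta(h)B$ holds automatically. For the hexagons, evaluating at $(\chi,\chi',\psi)$ gives $(\Delta\ox 1)(B)(\chi,\chi',\psi)=B(\chi\chi',\psi)$ while $(B_{13}B_{23})(\chi,\chi',\psi)=B(\chi,\psi)B(\chi',\psi)$ (pointwise product in $\mathrm{Fun}\big((\Lambda^\vee)^3\big)$), so $(\Delta\ox 1)(B)=B_{13}B_{23}$ is precisely multiplicativity of $B$ in its first slot; symmetrically $(1\ox\Delta)(B)=B_{13}B_{12}$ amounts to multiplicativity in the second slot. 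The counit relations $(\epsilon\ox 1)(B)=B(1_{\Lambda^\vee},-)=1$ and $(1\ox\epsilon)(B)=B(-,1_{\Lambda^\vee})=1$ then follow as well, a character being trivial at the identity.

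I do not expect any genuine obstacle here; the lemma is a bookkeeping exercise, and is the ``easy direct check'' promised for the twin statement in Section~\ref{sect:twinfo}. The only points demanding a little care are setting up the Fourier identification correctly — in particular that $\Delta$ corresponds to pullback along the multiplication map $\Lambda^\vee\times\Lambda^\vee\to\Lambda^\vee$, which is exactly what makes the two hexagon relations line up with multiplicativity in the two separate variables — and keeping the tensor-leg labels straight, so that the asymmetry between $B_{13}B_{23}$ and $B_{13}B_{12}$ is reflected correctly.
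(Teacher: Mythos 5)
Your proposal is correct and is the same direct verification the paper performs (the paper's proof is one sentence: the two hexagons follow from $B$ being a bicharacter, and quasi-cocommutativity from $\Lambda$ being abelian). You simply unpack the check explicitly by passing to $\mathrm{Fun}(\Lambda^\vee)$, which is a clean way to see that the hexagons are multiplicativity of $B$ in each slot; nothing is missing or different in substance.
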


\begin{proof}
We need to check the equations $(\Delta\ox 1)(B)=B_{13}B_{23}$, $(1\ox \Delta)(B)=B_{13}B_{12}$, and $B\Delta(\lambda)B^{-1}=\Delta(\lambda)$ for each $\lambda\in \Lambda$.  The first two equations follow from the fact that $B$ is a bicharacter, and the final equation follows from the fact that $\Lambda$ is abelian.
\end{proof}

In the case of a bicharacter $B$ giving an $R$-matrix for $\C[\Lambda]$, the two maps $t_B$ and $t'_B$ restrict to, and are specified by, the standard group maps $\Lambda^\vee\to \Lambda$ induced by $B$.

\begin{definition}
Given a bicharacter $B\in\C[\Lambda]\ox\C[\Lambda]$, for $\Lambda$ a finite abelian group, let $\Lambda_{(r)}$ and $\Lambda_{(l)}$ denote the images $t_B(\Lambda^\vee)$ and $t'_B(\Lambda^\vee)$ in $\Lambda$ respectively.
\end{definition}

We have $B_{(r)}=\C[\Lambda_{(r)}]$ and $B_{(l)}=\C[\Lambda_{(l)}]$.

\section{Parabolic subalgebras in $u_q(\g)^J$ and Radford's subalgebras}
\label{sect:radford2}

For this section fix a Belavin-Drinfeld triple $(\Gamma_1,\Gamma_2,T)$ and solution $\S$ to~\eqref{eq:seq}.  Fix also $J=J_{T,\S}$ from Theorem~\ref{thm:twists}.
\par

We saw in Section~\ref{sect:lrsubalg} that there are algebra surjections $t_{R^J}:(u_q^J)^\ast\to R_{(r)}^J$ and $t'_{R^J}:(u^J_q)^\ast\to (R^J_{(l)})^{op}$.  Our main goal is to show that the map
\[
\mathrm{Irrep}(R_{(r)}^J)\to \mathrm{Irrep}\left((u_q^J)^\ast\right)
\]
induced by restriction is a bijection, modulo the action of a finite character group.  Both to understand the irreps of $R_{(r)}^J$ and to establish this proposed bijection we need to understand the subalgebra $R_{(r)}^J$, which we study here.
\par

For any root $\alpha$ we will take $\bar{\alpha}$ to be the component of $\alpha$ in $\L$, under the decomposition $\G=\L\times\L^\perp$.

\subsection{A preemptive change of coordinates}
Let us take
\[
\E_{\alpha}=q^{-\frac{1}{4}(\bar{\alpha},\bar{\alpha})}K^{1/2}_{\bar{\alpha}} E_\alpha\ \ \mathrm{and} \ \ \F_{\beta}=q^{-\frac{1}{4}(\bar{\beta},\bar{\beta})}K^{-1/2}_{\bar{\beta}}F_\beta.
\]
These new generators satisfy the appropriate relations so that we have an algebra automorphism
\[
\text{change of coord's}:u_q\overset{\cong}\to u_q,\ \ \left\{\begin{array}{l}
E_\alpha\mapsto \E_\alpha\\
F_{\beta}\mapsto \F_{\beta}\\
K_{\mu}\mapsto K_\mu.\end{array}\right.
\]
\par

Recall that each $E_\mu$, $\mu\in\Phi^+$, is a linear combination of permutations of the monomial $E_{\alpha_{i_1}}\dots E_{\alpha_{i_m}}$, where $\mu=\alpha_{i_1}+\dots+\alpha_{i_m}$ with the $\alpha_{i_k}$ simple.  So each $E_\mu$ is sent to $q^{-\frac{1}{4}(\bar{\mu},\bar{\mu})}K^{1/2}_{\bar{\mu}} E_\mu$ under the above change of coordinates.  A similar statement holds for the $F_\nu$, and we may adopt a consistent notation
\[
\E_\mu=q^{-\frac{1}{4}(\bar{\mu},\bar{\mu})}K^{1/2}_{\bar{\mu}} E_\mu\ \ \mathrm{and}\ \ \F_\nu=q^{-\frac{1}{4}(\bar{\nu},\bar{\nu})}K^{-1/2}_{\bar{\nu}}F_\nu,
\]
for $\mu,\nu\in\Phi^+$.  These bold elements produce a $\C[G]$-basis for $u_q=u_q^J$ just as in Lemma~\ref{thm:basis}.

\subsection{The quantum parabolics in $u_q(\g)^J$ and the $R$-matrix}

For a fixed subset $\Sigma\subset \Gamma$ we let $\p_+=\p_+(\Sigma)$ denote the corresponding positive parabolic in $\g$ and $u_q(\p_+)$ denote the Hopf subalgebra
\[
u_q(\p_+)=\C\langle G,\E_{\alpha},\F_\beta:\alpha\in \Gamma,\ \beta\in \Sigma\rangle\ \subset\ u_q(\g).
\]
We have the negative analog
\[
u_q(\p_-)=\C\langle G,\E_{\beta},\F_\alpha:\beta\in \Sigma, \alpha\in \Gamma\rangle\ \subset\ u_q(\g).
\]
We let $u_q(\p^{ss})$ denote the small quantum group associated to the (union of) Dynkin diagram(s) $\Sigma$ in $\Gamma$, and suppose that the perpendicular $G_{\Sigma}^\perp$ to the subgroup $G_{\Sigma}=\mathbb{Z}/l\mathbb{Z}\cdot \{K_{\beta}:\beta\in\Sigma\}$ in $G$ is a complement to $G_{\Sigma}$.

\begin{lemma}\label{lem:parbol}
Let $\Sigma$ be a subset in $\Gamma$ and $\p$ denote the corresponding positive (resp. negative) parabolic.  There is an algebra surjection
\begin{equation}\label{eq:710}
u_q(\p)\to \C[G_{\Sigma}^\perp]\ox u_q(\p^{ss}),\ \ \left\{\begin{array}{ll}
\E_\beta\mapsto \E_\beta &\mathrm{when\ }\beta\in\Sigma\\
\F_\beta\mapsto \F_\beta &\mathrm{when\ }\beta\in\Sigma\\
\E_\alpha\ (\text{resp. }\F_\alpha)\mapsto 0 &\mathrm{when\ }\alpha\in \Gamma-\Sigma\\
K_\gamma\mapsto K_\gamma
\end{array}\right.
\end{equation}
with kernel equal to the nilpotent ideal $N=(\E_\alpha:\alpha\in \Gamma-\Sigma)$ (resp. $N'=(\F_\alpha:\alpha\in\Gamma-\Sigma)$).
\end{lemma}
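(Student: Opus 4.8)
The plan is to construct the map~\eqref{eq:710} directly via generators and relations, then identify the kernel by a dimension count using the standard basis from Theorem~\ref{thm:basis}. First I would treat the positive case; the negative case is symmetric. Recall that $u_q(\p^{ss})$ is the small quantum group on the diagram $\Sigma$, with grouplikes $G(u_q(\p^{ss}))\cong \mathbb{Z}/l\mathbb{Z}\cdot\{K_\beta:\beta\in\Sigma\}$. I would first verify that the assignment
\[
\E_\beta\mapsto \E_\beta,\ \F_\beta\mapsto \F_\beta\ (\beta\in\Sigma),\quad \E_\alpha\mapsto 0\ (\alpha\in\Gamma-\Sigma),\quad K_\gamma\mapsto K_\gamma,
\]
is compatible with all the defining relations of $u_q(\p_+)$. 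The commutation relations among the $K$'s and between $K$'s and $\E,\F$ are preserved because the form is respected; the relations $[\E_\alpha,\F_\beta]=\delta_{\alpha,\beta}(K_\alpha-K_\alpha^{-1})/(q-q^{-1})$ for $\alpha\in\Gamma$, $\beta\in\Sigma$ hold after applying the map since either $\alpha=\beta\in\Sigma$ (and the relation is the defining relation of $u_q(\p^{ss})$, noting that the change of coordinates acts trivially on grouplikes and only rescales by a grouplike that is absorbed into the Serre-type relations consistently) or $\alpha\neq\beta$ so both sides vanish; the Serre relations and the truncation relations $\E_\mu^l=0$, $K_\gamma^l=1$ are clearly preserved (the latter two because $E_\mu\mapsto 0$ whenever $\mu$ involves a root of $\Gamma-\Sigma$, and $E_\mu\mapsto \E_\mu$ up to a grouplike scalar when $\mu\in\Phi^+(\Sigma)$). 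One subtlety worth spelling out: a root $\mu\in\Phi^+$ lies in the span of $\Sigma$ iff all simple summands of $\mu$ lie in $\Sigma$, by homogeneity of $\E_\mu$; for any other $\mu$, $\E_\mu$ maps into the ideal $N$, so the truncation $\E_\mu^l=0$ is automatic in the target.

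Having the algebra map $\pi\colon u_q(\p_+)\to \C[G_\Sigma^\perp]\ox u_q(\p^{ss})$, surjectivity is immediate since all the generators $\E_\beta,\F_\beta,K_\gamma$ of the target are hit (using the splitting $G=G_\Sigma\times G_\Sigma^\perp$ so that $\C[G]\twoheadrightarrow \C[G_\Sigma^\perp]\ox \C[G_\Sigma]$, and $\C[G_\Sigma]\hookrightarrow u_q(\p^{ss})$). The nilpotent ideal $N=(\E_\alpha:\alpha\in\Gamma-\Sigma)$ visibly lies in $\ker\pi$. For the reverse containment I would compare dimensions. Using the bold $\C[G]$-basis of $u_q(\p_+)$ (which is $\{(\prod_{\nu\in\Phi^+(\Sigma)}\F_\nu^{m_\nu})(\prod_{\mu\in\Phi^+}\E_\mu^{n_\mu})\}$ over $\C[G]$, with $0\le m_\nu,n_\mu\le l-1$), the quotient $u_q(\p_+)/N$ has a $\C$-basis indexed by $G$-exponents together with $\{m_\nu\}_{\nu\in\Phi^+(\Sigma)}$ and $\{n_\mu\}_{\mu\in\Phi^+(\Sigma)}$ only — because modulo $N$ every $\E_\mu$ with $\mu\notin\operatorname{span}(\Sigma)$ dies, and the PBW straightening keeps everything in that reduced set. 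Counting, $\dim_{\C}(u_q(\p_+)/N)=|G|\cdot l^{2|\Phi^+(\Sigma)|}/|G_\Sigma|$... more precisely $|G_\Sigma^\perp|\cdot|G_\Sigma|\cdot l^{2|\Phi^+(\Sigma)|}/(\text{the }|G_\Sigma|\text{ overcounting})$; organizing this carefully gives exactly $\dim_\C\bigl(\C[G_\Sigma^\perp]\ox u_q(\p^{ss})\bigr)=|G_\Sigma^\perp|\cdot |G_\Sigma|\cdot l^{2|\Phi^+(\Sigma)|}$...

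Let me restate that count cleanly: $\dim u_q(\p^{ss}) = |G_\Sigma|\cdot l^{2|\Phi^+(\Sigma)|}$ where $\Phi^+(\Sigma)$ is the positive subsystem spanned by $\Sigma$, so $\dim\bigl(\C[G_\Sigma^\perp]\ox u_q(\p^{ss})\bigr)=|G_\Sigma^\perp|\cdot|G_\Sigma|\cdot l^{2|\Phi^+(\Sigma)|}=|G|\cdot l^{2|\Phi^+(\Sigma)|}$, using $|G_\Sigma^\perp||G_\Sigma|=|G|$. On the other hand $u_q(\p_+)/N$ is spanned over $\C$ by the images of the bold basis elements with all $\E_\mu$, $\mu\notin\operatorname{span}\Sigma$, and all $\F_\nu$, $\nu\notin\Phi^+(\Sigma)$ — wait, $u_q(\p_+)$ only contains $\F_\beta$ for $\beta\in\Sigma$, so the negative part is already $u^-(\Sigma)$ of dimension $l^{|\Phi^+(\Sigma)|}$ — so the spanning set has size $|G|\cdot l^{|\Phi^+(\Sigma)|}\cdot l^{|\Phi^+(\Sigma)|}=|G|\cdot l^{2|\Phi^+(\Sigma)|}$, matching the target dimension. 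Since $\pi$ is a surjection from a space of dimension $\le$ that of the target onto the target, $\pi$ restricted to $u_q(\p_+)/N$ is an isomorphism, forcing $\ker\pi=N$.

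\textbf{Main obstacle.} The genuinely delicate point is the bookkeeping in the dimension count, in particular confirming that $N=(\E_\alpha:\alpha\in\Gamma-\Sigma)$ really does kill precisely the bold basis monomials involving some $\E_\mu$ with $\mu\notin\operatorname{span}(\Sigma)$ and nothing more — i.e. that the straightening relations never create cancellations allowing $u_q(\p_+)/N$ to be smaller than the naive count, and conversely that the reduced monomials remain linearly independent in the quotient. This is handled by observing that $N$ is a $G$-stable, root-graded ideal and that the associated graded (with respect to the $\Phi^+$-filtration underlying PBW) makes $u_q(\p_+)$ into a free module over the "unwanted" $\E_\mu$'s, so $u_q(\p_+)/N$ is free with the expected reduced basis; then the surjection $\pi$ between two spaces of equal dimension is forced to be an isomorphism, giving $\ker\pi=N$. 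The negative-parabolic statement follows by applying the algebra anti-automorphism (or the evident symmetry) swapping $\E\leftrightarrow\F$, $\Gamma-\Sigma$ playing the same role.
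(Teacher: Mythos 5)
Your approach — build the map, kill $N$, and count dimensions — is the same core strategy as the paper's, and it works, but the paper builds the map in the \emph{opposite} direction, and that choice is not merely cosmetic. The paper constructs the algebra map $\C[G_\Sigma^\perp]\ox u_q(\p^{ss})\to u_q(\p)/N$ and then shows injectivity by counting; this way one only needs to verify that the generators of $\C[G_\Sigma^\perp]\ox u_q(\p^{ss})$ map to elements satisfying the relations of that algebra, which is straightforward because those relations live entirely inside $\Sigma$ and $G_\Sigma^\perp$. Going in your direction, $\pi:u_q(\p)\to\C[G_\Sigma^\perp]\ox u_q(\p^{ss})$, you must also dispose of the truncation relations $\E_\mu^l=0$ for \emph{every} $\mu\in\Phi^+$, including composite roots $\mu$ involving both $\Sigma$ and $\Gamma-\Sigma$; your parenthetical remark (``$E_\mu\mapsto 0$ whenever $\mu$ involves a root of $\Gamma-\Sigma$'') is the right observation, but it is doing real work and deserves a clean argument (homogeneity of $\E_\mu$, as in Theorem~\ref{thm:basis}, is the ingredient). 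Also note that for $\mu\in\Phi^+(\Sigma)$, Lusztig's $E_\mu$ in $u_q(\g)$ and in $u_q(\p^{ss})$ are built from possibly different reduced expressions, so ``$\E_\mu\mapsto\E_\mu$ up to a grouplike'' is not literally true; what you actually need is only that $\pi(\E_\mu)^l=0$ in the target, which holds because the image lies in $u_q(\p^{ss})$, which is finite-dimensional and already satisfies its own truncation.

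Two further points. First, your ``main obstacle'' paragraph about straightening and cancellation is superfluous: once $N\subset\ker\pi$, $\pi$ surjective, and the spanning set bound $\dim(u_q(\p)/N)\le|G|\cdot l^{2|\Phi^+(\Sigma)|}$ are in hand, the dimension forces equality and $\ker\pi=N$, with no need to argue that $N$ kills ``precisely'' the claimed monomials. However, that spanning-set bound itself relies on $u_q(\p)$ having the $\C[G]$-basis of ordered monomials you quote; the paper establishes this carefully via the triangular decomposition $u(\p)_+\ox_{\C[G]}u(\p)_-\to u_q(\p)$ (flatness over $\C[G]$), and you should not take it for granted. Second, the lemma asserts $N$ is nilpotent, and your proof never addresses this; the paper's one-line argument is to grade by $\mathbb{Z}\{\Gamma-\Sigma\}$ so that $N^k$ lies in degrees $\geq k$, while $u_q(\p)$ is finite-dimensional — this should be added.
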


\begin{proof}
It suffices to prove the result for the positive parabolic.  We arrive at the result for the negative quantum parabolic by considering the automorphism of $u_q(\g)$ which exchanges the $E_\alpha$ and $F_\alpha$, and inverts the $K_\gamma$, and hence exchanges the positive and negative parabolics.  Simply by checking relations we see that there is a surjective algebra map $\C[G_{\Sigma}^\perp]\ox u_q(\p^{ss})\to u_q(\p)/N$ defined on the generators in the obvious way.  We will show that this map is injective by counting dimensions.
\par

We have that the nonnegative part of $u_q(\p)$ is all of $u_+$, and we see for grading reasons that $u_q(\p)_-$ is free over $\C[G]$ with basis given by ordered monomials in the $\F_{\nu}$ with $\nu$ a positive root in the $\mathbb{Z}$-span on $\Sigma$ (see Theorem~\ref{thm:basis}).  By the commutativity relation between the $\E$ and $\F$ we see that the restriction of the multiplication map $\theta:u(\p)_+\ox_{\C[G]}u(\p)_-\to u_q(\p)$ is surjective.  Since this map is given by restricting the isomorphism $u_q(\g)_+\ox_{\C[G]} u_q(\g)_-\to u_q(\g)$, and since all modules are flat over $\C[G]$, we see that $\theta$ is injective as well.  It follows that $u_q(\p)$ has the obvious basis consisting of ordered monomials in the $\E_\mu$ and $\F_\nu$, where $\nu$ is as above.
\par

When we take the quotient we now see that $u_q(\p)/N$ has a $\C[G]$-basis of orderend monomials in the $\E_{\mu'}$ and $\F_{\nu}$, with $\mu',\nu\in\Phi^+\cap(\mathbb{Z}\cdot \Sigma)$.  Since $\Psi=\Phi\cap (\mathbb{Z}\cdot \Sigma)$ is the root system for $\p^{ss}$, we find by Lusztig's basis for $u_q(\p^{ss})$ that $u_q(\p)/N$ and $\C[G_{\Sigma}^\perp]\ox u_q(\p^{ss})$ have the same dimension.  Whence our surjection is an isomorphism.  The inverse is given by the same formulas as~\eqref{eq:710}, and implies the existence of~\eqref{eq:710}.
\par

As for nilpotence of $N$, when we grade by the group $\mathbb{Z}\{\Gamma-\Sigma\}$ we see that $N^k$ lay in degrees $\mathbb{Z}_{\geq k}\{\Gamma-\Sigma\}$.  Since $u_q(\p)$ is finite dimensional it has no nonzero elements in degrees $\mathbb{Z}_{\geq k}\{\Gamma-\Sigma\}$ for large $k$.
\end{proof}

\subsection{Quantum parabolics and BD triples}

\begin{definition}
For any Belavin-Drinfeld triple $(\Gamma_1,\Gamma_2,T)$ we let $u_q(\p_1)$ and $u_q(\p_2)$ denote the positive and negative quantum parabolics in $u_q(\g)$ corresponding to $\Gamma_1$ and $\Gamma_2$ respectively.
\end{definition}

So $u_q(\p_1)$ contains only the $\F_\alpha$ with $\alpha\in \Gamma_1$, and $u_q(\p_2)$ contains only the $\E_\beta$ with $\beta\in \Gamma_2$.  Recall our twist $J=J_{T,\S}$ and the definition $R^J=J_{21}^{-1}RJ$.  We have also
\begin{equation}\label{eq:J21-1}
\begin{array}{l}
J_{21}^{-1}\\
=(1\ox T)(\Omega)\dots (1\ox T^n)(\Omega)\S^{-1}\Omega_{L^\perp}^{1/2}(S^{-1}\ox T^n_+)(R_{21})\dots(S^{-1}\ox T_+)(R_{21})\\
=(T^{-1}\ox 1)(\Omega)\dots (T^{-n}\ox 1)(\Omega)\S^{-1}\Omega_{L^\perp}^{1/2}(T_-^n\ox S)(R_{21})\dots(T_-\ox S)(R_{21}).
\end{array}
\end{equation}

\begin{lemma}
There are containments $R^J_{(r)}\subset u_q(\p_2)$ and $R^J_{(l)}\subset u_q(\p_2)$.
\end{lemma}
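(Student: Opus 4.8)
The plan is to read both containments off the multiplicative shape of $R^{J}=J_{21}^{-1}RJ$, so the first task is to record how deep into the quantum parabolics each of $J$, $R$, and $J_{21}^{-1}$ reaches. The basic observations are that $\mathrm{im}(T_{+})$ is the Hopf subalgebra $u^{\Gamma_{2}}_{+}:=\C\langle G,\E_{\beta}:\beta\in\Gamma_{2}\rangle$, that $\mathrm{im}(T_{-})$ is $u^{\Gamma_{1}}_{-}:=\C\langle G,\F_{\alpha}:\alpha\in\Gamma_{1}\rangle$, and that both of these lie inside $u_{q}(\p_{2})$ (for $u^{\Gamma_{2}}_{+}$ this is immediate, and $u^{\Gamma_{1}}_{-}\subseteq u_{-}\subseteq u_{q}(\p_{2})$). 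Using the identity $(T_{+}\ox 1)(R)=(1\ox T_{-})(R)$ and its iterates $(T^{k}_{+}\ox 1)(R)=(1\ox T^{k}_{-})(R)$, together with the fact that each $T^{k}_{\pm}$ is a Hopf map, every factor $(T^{k}_{+}\ox 1)(R)$ of $J$ has left leg in $\mathrm{im}(T^{k}_{+})\subseteq u^{\Gamma_{2}}_{+}$ and right leg in $\mathrm{im}(T^{k}_{-})\subseteq u^{\Gamma_{1}}_{-}$, hence lies in $u^{\Gamma_{2}}_{+}\ox u^{\Gamma_{1}}_{-}$; the remaining tail of $J$ is grouplike, hence in $\C[G]\ox\C[G]$; since these are all subalgebras, $J\in u^{\Gamma_{2}}_{+}\ox u^{\Gamma_{1}}_{-}$. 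Reading the two presentations of $J_{21}^{-1}$ in \eqref{eq:J21-1} the same way — the first yields $J_{21}^{-1}\in u^{\Gamma_{1}}_{-}\ox u_{+}$ and the second $J_{21}^{-1}\in u_{-}\ox u^{\Gamma_{2}}_{+}$, after using that $S$ preserves $u_{+}$ and $u_{-}$ — and intersecting these inside $u_{-}\ox u_{+}$ by means of a triangular (PBW) basis, one gets $J_{21}^{-1}\in u^{\Gamma_{1}}_{-}\ox u^{\Gamma_{2}}_{+}$.

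Since $R\in u_{+}\ox u_{-}$, it follows that
\[
R^{J}=J_{21}^{-1}RJ\in\bigl(u^{\Gamma_{1}}_{-}\ox u^{\Gamma_{2}}_{+}\bigr)\bigl(u_{+}\ox u_{-}\bigr)\bigl(u^{\Gamma_{2}}_{+}\ox u^{\Gamma_{1}}_{-}\bigr)=\bigl(u^{\Gamma_{1}}_{-}u_{+}u^{\Gamma_{2}}_{+}\bigr)\ox\bigl(u^{\Gamma_{2}}_{+}u_{-}u^{\Gamma_{1}}_{-}\bigr).
\]
As $R^{J}_{(r)}$ is the span of the right tensor-legs of $R^{J}$, and as $u^{\Gamma_{2}}_{+}u_{-}u^{\Gamma_{1}}_{-}=u^{\Gamma_{2}}_{+}u_{-}$ equals, by the triangular decomposition, the subalgebra $\C\langle G,\E_{\beta},\F_{\alpha}:\beta\in\Gamma_{2},\ \alpha\in\Gamma\rangle=u_{q}(\p_{2})$, the first containment $R^{J}_{(r)}\subset u_{q}(\p_{2})$ falls out immediately.

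The containment $R^{J}_{(l)}\subset u_{q}(\p_{2})$ is the subtler half, and it is where I expect the real difficulty. The factorization above places the left legs only in $u^{\Gamma_{1}}_{-}u_{+}u^{\Gamma_{2}}_{+}$, which a priori is the larger algebra $\C\langle G,\E_{\alpha},\F_{\beta}:\alpha\in\Gamma,\ \beta\in\Gamma_{1}\rangle=u_{q}(\p_{1})$; cutting this down to $u_{q}(\p_{2})$ forces one to exploit genuine cancellation inside $R^{J}$, since $R$ is the one factor of $R^{J}=J_{21}^{-1}RJ$ that does not a priori live in $u_{q}(\p_{2})\ox u_{q}(\p_{2})$, and what must be checked is that conjugating $R$ by $J$ removes every $\E_{\alpha}$ with $\alpha\notin\Gamma_{2}$ from the left leg. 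My proposal for this is to unwind $R^{J}$ through the $2$-component ABRR equations $A^{2}_{L}(J)=A^{2}_{R}(J)=J$: because these iterate $T_{+}$ (respectively $T_{-}$), expanding $R^{J}$ by them should present it as a product of factors each assembled from $T^{k}_{+}$- and $T^{k}_{-}$-images — hence from $u^{\Gamma_{2}}_{+}$ and $u^{\Gamma_{1}}_{-}$, which both sit inside $u_{q}(\p_{2})$ — together with the grouplike Cartan corrections $\S^{\pm 1}$, $\Omega_{L^{\perp}}^{\pm 1/2}$ and $(T^{j}\ox 1)(\Omega)^{\pm 1}$, with the disappearance of the offending $\E_{\alpha}$ from the left leg then forced by the nilpotence $T^{n}_{+}|I_{+}=0$. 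Carrying out this rewriting, and the bookkeeping needed to confirm that the Cartan corrections spoil neither containment, is the step I expect to be the main obstacle; it is also exactly where the explicit form of $R^{J}$ established later (Section~\ref{sect:proof}) does the real work.
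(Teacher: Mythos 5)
Your first paragraph and the deduction of $R^J_{(r)}\subset u_q(\p_2)$ are correct, and this is exactly the paper's (one-line) argument: the identity $(T_+^k\ox 1)(R)=(1\ox T_-^k)(R)$ places $J$ in $u^{\Gamma_2}_+\ox u^{\Gamma_1}_-$ and $J_{21}^{-1}$ in $u^{\Gamma_1}_-\ox u^{\Gamma_2}_+$ (your intersection argument via \eqref{eq:J21-1} works, though one can also just note that the inverse of a unit lying in a finite-dimensional subalgebra again lies in that subalgebra), whence $R^J=J_{21}^{-1}RJ\in u_q(\p_1)\ox u_q(\p_2)$.

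The second half of your proposal, however, is aimed at a statement that is false as printed. The containment $R^J_{(l)}\subset u_q(\p_2)$ is a typo: the intended claim, and the one actually used in Proposition~\ref{prop:R} and Corollary~\ref{cor:R}, is $R^J_{(l)}\subset u_q(\p_1)$. Your own factorization $R^J\in\bigl(u^{\Gamma_1}_-u_+u^{\Gamma_2}_+\bigr)\ox\bigl(u^{\Gamma_2}_+u_-u^{\Gamma_1}_-\bigr)\subseteq u_q(\p_1)\ox u_q(\p_2)$ already proves this, since the left tensor legs land in $u_q(\p_1)=\C\langle G,\E_\gamma,\F_\alpha:\gamma\in\Gamma,\ \alpha\in\Gamma_1\rangle$; there is nothing ``subtler'' left to do. The cancellation program you sketch --- showing that conjugation by $J$ removes every $\E_\gamma$ with $\gamma\notin\Gamma_2$ from the left leg --- cannot succeed: Proposition~\ref{prop:R} computes $R^J_{(l)}=\C\langle G_{(l)},\E_\gamma,\F_\alpha:\alpha\in\Gamma_1,\ \gamma\in\Gamma\rangle$, which contains $\E_\gamma$ for \emph{every} simple root $\gamma$, so $R^J_{(l)}\not\subset u_q(\p_2)$ whenever $\Gamma_2\neq\Gamma$, which is always the case by the nilpotence condition on $T$. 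You should therefore drop the appeal to the ABRR equations and to Section~\ref{sect:proof} and simply record the containment $R^J_{(l)}\subset u_q(\p_1)$ that your displayed computation gives.
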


\begin{proof}
This is immediate from the form of $J$ and $R$, and the fact that $(T_+^k\ox 1)(R)=(1\ox T_-^k)(R)$.
\end{proof}

We consider $\C[G]$ as a quasitriangular Hopf algebra with $R$-matrix $\Omega$.  Then $\S^{-1}$ provides a twist for $\C[G]$ and the new $R$-matrix $\S_{21}\Omega\S^{-1}=\S^{-2}\Omega$.  We take $G_{(r)}$ and $G_{(l)}$ the right and left subgroups associated to $\S^{-2}\Omega$, as in Section~\ref{sect:bchar}.  Note that by the duality $t_{\Omega^{\S^{-1}}}:G_{(l)}^\vee\overset{\cong}\to G_{(r)}$ we know that these two groups have the same order.  We want to prove

\begin{proposition}\label{prop:R}
The inclusions $R^J_{(r)}\subset u_q(\p_2)$ and $R^J_{(l)}\subset u_q(\p_1)$ are equalities exactly when $G_{(r)}=G_{(l)}=G$.  In general, we have that
\[
R^J_{(r)}=\C\langle G_{(r)},\E_\beta,\F_\gamma:\beta\in \Gamma_2,\gamma\in \Gamma\rangle
\]
and
\[
R^J_{(l)}=\C\langle G_{(l)},\E_\gamma,\F_\alpha:\alpha\in \Gamma_1,\gamma\in \Gamma\rangle
\]
in $u_q(\g)$.
\end{proposition}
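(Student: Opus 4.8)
The plan is to compute $R^J=J_{21}^{-1}RJ$ explicitly enough to read off its Radford subalgebras, using the elementary observation that if one writes $R^J=\sum_i x_i\ox y_i$ with the $x_i$ (resp.\ the $y_i$) linearly independent, then $R^J_{(l)}=\mathrm{span}\{x_i\}$ and $R^J_{(r)}=\mathrm{span}\{y_i\}$. By the lemma above we already have $R^J_{(r)}\subseteq u_q(\p_2)$ and $R^J_{(l)}\subseteq u_q(\p_1)$, and by Radford's results recalled in Section~\ref{sect:lrsubalg} (see also \cite{radford93}) these are Hopf subalgebras. Since the proposed answers differ from $u_q(\p_2)$ and $u_q(\p_1)$ only in their Cartan parts, and since $G_{(l)}$ and $G_{(r)}$ have the same order (as recorded above), so that $G_{(l)}=G$ if and only if $G_{(r)}=G$, the ``exactly when'' clause is immediate from the general description. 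Thus the content is: (i) identify the Cartan (root-lattice degree zero) parts of $R^J_{(r)}$ and $R^J_{(l)}$, and (ii) verify that the remaining generators actually occur among the right, resp.\ left, components of $R^J$.

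For (i), I would compute the component $(R^J)_{0,0}$ of $R^J$ in $\C[G]\ox\C[G]$. In the product $J_{21}^{-1}RJ$ the Cartan telescopes $(T^{k}\ox1)(\Omega)^{-1}$ appearing in $J$, and $(T^{-k}\ox1)(\Omega)$ appearing in $J_{21}^{-1}$, commute with all of $\C[G]\ox\C[G]$ and cancel against their partners, leaving
\[
(R^J)_{0,0}=\S^{-1}\Omega_{L^\perp}^{1/2}\cdot\Omega\cdot\S^{-1}\Omega_{L^\perp}^{-1/2}=\S^{-2}\Omega .
\]
This is precisely the $R$-matrix on $\C[G]$ obtained by twisting $\Omega$ by the antisymmetric bicharacter $\S^{-1}$, so by the definitions of Section~\ref{sect:bchar} its associated right and left subgroups are $G_{(r)}$ and $G_{(l)}$. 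Since a degree zero functional on $R^J$ only sees $(R^J)_{0,0}$, the Cartan parts of $R^J_{(r)}$ and $R^J_{(l)}$ are $\C[G_{(r)}]$ and $\C[G_{(l)}]$ respectively.

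For (ii), I would read off the low weight pieces of $R^J$ using the change of coordinates $\E_\mu=q^{-\frac{1}{4}(\bar{\mu},\bar{\mu})}K^{1/2}_{\bar{\mu}}E_\mu$, $\F_\nu=q^{-\frac{1}{4}(\bar{\nu},\bar{\nu})}K^{-1/2}_{\bar{\nu}}F_\nu$. In bidegree $(\alpha,-\alpha)$, $\alpha\in\Gamma$, the middle factor $R$ contributes the term proportional to $E_\alpha\ox F_\alpha$; after absorbing the correction factors $q^{-\frac{1}{4}(\bar{\alpha},\bar{\alpha})}K^{\pm1/2}_{\bar{\alpha}}$ and the Cartan data surrounding it, this exhibits $\F_\alpha$, up to a unit of $\C[G_{(r)}]$, as a right component, and $\E_\alpha$, up to a unit of $\C[G_{(l)}]$, as a left component. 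For $\beta=T\alpha\in\Gamma_2$ the left factor of bidegree $(\beta,-\alpha)$ in $(T_+\ox1)(R)\subseteq J$ is proportional to $E_\beta\ox F_\alpha$, which after normalization exhibits $\E_\beta$ (up to a unit) as a left component; dually, isolating the relevant bidegree using the $S$-image of the Lusztig generators in $J_{21}^{-1}$ exhibits, for $\alpha\in\Gamma_1$, $\F_\alpha$ as a left component. Together with the containments above and part (i), this gives
\[
R^J_{(r)}=\C\langle G_{(r)},\E_\beta,\F_\gamma:\beta\in\Gamma_2,\gamma\in\Gamma\rangle,\qquad
R^J_{(l)}=\C\langle G_{(l)},\E_\gamma,\F_\alpha:\alpha\in\Gamma_1,\gamma\in\Gamma\rangle.
\]

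The main obstacle is making step (ii) rigorous, which amounts to carrying out the full multiplication $J_{21}^{-1}RJ$ in Lusztig's PBW basis and checking two things at once: that no $\E_\alpha$ with $\alpha\notin\Gamma_2$ survives on the right and no $\F_\gamma$ with $\gamma\notin\Gamma_1$ survives on the left (the content of the lemma above), and, crucially, that the grouplike coefficient attached to each surviving PBW monomial lies in $\C[G_{(l)}]\ox\C[G_{(r)}]$ rather than merely in $\C[G]\ox\C[G]$. The latter is the genuine point; it is forced by equation~\eqref{eq:seq}, exactly as in the degree zero computation of (i), and the change of coordinates is introduced precisely so that these coefficients come out cleanly. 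Once this is established, $R^J$ manifestly lies in $\C\langle G_{(l)},\E_\gamma,\F_\alpha:\alpha\in\Gamma_1,\gamma\in\Gamma\rangle\ox\C\langle G_{(r)},\E_\beta,\F_\gamma:\beta\in\Gamma_2,\gamma\in\Gamma\rangle$ and visibly spans each tensor factor, which yields both inclusions and completes the proof.
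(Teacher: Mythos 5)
Your overall strategy is the same as the paper's: compute the Cartan part $(R^J)_{0,0}=\S^{-2}\Omega$ to identify $G_{(r)}$ and $G_{(l)}$, then extract the remaining generators from the low-degree pieces of $R^J$, and finally argue the reverse containment by tracking the Cartan coefficients attached to PBW monomials. The Cartan computation in step (i) is correct and matches the paper's Lemma~\ref{lem:L1}. But steps (ii) and the reverse containment are only sketched, and as written step (ii) has a genuine error: your bidegree-$(\beta,-\alpha)$ extraction from $(T_+\ox 1)(R)\subset J$ places $E_\beta$ in the \emph{left} tensor slot and hence produces $\E_\beta$ as an element of $R^J_{(l)}$, which is already covered by the bidegree-$(\alpha,-\alpha)$ case (since $\Gamma_2\subset\Gamma$). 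What the proposition actually requires, and what you never address, is $\E_\beta\in R^J_{(r)}$ for $\beta\in\Gamma_2$ — these $E$-type generators must appear in the \emph{right} slot, and they come from $J_{21}^{-1}$ (whose right factor carries $T_+^k$-images of $E$'s or $S$-images of $E$'s depending on which of the two expressions in~\eqref{eq:J21-1} you use), not from $J$.

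A second gap is that even in the cases you do address, a single bidegree projection does not cleanly isolate a single surviving term. The paper instead uses the PBW projections $\pi^E_\beta:u_q\to\C[G]E_\beta$, and even so obtains, for $\beta\in\Gamma_2$, a \emph{sum} $\sum_{k=0}^{m(\beta)}c_k\,\F_{T^{-k}\beta}$ with $c_k\in\C[G_{(r)}]$ rather than a lone generator, forcing an induction on the "distance" $m(\beta)$ from $\Gamma-\Gamma_2$ to peel off each $\F_\gamma$ one at a time. You have not done this, and without it the forward inclusion is unproven. Finally, you correctly identify the crux of the reverse containment — that the grouplike coefficient attached to each surviving PBW monomial of $R^J$ lies in $\C[G_{(l)}]\ox\C[G_{(r)}]$ and not merely in $\C[G]\ox\C[G]$ — but you only assert that this "is forced by equation~\eqref{eq:seq}." The paper carries this out by expanding $R^J$ in monomials $M_1M_2M_3$ and systematically commuting the bicharacters leftward via Lemma~\ref{lem:868}; that calculation is the substance of Part~II of the proof and cannot be dispensed with by invocation.
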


Section~\ref{sect:proof} is dedicated to a proof of Proposition~\ref{prop:R}.  As a corollary we will have

\begin{corollary}\label{cor:R}
Let $\N\subset R^J_{(r)}$ denote the preimage of the ideal $N=(\F_{\beta}:\beta\in\Gamma-\Gamma_2)$ in $u_q(\p_2)$ along the inclusion $R^J_{(r)}\to u_q(\p_2)$.  Take also $\Lambda=G_{(r)}\cap G_2^\perp$.  Then we have a canonical algebra isomorphism
\[
R^J_{(r)}/\N\overset{\cong}\to \C[\Lambda]\ox u_q(\p_2^{ss}).
\]
For the analogously defined $\N'\subset R^J_{(l)}$ and $\Lambda'\subset G_{(l)}$ we have also
\[
R^J_{(l)}/\N'\overset{\cong}\to \C[\Lambda']\ox u_q(\p_1^{ss}).
\]
\end{corollary}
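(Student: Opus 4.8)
The plan is to obtain the isomorphism by restricting the parabolic quotient map of Lemma~\ref{lem:parbol} to the Radford subalgebra, and then reading off its image from the explicit presentation of $R^J_{(r)}$ provided by Proposition~\ref{prop:R}. I will describe the argument for $R^J_{(r)}$; the statement for $R^J_{(l)}$ is obtained by running the identical argument with $\Gamma_1$ and the positive parabolic $\p_1$ in place of $\Gamma_2$ and $\p_2$, using the corresponding description of $R^J_{(l)}$ in Proposition~\ref{prop:R}.

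First I would apply Lemma~\ref{lem:parbol} with $\Sigma=\Gamma_2$ and $\p$ the negative parabolic $\p_2$, which produces an algebra surjection $\pi\colon u_q(\p_2)\to \C[G_2^\perp]\ox u_q(\p_2^{ss})$ with kernel $N=(\F_\beta:\beta\in\Gamma-\Gamma_2)$; here $G_{\Gamma_2}=G_2$, and $G_2^\perp$ is a complement by the running assumptions on $l$. Composing with the inclusion $R^J_{(r)}\hookrightarrow u_q(\p_2)$ of Proposition~\ref{prop:R} gives an algebra map $\bar\pi$ whose kernel is precisely $R^J_{(r)}\cap N=\N$, so $\bar\pi$ descends to an injection $R^J_{(r)}/\N\hookrightarrow \C[G_2^\perp]\ox u_q(\p_2^{ss})$. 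It remains to compute the image of $\bar\pi$.

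For this I would use the generators of $R^J_{(r)}$ furnished by Proposition~\ref{prop:R}: the subalgebra is generated by $\C[G_{(r)}]$, the $\E_\beta$ with $\beta\in\Gamma_2$, and the $\F_\gamma$ with $\gamma\in\Gamma$. Under $\bar\pi$ the elements $\F_\gamma$ with $\gamma\notin\Gamma_2$ are killed, while $\E_\beta\mapsto \E_\beta$ and $\F_\beta\mapsto \F_\beta$ for $\beta\in\Gamma_2$. Since $[\E_\beta,\F_\beta]=(K_\beta-K_\beta^{-1})/(q-q^{-1})$ and $l$ is odd, so that $K_\beta-K_\beta^{-1}$ generates $\C[\langle K_\beta\rangle]$ (the map $\zeta\mapsto\zeta-\zeta^{-1}$ being injective on $l$-th roots of unity), the images of $\E_\beta,\F_\beta$ generate all of $u_q(\p_2^{ss})$, grouplikes included. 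Meanwhile $\C[G_{(r)}]$ maps onto $\C[\mathrm{pr}(G_{(r)})]$, where $\mathrm{pr}\colon G\to G_2^\perp$ is the projection along $G=G_2\times G_2^\perp$. As $\C[G_2]\subseteq u_q(\p_2^{ss})$ already lies in the image, we conclude that $\bar\pi(R^J_{(r)})=\C[\mathrm{pr}(G_{(r)})]\ox u_q(\p_2^{ss})$.

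Finally I would identify $\mathrm{pr}(G_{(r)})$ with $\Lambda=G_{(r)}\cap G_2^\perp$, which I expect to be the only step requiring genuine care. One has $G_2\subseteq G_{(r)}$: either this is already recorded in Proposition~\ref{prop:R}, or it follows because $[\E_\beta,\F_\beta]$ forces $K_\beta\in R^J_{(r)}$ for each $\beta\in\Gamma_2$ while $R^J_{(r)}$ is a Hopf subalgebra, so its grouplikes form a subgroup containing both $G_{(r)}$ and $G_2$. Granting this, the splitting $G=G_2\times G_2^\perp$ restricts to $G_{(r)}=G_2\times(G_{(r)}\cap G_2^\perp)$, whence $\mathrm{pr}(G_{(r)})=G_{(r)}\cap G_2^\perp=\Lambda$. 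Combining with the previous paragraph gives $R^J_{(r)}/\N\overset{\cong}\to\C[\Lambda]\ox u_q(\p_2^{ss})$, and the isomorphism is canonical since it is assembled from the canonical maps of Lemma~\ref{lem:parbol} and Proposition~\ref{prop:R}. The principal difficulty, then, is less a hard calculation than careful bookkeeping with the two relevant group splittings and with the precise group of grouplikes of $R^J_{(r)}$, so as to see that $\bar\pi$ indeed takes values in, and surjects onto, the summand $\C[\Lambda]\ox u_q(\p_2^{ss})$.
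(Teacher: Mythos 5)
Your argument is correct and follows the same route as the paper: restrict the quotient map of Lemma~\ref{lem:parbol} along the inclusion $R^J_{(r)}\hookrightarrow u_q(\p_2)$ furnished by Proposition~\ref{prop:R}, then identify the image using the generators from that proposition and the splitting $G_{(r)}=G_2\times\Lambda$ coming from $G_2\subseteq G_{(r)}$. The paper compresses all of this into one sentence, so your write-up simply supplies the bookkeeping left implicit there.
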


\begin{proof}
The isomorphisms come from restricting the isomorphisms of Lemma~\ref{lem:parbol} along the inclusion $R^J_{(\ast)}\to u_q(\p_{\ast})$.
\end{proof}

\subsection{An example}

It seems, from considering examples, that the subalgebra $R_{(r)}^J$ will often be the full parabolic $u_q(\p_2)$.  This will always be the case, for example, when considering twists associated to maximal triples $(\Gamma_1,\Gamma_2,T)$ on $A_n$ (see the discussion following Lemma~\ref{lem:solex}).  To construct an example in which the containment $R^J_{(r)}\subset u_q(\p_2)$ is proper we need only construct an example for which the containment $G_{(r)}\subset G$ is proper.
\par

We claim that in the following example we will have $G_{(r)}\subsetneq G$ and $R^J_{(r)}\subsetneq u_q(\p_2)$: Take $l=3$ and consider the tiple on $A_3$
\[
\xymatrixrowsep{.03mm}
\xymatrix{
{\color{magenta}\alpha_1} & \\
\bullet\ar@{-}[r] &\alpha_2\ar@{-}[r] &\bullet\\
 & &{\color{blue}\alpha_3}
}
\]
with $\Gamma_1=\{\alpha_1\}$, $\Gamma_2=\{\alpha_3\}$, $T(\alpha_1)=\alpha_3$.  We have here
\[
\L=\mathbb{Z}/3\mathbb{Z}\cdot\{\alpha_2,\alpha_1+\alpha_3\},\ \ \G_2^\perp=\mathbb{Z}/3\mathbb{Z}\cdot\{\alpha_1,\alpha_2+\frac{1}{2}\alpha_3\}
\]
and the form on $\G_2^\perp$ is given (in multiplicative notation) by
\[
(\alpha_1,\alpha_1)=q^2,\ (\alpha_1,\alpha_2+\frac{1}{2}\alpha_3)=q^{-1},\ (\alpha_2+\frac{1}{2}\alpha_3,\alpha_2+\frac{1}{2}\alpha_3)=1.
\]
The (unique) solution $\S$ with $\S(\alpha_1+\alpha_3,\alpha_2)=q$ is such that
\[
\begin{array}{ll}
\S^{-2}\Omega(\alpha_1+\alpha_3,\alpha_1)=q, & \S^{-2}\Omega(\alpha_2,\alpha_1)=q^{-1}\\
\S^{-2}\Omega(\alpha_1+\alpha_2,\alpha_2+\frac{1}{2}\alpha_3)=q^{-1}, &
\S^{-2}\Omega(\alpha_2,\alpha_2+\frac{1}{2}\alpha_3)=q
\end{array}
\]
and hence
\[
t_{\S^{-2}\Omega}(\alpha_1+\alpha_3)=t_{\S^{-2}\Omega}(\alpha_2)^{-1}=K_{\alpha_1}K_{\alpha_2+\frac{1}{2}\alpha_3}\ \mathrm{mod}\ G_2.
\]
It follows that $G_{(r)}=\mathbb{Z}/3\mathbb{Z}\oplus G_2=(\mathbb{Z}/3\mathbb{Z})^2$ is a proper subgroup of $G=(\mathbb{Z}/3\mathbb{Z})^3$.  Alternatively, the solution with $\S(\alpha_1+\alpha_3,\alpha_2)=1$ yields $G_{(r)}=G$.

\section{A proof of Proposition~\ref{prop:R}}
\label{sect:proof}

Fix $J=J_{T,\S}$.  We will prove by direct calculation that $R^J_{(r)}$ is as proposed in Proposition~\ref{prop:R}.  Let $J'$ be the twist associated to the triple $\Gamma'_1=\Gamma_2$, $\Gamma'_2=\Gamma_1$, $T'=T^{-1}$, and solution $\S'=\S^{-1}$.  The result for $R^J_{(l)}$ can be subsequently be deduced from the fact that the algebra automorphism $\phi:u_q\to u_q$, which exchanges $E_\alpha$ with $F_\alpha$ and sends $K_\gamma$ to $K_{\gamma}^{-1}$, is such that $\phi(R^J_{(l)})=R^{J'}_{(r)}$.

\begin{comment}
\begin{lemma}\label{lem:804}
Let $J'$ is the twist associated to the triple $\Gamma'_1=\Gamma_2$, $\Gamma'_2=\Gamma_1$, $T'=T^{-1}$, and solution $\S'=\S^{-1}$.  The algebra automorphism $\phi:u_q\to u_q$ which exchanges $E_\alpha$ with $F_\alpha$ and sends $K_\gamma$ to $K_{\gamma}^{-1}$ is such that $\phi(R^J_{(l)})=R^{J'}_{(r)}$.  Furthermore, $G_{(l)}(J)=G_{(r)}(J')$.
\end{lemma}

\begin{proof}
We have directly
\[
R^J_{(l)}=t'_{R^J}(u_q^\ast)=t_{R_{21}^J}(u_q^\ast),
\]
where $R^J_{21}=J^{-1}R_{21}J_{21}$.  Now we have $(\phi\ox\phi)(R^J_{21})=R^{J'}$ so that 
\[
\phi(R^J_{(l)})=t_{R^{J'}}((\phi^{-1})^\ast(u_q^\ast))=R^{J'}_{(r)}.
\]
For the groups we have $G_{(l)}(J)=t'_{\Omega^{\S^{-1}}}(\G)=t_{\Omega^{\S}}(\G)=G_{(r)}(J')$.
\end{proof}

Now supposing we establish Proposition~\ref{prop:R} for the right subalgebra we will have
\[
\begin{array}{rl}
R^J_{(l)}&=\phi^{-1}(\C\langle G_{(r)}(J'),\E_\alpha,\F_\beta:\alpha\in\Gamma_1,\beta\in \Gamma\rangle)\\
&=\C\langle G_{(l)},\E_\beta,\F_\alpha:\alpha\in \Gamma_1,\beta\in \Gamma\rangle
\end{array}
\]
as claimed.
\end{comment}

\subsection{Some supporting results}

\begin{lemma}\label{lem:L1}
$G_{(r)}\subset R^J_{(r)}$ and $G_2\subset G_{(r)}$.
\end{lemma}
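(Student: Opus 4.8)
The claim has two parts: $G_{(r)} \subset R^J_{(r)}$ and $G_2 \subset G_{(r)}$. For the first containment, the plan is to compute the degree-zero component of $R^J = J_{21}^{-1} R J$ with respect to the root-lattice grading, restricted to $\C[G]\ox\C[G]$. Using the explicit formulas for $J = J_{T,\S}$ in Theorem~\ref{thm:twists} and for $J_{21}^{-1}$ in~\eqref{eq:J21-1}, together with $R = \Omega + R_+$ with $R_+ \in I_+\ox I_-$, one sees that the part of $R^J$ landing in $\C[G]\ox\C[G]$ is exactly
\[
(R^J)_{\C[G]\ox\C[G]} = \S_{21}\,\Omega\,\S^{-1} = \S^{-2}\Omega,
\]
since all the $(T^k_\pm \ox 1)(R)$ factors and their transposes contribute the $T$-twisted Cartan pieces $(T^k\ox 1)(\Omega)$ which cancel against the matching factors in $J_{21}^{-1}$, leaving only the bicharacter part $\S^{-1}$ on the right and $\S_{21}$ on the left, conjugating the Cartan $R$-matrix $\Omega$. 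Then for any $f \in u_q^\ast$ supported on $\C[G]$ (i.e.\ $f$ factoring through the quotient $u_q \to \C[G]$), the element $t_{R^J}(f)$ lies in $\C[G]$ and is computed by pairing $f$ against $\S^{-2}\Omega$; as $f$ ranges over $\C[G]^\ast = \G$ this produces exactly the image $t_{\S^{-2}\Omega}(\G) = G_{(r)}$, by the very definition of $G_{(r)}$ in Section~\ref{sect:radford2} (the right subgroup attached to the bicharacter $\S^{-2}\Omega$). Hence $G_{(r)} = t_{R^J}(\C[G]^\ast) \subset t_{R^J}(u_q^\ast) = R^J_{(r)}$.

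For the second containment $G_2 \subset G_{(r)}$: I would unwind what $G_{(r)} = t_{\S^{-2}\Omega}(\G)$ means concretely. Since $\Omega_i := \Omega|(\G_i\times\G_i)$ is non-degenerate by our standing assumptions on $l$ (Section~\ref{sect:bdt}), it suffices to show that the bicharacter $\S^{-2}\Omega$, when restricted so as to detect the $G_2$-component, is non-degenerate there. Concretely, for $\beta \in \Gamma_2$ write $\beta = T\alpha$ for the unique $\alpha \in \Gamma_1$; one evaluates $\S^{-2}\Omega$ on pairs $(\nu, \gamma)$ with $\gamma \in \G_2$ and checks, using equation~\eqref{eq:seq}, which says $\S^2(\alpha - T\alpha, ?) = \Omega(\alpha + T\alpha, ?)$, that on the relevant indices the $\S^{-2}$ factor only shifts $\Omega$ by Cartan data supported on $\L^\perp = \mathbb{Z}/l\mathbb{Z}\cdot\{\alpha - T\alpha\}$, which does not interfere with the $\G_2$-direction since $\G_2$ is complementary. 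Thus the $\G_2 \to G_2$ map induced by $\S^{-2}\Omega$ agrees (up to the non-degenerate $\Omega_2$) with that induced by $\Omega$ itself, and $t_{\S^{-2}\Omega}$ already surjects onto all of $G_2$. Therefore $G_2 \subset G_{(r)}$.

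\textbf{Main obstacle.} The delicate point is the bookkeeping in the first step: verifying that in the product $J_{21}^{-1} R J$ \emph{all} the braid-type factors $(T^k_+\ox 1)(R) = (1\ox T_-^k)(R)$ and their opposites contribute nothing to the $\C[G]\ox\C[G]$-component beyond the Cartan pieces, i.e.\ that the nilpotent tails $R_+$ genuinely drop out after multiplication, and that the Cartan factors $(T^k\ox 1)(\Omega)$ cancel exactly against the ones in $J_{21}^{-1}$ given by~\eqref{eq:J21-1}, leaving precisely $\S_{21}\Omega\S^{-1}$ and nothing else (in particular that the $\Omega_{L^\perp}^{\pm 1/2}$ factors cancel). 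This is a direct but somewhat intricate computation; one can shortcut it by noting that the projection $u_q \to \C[G]$ is a Hopf map, so applying $(\pi\ox\pi)$ to $R^J = J_{21}^{-1} R J$ and using that $(\pi\ox\pi)$ is multiplicative plus $(\pi\ox\pi)(R) = \Omega$, $(\pi\ox\pi)(J) = \S^{-1}\Omega_{L^\perp}^{-1/2}(T^n\ox 1)(\Omega)^{-1}\cdots(T\ox 1)(\Omega)^{-1}$, and $T|\C[G]$ is an automorphism, reduces the whole thing to a computation purely in the commutative algebra $\C[G]\ox\C[G]$, where everything collapses to $\S^{-2}\Omega$ after the telescoping cancellation of the $(T^k\ox 1)(\Omega)$ terms. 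Once that identification is in hand, both containments are essentially immediate from the definitions.
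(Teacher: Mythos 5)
Your overall strategy — identify the $\C[G]\ox\C[G]$ component of $R^J$ as $\S^{-2}\Omega$, recognize $G_{(r)}$ as its right subgroup, then use equation~\eqref{eq:seq} for $G_2\subset G_{(r)}$ — is the same high-level plan as the paper, and the identification $\S^{-2}\Omega$ is correct. However, there are two genuine gaps.

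First, your proposed ``shortcut'' is based on a false premise: the linear projection $\pi:u_q\to\C[G]$ is \emph{not} an algebra map, and $(\pi\ox\pi)$ is not multiplicative. Indeed $\pi(E_\alpha F_\alpha)=\frac{K_\alpha-K_\alpha^{-1}}{q-q^{-1}}\neq 0=\pi(E_\alpha)\pi(F_\alpha)$; equivalently, the ideal generated by the $E_\alpha$ and $F_\alpha$ already contains $K_\alpha-K_\alpha^{-1}$, so the honest algebra quotient is not $\C[G]$. The paper's argument instead exploits the $(u_-,u_+)$-bimodule projection $\Pi:u_q\cong u_-\ox_{\C[G]}u_+\to\C[G]$, and the crucial point is the \emph{ordering} of the three factors of $R^J=J_{21}^{-1}RJ$: the first tensor legs sit in $u_-\cdot u_+\cdot u_+$, so that $\Pi(abc)=\pi_-(a)\pi_+(b)\pi_+(c)$ because $\pi_+$ is an algebra map on $u_+$. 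Applying only $(\Pi\ox 1)$ (not $\Pi\ox\Pi$), the computation factorizes and yields $(\Pi\ox 1)(R^J)=\S^{-1}\Omega_{L^\perp}^{1/2}\cdot\Omega\cdot\S^{-1}\Omega_{L^\perp}^{-1/2}=\S^{-2}\Omega\in\C[G]\ox\C[G]$; the fact that this already lands in $\C[G]\ox\C[G]$ is what licenses thinking of it as ``the'' $\C[G]\ox\C[G]$-component. Your version needs this triangular bookkeeping spelled out rather than a blanket appeal to multiplicativity.

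Second, your argument for $G_2\subset G_{(r)}$ is too vague to stand. ``The $\S^{-2}$ factor only shifts $\Omega$ by Cartan data supported on $\L^\perp$, which does not interfere with the $\G_2$-direction'' does not by itself show that $G_2$ is in the image. The paper instead makes a one-line explicit calculation: for $\alpha\in\Gamma_1$,
\[
\S^{-2}\Omega(\alpha-T\alpha,?)=\Omega^{-1}(\alpha+T\alpha,?)\,\Omega(\alpha-T\alpha,?)=\Omega^{-2}(T\alpha,?)=K_{T\alpha}^{-2},
\]
using \eqref{eq:seq}, and then invokes that $2$ is invertible mod $l$ to conclude $K_{T\alpha}\in G_{(r)}$ and hence $G_2\subset G_{(r)}$. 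You should replace the heuristic by this computation; as written, your version does not actually produce the generators of $G_2$ inside $t_{\S^{-2}\Omega}(\G)$.
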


\begin{proof}
We have the $(u_-,u_+)$-bimodule isomorphism $u_-\ox_{\C[G]}u_+\to u_q$ given by multiplication.  The two projections $u_{\pm}\to \C[G]$ then give $u_-\ox_{\C[G]}u_+\to \C[G]$ and hence a bimodule projection $\Pi:u_q\to \C[G]$.  This gives an embedding $\Pi^\ast:\C[G]^\ast\to u_q^\ast$.  We have
\[
(\Pi\ox 1)(R^J)=\S^{-1}\Omega^{1/2}_{L^\perp}\Omega\S^{-1}\Omega^{-1/2}_{L^\perp}=\S^{-2}\Omega\in \C[G]\ox u_q.
\]
Note that $\S^{-2}\Omega=(\S_{21}^{-1})^{-1}\Omega\S^{-1}$ so that for any character $\mu\in \G$ we have
\[
(\mu\Pi\ox 1)(R^J)=t_{\Omega^{\S^{-1}}}(\mu)
\]
and hence $t_{R^J}(\G\Pi)=G_{(r)}$.  This gives the proposed inclusion $G_{(r)}\subset R^J_{(r)}$.
\par

As for the inclusion $G_2\subset G_{(r)}$ note that for $\alpha\in \Gamma_1$ we have
\[
\S^{-2}\Omega(\alpha-T\alpha,?)=\Omega^{-1}(\alpha+T\alpha,?)\Omega(\alpha-T\alpha,?)=\Omega^{-2}(T\alpha,?)=K^{-2}_{T\alpha}.
\]
Since $2$ is a unit in $\mathbb{Z}/l\mathbb{Z}$ we see that each $K_{T\alpha}\in G_{(r)}$ and hence $G_2\subset G_{(r)}$.
\end{proof}

The inclusion $G_2\subset G_{(r)}$ and splitting $G=G_2^\perp\times G_2$ implies that $G_{(r)}$ splits as $G_{(r)}=\Lambda\times G_2$, where $\Lambda=G_2^\perp\cap G_{(r)}$.
\par

In the following lemma we use the fact that for any bicharacter $B\in \C[G]\ox \C[G]$ we have
\[
B=\sum_{\mu,\nu\in\G}B(\mu,\nu)P_\mu\ox P_\nu,
\]
where $P_\mu=|G|^{-1}\sum_{\gamma\in \G}\mu(K_\gamma^{-1})K_\gamma$ is the idempotent associated to $\mu$.  Note that $P_\mu P_\nu=\delta_{\mu,\nu}P_\mu$ and $\mu(P_\nu)=\delta_{\mu,\nu}$.  For any bicharacter $B$ and $\mu\in \G$ we take
\[
B(\mu)=\text{the unique element in }\G\text{ with }B(\mu,\nu)=\Omega(B(\mu),\nu)\ \forall\ \nu\in \G.
\]

\begin{lemma}\label{lem:868}
For any bicharacter $B$, and $\alpha,\beta\in\Gamma$, we have
\[
(E_\alpha\ox F_\beta) B=B(K_{B_{21}(\beta)}\ox K_{B^{-1}(\alpha)})(E_\alpha\ox F_\beta)
\]
and
\[
(F_\beta\ox E_\alpha) B=B(K_{B_{21}^{-1}(\alpha)}\ox K_{B(\beta)})(F_\beta\ox E_\alpha).
\]
\end{lemma}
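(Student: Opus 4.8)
The plan is to expand $B$ in terms of the primitive idempotents $P_\mu$ of $\C[G]$ and to commute $E_\alpha\ox F_\beta$ through it. The only inputs needed are the defining relations $K_\gamma E_\alpha=q^{(\gamma,\alpha)}E_\alpha K_\gamma$ and $K_\gamma F_\beta=q^{-(\gamma,\beta)}F_\beta K_\gamma$ of $u_q$, together with the formula $P_\mu=|G|^{-1}\sum_\gamma\mu(K_\gamma^{-1})K_\gamma$. From these one reads off two elementary identities: $P_\mu K_\gamma=\mu(K_\gamma)P_\mu$, and, reflecting the homogeneity of the generators, $E_\alpha P_\mu=P_{\mu+\alpha}E_\alpha$ and $F_\beta P_\nu=P_{\nu-\beta}F_\beta$.

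Granting these, I would compute $(E_\alpha\ox F_\beta)B=\sum_{\mu,\nu}B(\mu,\nu)\,P_{\mu+\alpha}E_\alpha\ox P_{\nu-\beta}F_\beta$ and then reindex $\mu\mapsto\mu-\alpha$, $\nu\mapsto\nu+\beta$, so that all the idempotents sit to the left of $E_\alpha\ox F_\beta$. Bimultiplicativity of $B$ splits the resulting coefficient $B(\mu-\alpha,\nu+\beta)$ as $B(\mu,\nu)$ times the ``shift characters'' $\mu\mapsto B(\mu,\beta)$ and $\nu\mapsto B(\alpha,\nu)^{-1}$ (and a scalar depending only on $\alpha,\beta$). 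Using $P_\mu K_\gamma=\mu(K_\gamma)P_\mu$ together with the definitions of $B(\,\cdot\,)$ and of $B_{21}(\,\cdot\,)$ — so that $\mu(K_{B_{21}(\beta)})=\Omega(B_{21}(\beta),\mu)=B_{21}(\beta,\mu)=B(\mu,\beta)$ and $\nu(K_{B^{-1}(\alpha)})=\Omega(B^{-1}(\alpha),\nu)=B^{-1}(\alpha,\nu)=B(\alpha,\nu)^{-1}$ — one recognises that these are exactly the factors produced by right-multiplying $B=\sum_{\mu,\nu}B(\mu,\nu)P_\mu\ox P_\nu$ by $K_{B_{21}(\beta)}\ox K_{B^{-1}(\alpha)}$. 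Reassembling yields the first identity. The second is obtained by running the identical argument with the two tensor factors interchanged, i.e. pushing $F_\beta\ox E_\alpha$ through $B$ (so that the degree shifts on the idempotents change sign and the roles of $B$ and $B_{21}$ swap), or, more cheaply, by conjugating the first identity by the flip $\tau$ together with the algebra automorphism of $u_q$ that exchanges the $E$'s and $F$'s and inverts the $K$'s.

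The computation is routine; the only point requiring care is the dictionary among weights in $\G$, grouplikes $K_\gamma\in G$, and the bicharacter operators $B(\,\cdot\,)$, $B_{21}(\,\cdot\,)$ — that is, ensuring each shift in an idempotent index is matched against the correct grouplike on the right-hand side, and tracking the accompanying scalars. I do not expect any genuine obstacle.
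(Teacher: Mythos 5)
Your argument reproduces the paper's proof step for step: expand $B$ in the primitive idempotents $P_\mu$, push $E_\alpha\ox F_\beta$ through via $E_\alpha P_\mu=P_{\mu+\alpha}E_\alpha$ and $F_\beta P_\nu=P_{\nu-\beta}F_\beta$, reindex, and use bimultiplicativity of $B$ together with $P_\mu K_\gamma=\mu(K_\gamma)P_\mu$ and the dictionary $\mu(K_{B_{21}(\beta)})=B(\mu,\beta)$, $\nu(K_{B^{-1}(\alpha)})=B^{-1}(\alpha,\nu)$ to recognise $B(K_{B_{21}(\beta)}\ox K_{B^{-1}(\alpha)})$, exactly as in the paper (which handles the second identity with the same ``similarly''). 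In fact you were slightly more careful than the paper in parenthetically flagging the leftover cross-term scalar $B(-\alpha,\beta)=B^{-1}(\alpha,\beta)$ coming from $B(\mu-\alpha,\nu+\beta)$ -- the paper's proof drops it silently and the lemma as stated omits it -- so rather than writing ``reassembling yields the first identity'' you should carry that scalar through explicitly (or note that the downstream use with $B=\Omega_L^{1/2}$ only requires the identity up to such a scalar).
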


\begin{proof}
We have
\[
E_\alpha K_\gamma=q^{-(\alpha,\gamma)}K_\gamma E_\alpha\Rightarrow E_\alpha P_\mu=P_{\mu+\alpha}E_\alpha
\]
and $F_\beta P_\nu=P_{\nu-\beta}F_\beta$.  So for any bicharacter $B$ we have
\[
\begin{array}{l}
(E_\alpha\ox F_\beta) B=(\sum_{\mu,\nu} B(\mu,\nu)P_{\mu+\alpha}\ox P_{\nu-\beta})(E_\alpha\ox F_\beta)\\
=(\sum_{\mu,\nu} B(\mu-\alpha,\nu+\beta)P_{\mu}\ox P_{\nu})(E_\alpha\ox F_\beta)\\
=B(\sum_{\mu,\nu} B(\mu,\beta)B^{-1}(\alpha,\nu)P_{\mu}\ox P_{\nu})(E_\alpha\ox F_\beta)\\
=B(K_{B_{21}(\beta)}\ox K_{B^{-1}(\alpha)})E_\alpha\ox F_\beta.
\end{array}
\]
We arrive at the equation for $F_\beta\ox E_\alpha$ similarly.
\end{proof}

Considering the case $B=\Omega_L^{1/2}$, for each $\beta\in \Gamma_2$ we have
\[
\begin{array}{rl}
(E_\beta\ox F_{T^{-k}\beta}) \Omega^{1/2}_L & =\Omega^{1/2}_L(K_{\bar{T^{-k}\beta}}^{1/2}E_\beta\ox K_{\bar{\beta}}^{-1/2}F_{T^{-k}\beta})\\
& =\Omega^{1/2}_L(K_{\bar{\beta}}^{1/2}E_\beta\ox K_{\bar{\beta}}^{-1/2}F_{T^{-k}\beta})\\
& =q^{\frac{1}{2}(\bar{\beta},\bar{\beta})}\Omega^{1/2}_L(\E_\beta\ox \F_{T^{-k}\beta}).
\end{array}
\]
Similarly $(F_\alpha\ox E_{T^{k}\alpha}) \Omega^{1/2}_L=q^{\frac{1}{2}(\bar{\alpha},\bar{\alpha})}\Omega^{1/2}_L(\F_\alpha\ox \E_{T^k\alpha})$ for $\alpha\in \Gamma_1$.

\begin{comment}
\begin{lemma}\label{lem:L3}
We have
\[
(S^{-1}\ox 1)\left((F_\alpha\ox E_\beta)B\right)=-q^{-(B(\alpha),\beta)}(K_\alpha\ox K_{B(\alpha)})(F_\alpha\ox E_\beta)B^{-1}
\]
for simple roots $\alpha,\beta$.
\end{lemma}

\begin{proof}
One simply calculates
\[
\begin{array}{rl}
(S^{-1}\ox 1)((F_\alpha\ox E_\beta)B)&=-\sum_{\mu,\nu}B(\mu,\nu)(K_\alpha P_{-\mu}F_\alpha\ox E_\beta P_\nu)\\
&=-(K_\alpha F_\alpha\ox E_\beta)\sum_{\mu,\nu}B(\mu,\nu)(P_{\alpha-\mu}\ox P_\nu)\\
&=-(K_\alpha F_\alpha\ox E_\beta)\sum_{\mu,\nu}B(\alpha-\mu,\nu)(P_{\mu}\ox P_\nu)\\
&=-(K_\alpha F_\alpha\ox E_\beta)\sum_{\mu,\nu}B^{-1}(\mu,\nu)(P_{\mu}\ox K_{B(\alpha)}P_\nu)\\
&=-q^{-(B(\alpha),\beta)}(K_\alpha\ox K_{B(\alpha)})(F_\alpha\ox E_\beta)B^{-1}.
\end{array}
\]
\end{proof}
\end{comment}

\subsection{Proof of Proposition~\ref{prop:R}}

As explained in the beginning of the section, we need only prove the proposition for $R^J_{(r)}$.  We prove the proposition in two parts.  First we establish the containment $\C\langle G_{(r)},\E_{\alpha},\F_\beta:\alpha\in \Gamma_2,\beta\in\Gamma\rangle\subset R^J_{(r)}$, then we establish the opposite containment.

\begin{proof}[Proof of Proposition~\ref{prop:R}]
{\bf Part I:} Take
\[
\Omega(k, m) =\prod_{k\leq i\leq m}(T^i\ox 1)(\Omega)\ \ \mathrm{and}\ \ 
\Omega'(k,m)=\prod_{k\leq j\leq m}(1\ox T^j)(\Omega),
\]
with the empty product equal to $1$.  Now $J$ appears as
\[
(1\ox T_-)(R)\dots (1\ox T^n_-)(R)\S^{-1}\Omega_{L^\perp}^{-1/2}\Omega(1,n)^{-1}
\]
and $J_{21}^{-1}$ appears as
\[
\Omega'(1,n)\S^{-1}\Omega^{1/2}_{L^\perp}(S^{-1}\ox T^n_+)(R_{21})\dots (S^{-1}\ox T_+)(R_{21}).
\]
It suffices to prove that each of the $\E_{\alpha}$ and $\F_\beta$ are in $R^J_{(l)}$, by Lemma~\ref{lem:L1}.
\par

From our $\C[G]$-basis for $u_q$ we have the $\C[G]$-linear projection
\[
\pi^E_\beta:u_q\to \C[G]E_\beta
\]
which annihilates each of the basis elements from Theorem~\ref{thm:basis}, save for $E_\beta$.  More specifically, we take $\pi_\beta^E$ to be the obvious projection composed with the scaling by $q(1-q^2)^{-1}$.  Then we have
\[
\begin{array}{l}
(\pi^E_\beta\ox 1)(R^J)\\
=\sum_{k=0}^{m(\beta)}\S^{-1}\Omega^{1/2}_{L^\perp}\Omega(0,k-1)(E_\beta\ox F_{T^{-k}\beta}) \Omega(k,n)\S^{-1}\Omega_{L^\perp}^{-1/2}\Omega(1,n)^{-1}\\
=\sum_k\S^{-1}\Omega^{1/2}_{L^\perp}\Omega(0,k-1)( E_\beta\ox F_{T^{-k}\beta}) \Omega(1,k-1)^{-1}\S^{-1}\Omega_{L^\perp}^{-1/2}\\
=\sum_k\S^{-1}\Omega^{1/2}_{L^\perp}\Omega(0,k-1)(E_\beta\ox F_{T^{-k}\beta}) \Omega(0,k-1)^{-1}\S^{-1}\Omega^{1/2}\Omega_L^{1/2},
\end{array}
\]
where $m(\beta)=0$ when $\beta\notin \Gamma_2$ and otherwise $m(\beta)$ is minimal with $T^{-m(\beta)}\beta\notin \Gamma_2$ and $T^{-i}\beta\in\Gamma_2$ for $0\leq i<m(\beta)$.  We have
\[
\Omega(0,k-1)_{21}^{-1}(T^{-k}\beta)=-\sum_{i=1}^{k}T^{-i}\beta\ \ \mathrm{and}\ \  \Omega(0,k-1)(\alpha)=\sum_{j=0}^{k-1}T^j\beta
\]
so that the final expression reduces to
\[
\begin{array}{l}
\sum_k\S^{-1}\Omega^{1/2}_{L^\perp}(K_{\sum_{i=1}^kT^{-i}\beta}^{-1}\ox K_{\sum_{j=0}^{k-1}T^j\beta})(E_\beta\ox  F_{T^{-k}\beta}) \S^{-1}\Omega^{1/2}\Omega_L^{1/2}\\
=\sum_k\S^{-2}\Omega_{L^\perp}\Omega_L^{1/2}(K^{1/2}_{\S^2\Omega(T^{-k}\beta)}K_{\sum_iT^{-i}\beta}^{-1}\ox K^{-1/2}_{\S^{-2}\Omega(\beta)}K_{\sum_jT^j\beta})(E_\beta\ox  F_{T^{-k}\beta})\Omega_L^{1/2}\\
=q^{\frac{1}{2}(\bar{\beta},\bar{\beta})}\sum_k\S^{-2}\Omega(K^{1/2}_{\S^2\Omega(T^{-k}\beta)}K_{\sum_iT^{-i}\beta}^{-1}\ox K^{-1/2}_{\S^{-2}\Omega(\beta)}K_{\sum_jT^j\beta})(\E_\beta\ox  \F_{T^{-k}\beta}).
\end{array}
\]
For $\epsilon_{\beta}:\C[G]E_\beta\to \C$, $g E_\beta\mapsto q^{-\frac{1}{2}(\bar{\beta},\bar{\beta})}$, we then have
\begin{equation}\label{eq:976}
(\epsilon_\beta \pi^E_\beta\ox 1)(R^J)=\sum_{k=0}^{m(\beta)} K^{-1/2}_{\S^{-2}\Omega(\beta)}K_{\sum_{j=0}^{k-1}T^j\beta} \F_{T^{-k}\beta}\in R^J_{(r)}.
\end{equation}
Note that the coefficients $K^{-1/2}_{\S^{-2}\Omega(\beta)}K_{\sum_{j=0}^{k-1}T^j\beta}$ are all in $G_{(r)}$.
\par

When $m(\beta)=0$, i.e. when $\beta\in \Gamma-\Gamma_2$, the sum~\eqref{eq:976} is just the element $K^{-1/2}_{\S^{-2}\Omega(\beta)}\F_\beta$.  Since $K^{-1/2}_{\S^{-2}\Omega(\beta)}\in G_{(r)}\subset R^J_{(r)}$ this implies $\F_\beta\in R^J_{(r)}$.  Since $m(\beta)=m(T^{-1}\beta)+1$ when $\beta\in \Gamma_2$, it now follows from~\eqref{eq:976} and induction on $m(\beta)$ that all $\F_\beta\in R^J_{(r)}$.
\par

The computation for the $\E_\beta$, $\beta\in \Gamma_2$, is quite similar.  Namely, one show for $\alpha\in \Gamma_1$ that $(\pi^F_\alpha\ox 1)(R^J)$ is the a sum
\[
q^{\frac{1}{2}(\bar{\alpha},\bar{\alpha})}\sum_{k=1}^{m'(\alpha)}\S^{-2}\Omega(g_k\ox K^{1/2}_{\S^{-2}\Omega(\alpha)}K_{T^k\alpha+\sum_{j=1}^kT^j\alpha})(\F_\alpha\ox \E_{T^k\alpha}),
\]
where $\pi^F_\alpha$ is a scaling of the obvious projection and $g_k\in G$, then proceeds by induction on $m'(\alpha)$ just as above.

{\bf Part II:} We now give the opposite containment $R^J_{(r)}\subset \C\langle G_{(r)},\E_{\alpha},\F_\beta:\alpha\in \Gamma_2,\beta\in\Gamma\rangle$ to complete the proof.  We adopt the same notation $\Omega(k,m)$ and $\Omega'(k,m)$ as above.  We have that $R^J$ is a $\C[G]\ox \C[G_{(r)}]$-linear combination of elements of the form $M_1M_2M_3$ with
\[
M_1=\Omega'(1,n)\S^{-1}\Omega_{L^\perp}^{1/2}(F_{\xi_n}\ox E_{T^n\xi_n})(1\ox T^n)(\Omega)^{-1}\dots (F_{\xi_1}\ox E_{T\xi_1})(1\ox T)(\Omega)^{-1},
\]
$M_2=(E_{\zeta}\ox F_{\zeta})\Omega$,
\[
M_3=(E_{\eta_1}\ox F_{T^{-1}\eta_1})(T\ox 1)(\Omega)\dots (E_{\eta_n}\ox F_{T^{-n}\eta_n})\Omega(0,n-1)^{-1}\S^{-1}\Omega^{1/2}\Omega_L^{1/2}.
\]
Here the $\xi_k$ are in $\mathbb{Z}_{\geq 0}\Gamma_1$ with $T^i(\xi_k)\in \mathbb{Z}_{\geq 0}\Gamma_1$ for each $0\leq i<k$.  We take a similar restriction for the $\eta_j\in \mathbb{Z}_{\geq 0}\Gamma_2$ and let $\zeta$ be arbitrary in the positive root lattice.  For $\tau=\alpha_1+\dots +\alpha_m$ with the $\alpha_i$ simple roots, by $E_\tau$ (resp. $F_\tau$) we simply mean some permutation of the monomial $E_{\alpha_1}\dots E_{\alpha_n}$ (resp. $F_{\alpha_1}\dots F_{\alpha_n}$).  So we are deviating from the notation of Theorem~\ref{thm:basis} here.
\par

One simply moves all the bicharacters from the right to left, in order, using Lemma~\ref{lem:868}, to find
\[
\begin{array}{l}
M_1M_2M_3\\
=q^{\epsilon}(g_1\ox g_2)\S^{-2}\Omega\left((\prod_i \F_{\xi_i})\E_\zeta(\prod_j\E_{\eta_j})\ox (\prod_i \E_{T^i\xi_i})\F_\zeta(\prod_j\F_{T^{-j}\eta_j})\right)
\end{array}
\]
with $g_1\in G$ and $g_2\in G_{(r)}$.  Hence for any $f\in u_q^\ast$ we will have, for some constant $c_f\in \C$, 
\[
\begin{array}{l}
(f\ox 1)(M_1M_2M_3)\\
=c_fg_2t_{\S^{-2}\Omega}(f)(\prod_i \E_{T^i\xi_i})\F_\zeta(\prod_j\F_{T^{-j}\eta_j})\in \C\langle G_{(r)},\E_{\alpha},\F_\beta:\alpha\in \Gamma_2,\beta\in\Gamma\rangle.
\end{array}
\]
Since $R^J$ is a sum of such monomials $M_1M_2M_3$ we find
\[
R^J_{(r)}=t_{R^J}(u_q^\ast)\subset \C\langle G_{(r)},\E_{\alpha},\F_\beta:\alpha\in \Gamma_2,\beta\in\Gamma\rangle.
\]
\end{proof}

\section{Representation theory of the dual $(u_q(\g)^J)^\ast$}
\label{sect:reptheory}

In this section we describe the irreducible representations of the dual $(u_q(\g)^J)^\ast$,  for $J=J_{T,\S}$ as in Theorem~\ref{thm:twists}.

\subsection{Grouplikes and the parabolic subalgebras}

\begin{lemma}\label{lem:grouplike0}
Each $K_\mu\in L$ is grouplike in the twist $u_q^J$.
\end{lemma}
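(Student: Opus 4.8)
The plan is to show that for $K_\mu \in L$, conjugation of $\Delta(K_\mu)$ by $J = J_{T,\S}$ leaves it unchanged, i.e. $\Delta^J(K_\mu) = J^{-1}(K_\mu \ox K_\mu)J = K_\mu \ox K_\mu$. Since $K_\mu$ is grouplike in $u_q$, this is precisely the statement that $K_\mu \ox K_\mu$ commutes with $J$.

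First I would recall the explicit form of the twist,
\[
J_{T,\S}=(T_+\ox 1)(R)\dots (T^n_+\ox 1)(R)\,\S^{-1}\Omega_{L^\perp}^{-1/2}(T^n\ox 1)(\Omega)^{-1}\dots(T\ox 1)(\Omega)^{-1},
\]
and handle each type of factor separately. The $\C[G]\ox\C[G]$ factors $\S^{-1}$, $\Omega_{L^\perp}^{-1/2}$, and the $(T^k\ox 1)(\Omega)^{-1}$ are bicharacters and hence automatically commute with the grouplike $K_\mu\ox K_\mu$, so those pose no issue. The real content is the factors $(T^k_+\ox 1)(R)$. Writing $R = \Omega\prod_{\nu}\big(\sum_n c_n E_\nu^n\ox F_\nu^n\big)$ (in the appropriate order), the $\Omega$ piece again commutes with $K_\mu\ox K_\mu$, so it suffices to check that $K_\mu\ox K_\mu$ commutes with each factor of the form $(T^k_+\ox 1)\big(E_\nu^n\ox F_\nu^n\big)$. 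Using that $T_+$ is a graded Hopf endomorphism, $(T^k_+\ox 1)(E_\nu\ox F_\nu)$ is a $\C[G]$-linear combination of terms $X\ox F_\nu$ where $X$ is homogeneous of degree $T^k\nu'$ for the relevant constituent simple roots $\nu'$ of $\nu$ lying in $\Gamma_1$; more precisely the whole factor lives in degree $(T^k\nu, -\nu)$ with respect to the root-lattice bigrading (or is zero). Conjugating a bidegree-$(\lambda_1,\lambda_2)$ element by $K_\mu\ox K_\mu$ multiplies it by $q^{(\mu,\lambda_1)+(\mu,\lambda_2)}$, so I need $(\mu, T^k\nu) + (\mu, -\nu) = 0$, i.e. $(\mu, T^k\nu - \nu) = 0$ in $\mathbb{Z}/l\mathbb{Z}$.

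The key step is therefore to verify $(\mu, T^k\nu - \nu) = 0$ for all $\mu$ with $K_\mu \in L$ and all positive roots $\nu$ that actually appear (those with $T^j\nu \in \mathbb{Z}_{\geq 0}\Gamma_1$ for $j < k$). Since $L = (\mathbb{Z}/l\mathbb{Z}\cdot\{\alpha - T\alpha : \alpha\in\Gamma_1\})^\perp$ and $T$ extends to a form-preserving automorphism of $\G$ fixing $\L$ (Lemma~\ref{lem:GT}), I would argue by telescoping: $T^k\nu - \nu = \sum_{j=0}^{k-1}(T^{j+1}\nu - T^j\nu) = \sum_j T^j(T\nu - \nu)$, so it is enough that $(\mu, T^j(T\nu-\nu)) = 0$ for each $j$. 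Because $T$ preserves the form and $\mu$ is $T$-invariant (as $K_\mu\in L$ means $\mu\in\L$ and $T|\L = \mathrm{id}$), this reduces to $(\mu, T\nu - \nu) = 0$. Finally, since $\nu$ is a sum of simple roots lying in $\Gamma_1$ (for the factor to be nonzero), $T\nu - \nu$ is a $\mathbb{Z}$-combination of the differences $\alpha - T\alpha$, $\alpha\in\Gamma_1$, and $\mu\in\L$ is by definition orthogonal to all of these. (I should double-check the sign/direction: $T\nu - \nu$ versus $\nu - T\nu$ — but the span is the same.) This is really the only place any thought is needed.

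The main obstacle — really a bookkeeping point rather than a genuine difficulty — is correctly tracking the bidegrees through the iterated application of $T_+^k$ and ensuring that the only nonzero contributions come from $\nu$ supported on $\Gamma_1$, so that the orthogonality $\mu \perp \{\alpha - T\alpha\}$ applies. One must also be slightly careful that the $\Omega$-type factors and the half-powers $\Omega_{L^\perp}^{-1/2}$, $K^{1/2}$ etc. are genuinely central against grouplikes; this is immediate since they lie in $\C[G]\ox\C[G]$ and $G$ is abelian. Once the bidegree computation is in place, the conclusion $J^{-1}(K_\mu\ox K_\mu)J = K_\mu\ox K_\mu$, hence $\Delta^J(K_\mu) = K_\mu\ox K_\mu$, is immediate, and grouplikeness in $u_q^J$ follows (the counit is unchanged by twisting, so $\epsilon(K_\mu)=1$ still).
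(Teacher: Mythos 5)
Your proof is correct and follows essentially the same route as the paper: show $K_\mu\ox K_\mu$ commutes with $J$, reduce to the factors $(T^k_+\ox 1)(R)$, and observe that the bidegree orthogonality $(\mu, T^k\nu-\nu)=0$ holds because $\mu\in\L$ while $\nu-T^k\nu$ lies in the span of $\{\alpha-T\alpha:\alpha\in\Gamma_1\}=\L^\perp$. Your telescoping via $T$-invariance of the form and of $\mu$ is a slight elaboration of the paper's one-line observation that $\nu-T^k\nu\in\L^\perp$, but it is the same idea.
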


\begin{proof}
We claim that $K_\mu\ox K_\mu$ commutes with $J$, so that $\Delta^J(K_\mu)=J^{-1}(K_\mu\ox K_\mu)J=K_\mu\ox K_\mu$.  From the particular form on $J$, we see that it suffices to show that $K_\mu\ox K_\mu$ commutes with $T^k_+E_\nu\ox F_{\nu}$ and $F_{\nu}\ox T^k_+E_\nu$ for $\nu$ a positive root in $\mathbb{Z}\Gamma_1$ with $T^i\nu\in\mathbb{Z}\Gamma_1$ for all $0\leq i<k$.  But this is clear since $\nu-T^k\nu\in \L^\perp$ and $\mu\in \L$.
\end{proof}

Take $\heartsuit$ equal to either $(r)$ or $(l)$.  By Lemma~\ref{lem:grouplike0} we now see that the restriction of the multiplication map $\C[L]\ox R^J_{\heartsuit}\to u_q^J$ is a coalgebra map, where we just give $\C[L]$ its usual group ring structure.  If we let $L$ act on $R^J_{\heartsuit}$ by conjugation this gives a Hopf map $\C[L]\ltimes R^J_{\heartsuit}\to u_q^J$.  According to the particular form of $R^J_{\heartsuit}$ given in Proposition~\ref{prop:R}, and Lemma~\ref{lem:GT}, we see that this map has image equal to the corresponding quantum parabolic $u_q(\p_i)$.  So we find

\begin{lemma}
The quantum parabolics $u_q(\p_i)$ are both Hopf subalgebras in the twist $u_q(\g)^J$.
\end{lemma}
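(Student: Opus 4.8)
The plan is to combine the structural results already established with the fact that a twist changes the coalgebra structure but not the algebra structure of $u_q(\g)$. First I would recall that $u_q(\p_i)$ was \emph{defined} as a subalgebra of $u_q(\g) = u_q(\g)^J$ (as an algebra), so the only issue is whether it is a subcoalgebra for the twisted comultiplication $\Delta^J$, and whether it is closed under the twisted antipode $S_J$. Since $S_J(h) = Q_J^{-1}S(h)Q_J$ with $Q_J = m((S\ox 1)(J))$ and $S$ is already known to preserve $u_q(\p_i)$, the antipode part will reduce to checking that $Q_J^{\pm 1}$ lies in $u_q(\p_i)$; and $Q_J$ is built from components of $J$, which involves only $E_\beta$ with $\beta\in\Gamma_2$ (for $\p_2$) or $F_\alpha$ with $\alpha\in\Gamma_1$ (for $\p_1$) together with grouplikes, so this is immediate from the explicit form of $J$ in Theorem~\ref{thm:twists}. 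So the heart of the matter is the coalgebra claim.

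For the coalgebra claim I would use the Hopf map $\C[L]\ltimes R^J_\heartsuit \to u_q(\g)^J$ constructed just above the statement, whose image is exactly $u_q(\p_i)$ (taking $\heartsuit = (r)$ with $i=2$, and $\heartsuit = (l)$ with $i=1$, via Proposition~\ref{prop:R} and Lemma~\ref{lem:GT}). The key input is that $R^J_{(r)}$ and $R^J_{(l)}$ are Hopf subalgebras of $u_q(\g)^J$: this is Radford's Proposition~\ref{prop:R} combined with the general fact cited from~\cite{radford93} that the Radford subalgebras $R_{(l)}, R_{(r)}$ of any quasitriangular Hopf algebra are Hopf subalgebras — applied here to $u_q(\g)^J$ with $R$-matrix $R^J$. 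Then $\C[L]$ is a Hopf subalgebra since each $K_\mu\in L$ is grouplike in $u_q^J$ by Lemma~\ref{lem:grouplike0}, and the smash product $\C[L]\ltimes R^J_\heartsuit$ (with $L$ acting by conjugation, which is by Hopf automorphisms) is a Hopf algebra. The map to $u_q(\g)^J$ being a Hopf map then forces its image $u_q(\p_i)$ to be a Hopf subalgebra of $u_q(\g)^J$.

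So the structure of the argument is: (1) identify $u_q(\p_i)$ with the image of $\C[L]\ltimes R^J_\heartsuit$ in $u_q(\g)^J$; (2) observe $R^J_\heartsuit$ is a Hopf subalgebra of $u_q(\g)^J$ by Radford's result applied to the twisted $R$-matrix; (3) observe $\C[L]$ is a Hopf subalgebra by Lemma~\ref{lem:grouplike0}, and that conjugation by $L$ preserves $R^J_\heartsuit$, so the smash product is a Hopf algebra mapping to $u_q(\g)^J$; (4) conclude the image is a Hopf subalgebra. I expect the only point requiring genuine care — and hence the main obstacle — is verifying that the multiplication map $\C[L]\ltimes R^J_\heartsuit \to u_q(\g)^J$ really is a coalgebra map for $\Delta^J$ (not merely an algebra map with Hopf image), i.e. checking compatibility of the smash-product coproduct with $\Delta^J$ on the nose; this uses that $L$-conjugation is compatible with the coproduct on $R^J_\heartsuit$ and that the $K_\mu$, being $\Delta^J$-grouplike, interact correctly. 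Once that is in place the antipode-closure is, as noted, a short check that $Q_J^{\pm1}\in u_q(\p_i)$ from the explicit formula for $J$.
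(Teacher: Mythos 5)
Your proposal follows essentially the same route as the paper: identify $u_q(\p_i)$ as the image of the Hopf map $\C[L]\ltimes R^J_\heartsuit\to u_q(\g)^J$, using Lemma~\ref{lem:grouplike0} (grouplikeness of $L$ in $u_q^J$) for the coalgebra-map claim, Radford's result for $R^J_\heartsuit$ being a Hopf subalgebra of $u_q^J$, and Proposition~\ref{prop:R} plus Lemma~\ref{lem:GT} to identify the image. The separate check that $Q_J^{\pm 1}\in u_q(\p_i)$ is a correct observation but redundant once you have the image-of-a-Hopf-map argument; and the ``main obstacle'' you flag (that multiplication $\C[L]\ox R^J_\heartsuit\to u_q^J$ is a $\Delta^J$-coalgebra map) is exactly what Lemma~\ref{lem:grouplike0} resolves, since grouplikeness of $K_\mu\in L$ in $u_q^J$ makes the smash-product coproduct go to $\Delta^J$ under multiplication — the paper treats this as immediate, and so can you.
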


From the Hopf map $\C[L]\ltimes R^J_{(l)}\to u_q^J$ we also get a dual Hopf map
\begin{equation}\label{eq:map}
(u_q^J)^\ast\to \C[\L]\ox (R^J_{(l)})^\ast\overset{1\ox t_{R^J}}\to \C[\L]\ox R^J_{(r)}
\end{equation}
which extends to an algebra map 
\[
(u_q^J)^\ast\to \C[\L]\ox R^J_{(r)}/\N\cong \C[\L]\ox \C[\Lambda]\ox u_q(\p_2^{ss}),
\]
by Corollary~\ref{cor:R}.  (Recall our subgroup $\Lambda=G_{(r)}\cap G_2^\perp$ from Corollary~\ref{cor:R}.)  In a moment we will also need the following lemma.

\begin{lemma}\label{lem:L6}
Take $\mathscr{C}=L/(G_{(l)}\cap L)$.  The subgroup $\Lambda$ in $G_{(r)}$ is isomorphic to the dual $(G_{(l)}\cap L)^\vee$, and we have an exact sequence $0\to \mathscr{C}^\vee\to \L\to \Lambda\to 0$.
\end{lemma}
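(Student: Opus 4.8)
The plan is to reduce the lemma to linear algebra for the bicharacter $B=\S^{-2}\Omega\in\C[G]\ox\C[G]$, i.e.\ the $R$-matrix of the twisted group algebra appearing after Lemma~\ref{lem:L1}. For this $B$ one has $G_{(r)}=t_B(\G)$ and $G_{(l)}=t'_B(\G)$, the kernels $r_B=\ker t_B$ and $r'_B=\ker t'_B$ are the left and right radicals of $B$ (of equal order, since $B$ descends to a perfect pairing $G_{(r)}\times G_{(l)}\to\C^\times$ — the source of the duality $G_{(l)}^\vee\cong G_{(r)}$ already used in Section~\ref{sect:proof}). Identify $G$ with $\G$ via $\Omega$, so $L\leftrightarrow\L$, $G_2\leftrightarrow\G_2$, $G_2^\perp\leftrightarrow\G_2^\perp$; under the standing assumptions on $l$, $\G=\L\oplus\L^\perp$ is an orthogonal splitting for $\Omega$ with $\Omega|_{\L}$ and $\Omega|_{\L^\perp}$ nondegenerate, and I write $p\colon\G\to\L$ for the projection. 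The one computation driving everything: from $\S^{-2}\Omega(\alpha-T\alpha,?)=K_{T\alpha}^{-2}$ (the displayed identity in the proof of Lemma~\ref{lem:L1}), invertibility of $2$ mod $l$, and nondegeneracy of $\Omega$ on $\G_1$, $\G_2$, the maps $t_B$ and $t'_B$ restrict to group isomorphisms $\L^\perp\overset{\cong}\to\G_2$ and $\L^\perp\overset{\cong}\to\G_1$ respectively; in particular $r_B\cap\L^\perp=r'_B\cap\L^\perp=0$. I also use $G_2\subseteq G_{(r)}$ (Lemma~\ref{lem:L1}) and the resulting splitting $G_{(r)}=\Lambda\oplus G_2$ with $\Lambda=G_{(r)}\cap G_2^\perp$, recorded just after it.

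The crux is to show that $p$ restricts to an isomorphism $t_B^{-1}(G_2^\perp)\overset{\cong}\to\L$. Injectivity: the kernel of $p$ on $t_B^{-1}(G_2^\perp)$ is $\L^\perp\cap t_B^{-1}(G_2^\perp)$, and since $t_B$ carries $\L^\perp$ isomorphically onto $\G_2$, which meets $G_2^\perp$ trivially, this is $0$. Surjectivity: given $\mu\in\L$, write $t_B(\mu)=\lambda g$ with $\lambda\in G_2^\perp$ and $g\in G_2$, pick $\nu\in\L^\perp$ with $t_B(\nu)=g^{-1}$, and note $\mu+\nu\in t_B^{-1}(G_2^\perp)$ with $p(\mu+\nu)=\mu$. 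Since $r_B\subseteq t_B^{-1}(G_2^\perp)$ and $t_B$ maps $t_B^{-1}(G_2^\perp)$ onto $G_{(r)}\cap G_2^\perp=\Lambda$, this gives $\Lambda\cong t_B^{-1}(G_2^\perp)/r_B\cong\L/p(r_B)$; as $p|_{r_B}$ is injective, we obtain an exact sequence $0\to p(r_B)\to\L\to\Lambda\to0$ with $p(r_B)\cong r_B$.

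Next I identify $p(r_B)$ with $\mathscr{C}^\vee$. The subgroup of $\G$ corresponding to $G_{(l)}$ is $H=(r_B)^\perp$, so $(H+\L)^\perp=r_B\cap\L^\perp=0$, whence $G_{(l)}+L=G$ and $[G_{(l)}:G_{(l)}\cap L]=[G:L]=|\L^\perp|$. Projecting $r_B=H^\perp$ into $\L$ and using orthogonality of $\L$ and $\L^\perp$, each element of $p(r_B)$ annihilates $H\cap\L$, so $p(r_B)\subseteq(H\cap\L)^{\perp_{\L}}$ (perp inside $\L$ for $\Omega|_{\L}$); comparing orders via $[H+\L:\L]=|\L^\perp|$ and injectivity of $p|_{r_B}$ forces equality. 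Since $\Omega|_{\L}$ is nondegenerate, $(H\cap\L)^{\perp_{\L}}\cong(\L/(H\cap\L))^\vee$, and $\L/(H\cap\L)\cong L/(G_{(l)}\cap L)=\mathscr{C}$; hence $p(r_B)\cong\mathscr{C}^\vee$, proving the asserted exact sequence $0\to\mathscr{C}^\vee\to\L\to\Lambda\to0$.

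For $\Lambda\cong(G_{(l)}\cap L)^\vee$: dually, $\Lambda=G_{(r)}\cap G_2^\perp$ has $\Lambda^\perp=G_{(r)}^\perp+\G_2=r'_B+\G_2$, so $\Lambda^\vee\cong\G/(r'_B+\G_2)\cong G_{(l)}/t'_B(\G_2)$. Now $t'_B|_{\G_2}$ is injective and $t'_B(\G_2)\cap L=0$ — both because, by the Lemma~\ref{lem:L1} identity, $t'_B(\eta)\in L$ for $\eta\in\G_2$ forces $\Omega(T\alpha,\eta)^2=1$ for all $\alpha\in\Gamma_1$, i.e.\ $\eta\in\G_2\cap\G_2^\perp=0$ — so $|t'_B(\G_2)|=|\G_2|=|\L^\perp|=[G_{(l)}:G_{(l)}\cap L]$. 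Therefore $G_{(l)}=(G_{(l)}\cap L)\oplus t'_B(\G_2)$, giving $G_{(l)}/t'_B(\G_2)\cong G_{(l)}\cap L$ and hence $\Lambda\cong(G_{(l)}\cap L)^\vee$. The conceptual obstacle is the crux step — recognizing that $t_B$ matches the subgroup $\L$, which is \emph{a priori} unrelated to $G_2$, with the complement $\Lambda$ of $G_2$ inside $G_{(r)}$, which is exactly where the antisymmetry of $\S$ (through equation~\eqref{eq:seq}) and the nondegeneracy hypotheses on $l$ enter; the part most prone to bookkeeping slips is the last two paragraphs, where the two radicals of $B$ and the transport of the duality $G_{(r)}\cong G_{(l)}^\vee$ must be kept perfectly aligned. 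No property of $\S$ beyond equation~\eqref{eq:seq} (equivalently, Lemma~\ref{lem:L1}) is used.
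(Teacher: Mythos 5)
Your proof is correct, but it takes a genuinely different route from the paper's. The paper's proof is short and Hopf-theoretic: it invokes Corollary~\ref{cor:R} together with the Radford duality $t_{R^J}\colon (R^J_{(l)})^\ast\overset{\cong}\to R^J_{(r)}$ to identify $\Lambda^\vee$ with the grouplikes of $R^J_{(l)}$, observes that $G_{(l)}\cap L$ consists of such grouplikes (by Lemma~\ref{lem:grouplike0}), asserts $|G_{(l)}\cap L|=|\Lambda|=|G_{(r)}/G_2|=|G_{(l)}/G_1|$, and concludes $\Lambda\cong(G_{(l)}\cap L)^\vee$, with the exact sequence then coming from dualizing the inclusion $(G_{(l)}\cap L)\subseteq L$. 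In contrast, you work entirely at the level of the bicharacter $B=\S^{-2}\Omega$ on the finite Cartan group $\G$, never touching the full Hopf algebras $R^J_{(\heartsuit)}$: you show $t_B$ matches $t_B^{-1}(G_2^\perp)$ with $\L$ via the projection $p$, identify $p(r_B)$ with $\mathscr{C}^\vee$ through the radical $r_B=\ker t_B=H^\perp$, and obtain the dual isomorphism by computing $\Lambda^\perp=r'_B+\G_2$. Your approach buys self-containedness and explicitness — it depends only on the displayed identity in Lemma~\ref{lem:L1} and equation~\eqref{eq:seq}, not on Proposition~\ref{prop:R} or Corollary~\ref{cor:R} — and in particular it supplies a complete verification of the cardinality $|G_{(l)}\cap L|=|\Lambda|$ that the paper's terse proof leaves to the reader (it follows, as you show, from $G_{(l)}+L=G$ and $[G:L]=|\L^\perp|=|G_1|$). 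The paper's approach buys brevity once the structural results on $R^J_{(\heartsuit)}$ are in hand, and ties the group $\Lambda$ directly to the Hopf-algebraic data (grouplikes) that Theorem~\ref{thm:dual} subsequently uses.

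One very small notational remark: after the sentence ``Identify $G$ with $\G$ via $\Omega$'' you freely write $\G_2$ where a strict reading would demand $G_2$ (e.g.\ ``$t_B$ carries $\L^\perp$ isomorphically onto $\G_2$, which meets $G_2^\perp$ trivially'' and ``$\Lambda^\perp=r'_B+\G_2$''). This is harmless given the declared identification, but worth stating once explicitly if this were to be written up.
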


\begin{proof}
The dual $\Lambda^\vee$ gives the character group of $(R^J_{(l)})^\ast$, by Corollary~\ref{cor:R}.  The character group is identified with the group of grouplikes in $R^J_{(l)}$.  Since the intersection $G_{(l)}\cap L$ provides exactly $|\Lambda|=|G_{(r)}/G_2|=|G_{(l)}/G_1|$ grouplike elements in $R^J_{(l)}$ we see that $\Lambda=(G_{(l)}\cap L)^\vee$.  Whence we have an exact sequence $0\to \mathscr{C}^\vee\to \L\to \Lambda\to 0$.
\end{proof}

\subsection{Irreducible representations of $(u^J_q)^\ast$}

We take $\p^{ss}$ to be either of the (isomorphic) Lie algebras $\p_1^{ss}$ or $\p_2^{ss}$.  In this section we prove

\begin{theorem}\label{thm:dual}
There is a bijection
\[
\mathrm{Irrep}\left(\C[\L]\ox u_q(\p^{ss})\right)\overset{\cong}\to \mathrm{Irrep}\left((u_q^J)^\ast\right)
\]
given by restricting along an algebra surjection $(u_q^J)^\ast\to \C[\L]\ox u_q(\p^{ss})$.
\end{theorem}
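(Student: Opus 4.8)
The plan is to exhibit an algebra surjection $(u_q^J)^\ast \to \C[\L]\ox u_q(\p^{ss})$ and show that every irreducible $(u_q^J)^\ast$-module factors through it and that distinct irreducibles of $\C[\L]\ox u_q(\p^{ss})$ stay distinct after pullback. We already have from~\eqref{eq:map} and Corollary~\ref{cor:R} an algebra map $\Psi:(u_q^J)^\ast \to \C[\L]\ox R^J_{(r)}/\N \cong \C[\L]\ox\C[\Lambda]\ox u_q(\p_2^{ss})$. Since $\C[\L]\ox\C[\Lambda]$ is a commutative group ring, the target is a tensor product of a commutative semisimple algebra with the small quantum group $u_q(\p^{ss})$, so the irreducibles of the target are just characters of $\L\times\Lambda$ tensored with irreducibles of $u_q(\p^{ss})$. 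The first task is to reconcile this with the stated target $\C[\L]\ox u_q(\p^{ss})$: using Lemma~\ref{lem:L6}, which gives an exact sequence $0\to \mathscr{C}^\vee\to \L\to\Lambda\to 0$, one sees that the composite $\L \to \Lambda$ splits off the extra $\C[\Lambda]$ factor, so that $\Psi$ can be corrected (by reparametrizing via this splitting) to land on $\C[\L]\ox u_q(\p^{ss})$ as a genuine surjection. I would spell out this bookkeeping first, since it is the source of the group $\L$ of order $l|\Gamma-\Gamma_1|$ appearing in the statement.

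The second and main task is surjectivity of $\Psi$ onto $\C[\L]\ox u_q(\p^{ss})$ and the claim that pullback along $\Psi$ gives a bijection on irreducibles. For surjectivity: the $\C[\L]$ factor is hit because the grouplikes $K_\mu\in L$ of $u_q^J$ (Lemma~\ref{lem:grouplike0}) furnish, dually, the full character group $\L$ in $(u_q^J)^\ast$; the $u_q(\p^{ss})$ factor is hit because $t_{R^J}$ is already surjective onto $R^J_{(r)}$ (Section~\ref{sect:lrsubalg}, Radford's proposition) and $R^J_{(r)}$ surjects onto $\C[\Lambda]\ox u_q(\p_2^{ss})$ by Corollary~\ref{cor:R}. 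Once $\Psi$ is a surjection of algebras, pullback of modules is automatically injective on isomorphism classes of irreducibles, so the content is that it is also \emph{surjective}, i.e.\ every irreducible $(u_q^J)^\ast$-module is annihilated by $\ker\Psi$. For this I would argue that $\ker\Psi$ is contained in the Jacobson radical of $(u_q^J)^\ast$: the dual $(u_q^J)^\ast$ is, as an algebra, built from the parabolic picture, and the ideal $\N$ defining the quotient $R^J_{(r)}/\N$ is nilpotent by Lemma~\ref{lem:parbol}, while the remaining ``missing'' directions in $(u_q^J)^\ast$ relative to $\C[\L]\ox R^J_{(l)}$ also land in nilpotent pieces because of the triangular/PBW-type decomposition of $u_q^J$ (which is the same vector space as $u_q$ with the Lusztig basis of Theorem~\ref{thm:basis}). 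Concretely, the Hopf surjection $\C[L]\ltimes R^J_{(l)}\to u_q(\p_i)\hookrightarrow u_q^J$ dualizes, and one checks that the cokernel of $\C[\L]\ox R^J_{(r)}\hookleftarrow$ the image of $(u_q^J)^\ast$ contributes only nilpotent elements; hence $\ker\Psi\subseteq \mathrm{rad}(u_q^J)^\ast$ and every simple module factors through $\Psi$.

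The third, easier, task is to check that distinct irreducibles downstairs pull back to distinct, still-irreducible, modules upstairs — immediate from surjectivity of $\Psi$ — and to count: $\mathrm{Irrep}(\C[\L]\ox u_q(\p^{ss})) = \L^\vee \times \mathrm{Irrep}(u_q(\p^{ss}))$, which matches the claimed bijection. I expect the genuine obstacle to be the inclusion $\ker\Psi\subseteq \mathrm{rad}(u_q^J)^\ast$: one must control precisely which ``extra'' generators of $(u_q^J)^\ast$ beyond those coming from $\C[\L]\ox R^J_{(l)}$ survive, and show they act nilpotently on any simple module. The cleanest route is probably to use that $u_q^J = u_q$ as an algebra, together with the explicit $\C[G]$-basis in the $\E_\mu,\F_\nu$ coordinates of Section~\ref{sect:radford2}, to write $(u_q^J)^\ast$ as an iterated extension whose semisimple quotient is exactly $\C[\L]\ox u_q(\p^{ss})$; the dual cocycle structure only affects comultiplication, not this algebra-level radical computation, so the bulk of the work is the same PBW/nilpotence argument already used in the proof of Lemma~\ref{lem:parbol}.
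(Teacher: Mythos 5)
There is a genuine gap in your argument, and it occurs at the very first step. Your map $\Psi:(u_q^J)^\ast \to \C[\L]\ox R^J_{(r)}/\N \cong \C[\L]\ox\C[\Lambda]\ox u_q(\p^{ss})$ is not a surjection, and no ``reparametrization via the splitting'' can make it one. The target of $\Psi$ has dimension $|\L|\cdot|\Lambda|\cdot\dim u_q(\p^{ss})$, while the codomain in the theorem, $\C[\L]\ox u_q(\p^{ss})$, has dimension $|\L|\cdot\dim u_q(\p^{ss})$ — they differ by a factor of $|\Lambda|$, which is nontrivial whenever $G_{(l)}\cap L\neq 1$. Dually, $\Psi$ comes from the multiplication map $\C[L]\ltimes R^J_{(l)}\to u_q^J$, and since $|L|\cdot|G_{(l)}|$ exceeds $|G|$ by exactly this factor of $|\Lambda|$, that map is not injective, so its dual is not surjective. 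Lemma~\ref{lem:L6}'s exact sequence $0\to\mathscr{C}^\vee\to\L\to\Lambda\to 0$ does not collapse $\C[\L]\ox\C[\Lambda]$ to $\C[\L]$; that sequence gives only the abstract isomorphism $\C[\mathscr{C}^\vee\times\Lambda]\cong\C[\L]$ used after one has identified the image correctly.

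The paper's surjection is a different map entirely: it is the dual of the inclusion $\mathtt{Int}\hookrightarrow u_q^J$, where $\mathtt{Int}=u_q(\p_1)\cap u_q(\p_2)$, followed by the coalgebra identification $\mathtt{Int}\cong\C[\mathscr{C}]\ox A$ of Lemma~\ref{lem:L5} and the abstract group-ring isomorphism. What you call $\Psi$ (the paper's $F$) factors through $\mathtt{Int}^\ast$, but you need the intersection of \emph{both} parabolics to locate the right quotient. Moreover, the nilpotence of $\ker\bigl((u_q^J)^\ast\to\mathtt{Int}^\ast\bigr)$ is established by combining $F$ with its mirror $F'$ (onto $\C[\L]\ox R^J_{(l)}/\N'$) via $\Delta$ and using that the undivided map into $(\C[\L]\ox R^J_{(r)})\ox(\C[\L]\ox R^J_{(l)})$ is an embedding because its dual is a surjection. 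Using only one side, as you do, is not enough: the image of $F$ alone does not see the elements of $(u_q^J)^\ast$ vanishing on $u_q(\p_1)$ but not on $u_q(\p_2)$.

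There is also a conceptual slip in your ``cleanest route'' paragraph. You say the dual cocycle ``only affects comultiplication, not this algebra-level radical computation'' on $(u_q^J)^\ast$. But the \emph{algebra} structure of $(u_q^J)^\ast$ is precisely the dual of the \emph{coalgebra} structure of $u_q^J$, which is the twisted one. So the algebra $(u_q^J)^\ast$ is genuinely different from $(u_q)^\ast$, and its radical cannot be read off from the PBW basis of $u_q$ without invoking the coproduct $\Delta^J$. Your proposed shortcut would compute the radical of $(u_q)^\ast$, which is unrelated to the theorem being proved.
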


\begin{remark}
In Theorem~\ref{thm:dual} we take advantage of the existence of an abstract algebra isomorphism $\C[\mathscr{C}^\vee\times \Lambda]\cong \C[\L]$, where $\mathscr{C}$ is as in Lemma~\ref{lem:L6}.  Such an isomorphism exists simply because both groups are abelian of the same order, by Lemma~\ref{lem:L6}.  However, as we'll see below, the character group of the dual $(u_q^J)^\ast$ is naturally identified with $L$, so that the appearance of $\L$ is appropriate.
\end{remark}

Before giving the proof we establish some background material.

\begin{lemma}\label{lem:L4}
The subcoalgebra $A$ in $R^J_{(l)}$ dual to the quotient $R^J_{(r)}/\N$, under the Hopf isomorphism $t_{R^J}:(R^J_{(l)})^\ast\to R^J_{(r)}$, is exactly the subalgebra $\C\langle G_{(l)},\E_\alpha,\F_\beta:\alpha\in \Gamma_2,\ \beta\in \Gamma_1\rangle$.
\end{lemma}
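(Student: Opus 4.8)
The plan is to identify $A$ through the perfect pairing between $R^J_{(l)}$ and $R^J_{(r)}$ coming from the Hopf isomorphism $t_{R^J}:(R^J_{(l)})^\ast\overset{\cong}\to R^J_{(r)}$ of Section~\ref{sect:lrsubalg}. Since $t_{R^J}$ is an algebra isomorphism, $t_{R^J}^{-1}(\N)$ is a two-sided ideal of $(R^J_{(l)})^\ast$ and $A$ is, by construction, its annihilator in $R^J_{(l)}$; equivalently $A^\perp=t_{R^J}^{-1}(\N)$ inside $(R^J_{(l)})^\ast$, i.e.\ $t_{R^J}(A^\perp)=\N$. Writing $A'=\C\langle G_{(l)},\E_\alpha,\F_\beta:\alpha\in\Gamma_2,\ \beta\in\Gamma_1\rangle$ for the proposed subalgebra, it therefore suffices to prove $t_{R^J}((A')^\perp)=\N$; since $t_{R^J}$ is an isomorphism this in turn follows from (a) the inclusion $t_{R^J}((A')^\perp)\subseteq\N$, together with (b) the dimension identity $\dim A'=\dim(R^J_{(r)}/\N)$: indeed (b) gives $\dim(A')^\perp=\dim R^J_{(l)}-\dim A'=\dim R^J_{(r)}-\dim(R^J_{(r)}/\N)=\dim\N$, which forces (a) to be an equality, whence $A^\perp=(A')^\perp$ and $A=A'$.

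For (a) I would use the explicit form of $R^J$ from Part II of the proof of Proposition~\ref{prop:R}: there $R^J$ is a sum of monomial terms, each a group element times a bicharacter times an element $((\prod_i\F_{\xi_i})\E_\zeta(\prod_j\E_{\eta_j}))\ox((\prod_i\E_{T^i\xi_i})\F_\zeta(\prod_j\F_{T^{-j}\eta_j}))$, with the $\F_{\xi_i}$ supported on $\Gamma_1$, the $\E_{\eta_j}$ supported on $\Gamma_2$, and $\zeta$ ranging over the positive root lattice. Grouping these terms according to whether $\zeta\in\mathbb{Z}_{\geq 0}\Gamma_2$ gives $R^J=R_{\mathrm{good}}+R_{\mathrm{bad}}$. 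If $\zeta\in\mathbb{Z}_{\geq 0}\Gamma_2$ then every simple root occurring lies in $\Gamma_2$, so rewriting the first leg in Lusztig's basis (Theorem~\ref{thm:basis}) keeps its $\F$'s supported on $\Gamma_1$ and its $\E$'s supported on $\Gamma_2$ --- the $\F$'s already stand to the left of the $\E$'s, so no contraction occurs --- and, combined with $R^J\in R^J_{(l)}\ox R^J_{(r)}$ (immediate from the definition of the left and right subalgebras), this puts $R_{\mathrm{good}}$ in $(R^J_{(l)}\cap\C\langle G,\E_\alpha,\F_\beta:\alpha\in\Gamma_2,\ \beta\in\Gamma_1\rangle)\ox R^J_{(r)}=A'\ox R^J_{(r)}$, the intersection just written being $A'$ by a comparison of Lusztig bases. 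If instead $\zeta\notin\mathbb{Z}_{\geq 0}\Gamma_2$, some $\gamma\in\Gamma-\Gamma_2$ lies in $\mathrm{supp}(\zeta)$, so $\F_\zeta$ --- and hence the whole second leg --- lies in the two-sided ideal $N=(\F_\gamma:\gamma\in\Gamma-\Gamma_2)$ of $u_q(\p_2)$, hence in $\N=N\cap R^J_{(r)}$; thus $R_{\mathrm{bad}}\in R^J_{(l)}\ox\N$. Applying $(f\ox 1)$ to $R^J=R_{\mathrm{good}}+R_{\mathrm{bad}}$ for $f\in(A')^\perp$ annihilates the first summand and carries the second into $\N$, establishing (a).

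For (b) I would again use Lusztig's basis: a closure-under-multiplication check identifies the set $\{(\prod_{\nu\in\Phi^+\cap\mathbb{Z}\Gamma_1}\F_\nu^{m_\nu})(\prod_{\mu\in\Phi^+\cap\mathbb{Z}\Gamma_2}\E_\mu^{n_\mu})g:0\le m_\nu,n_\mu\le l-1,\ g\in G_{(l)}\}$, a subset of Lusztig's basis for $u_q^J$, as a basis of $A'$ --- the only products requiring straightening are $\E_\mu\F_\nu$ with $\mu\in\Phi^+\cap\mathbb{Z}\Gamma_2$, $\nu\in\Phi^+\cap\mathbb{Z}\Gamma_1$, and by the root-lattice grading these rewrite using only $\F$'s supported on $\Gamma_1$, $\E$'s supported on $\Gamma_2$, and group elements $K_\sigma$ with $\sigma\in\mathbb{Z}_{\geq 0}(\Gamma_1\cap\Gamma_2)$, which lie in $G_1\subseteq G_{(l)}$ (the inclusion $G_1\subseteq G_{(l)}$ being the mirror of $G_2\subseteq G_{(r)}$ from Lemma~\ref{lem:L1}, proved by the same computation with equation~\eqref{eq:seq}). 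Hence $\dim A'=|G_{(l)}|\,l^{|\Phi^+\cap\mathbb{Z}\Gamma_1|+|\Phi^+\cap\mathbb{Z}\Gamma_2|}$, and since $T$ is a Dynkin-diagram isomorphism the two exponents coincide while $|G_{(l)}|=|G_{(r)}|=|\Lambda|\,|G_2|=|\Lambda|\,l^{|\Gamma_2|}$ by the splitting $G_{(r)}=\Lambda\times G_2$ and nondegeneracy of the form on $\G_2$; thus $\dim A'=|\Lambda|\,l^{|\Gamma_2|+2|\Phi^+\cap\mathbb{Z}\Gamma_2|}=|\Lambda|\dim u_q(\p_2^{ss})=\dim(R^J_{(r)}/\N)$ by Corollary~\ref{cor:R}. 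The main obstacle is (a): keeping the bookkeeping of the $R^J$-expansion honest --- above all, matching the part of the first leg that carries $\E$'s outside $\Gamma_2$ against the ideal $\N$ appearing in the corresponding second leg, and confirming that no residual group prefactor escapes $G_{(l)}$.
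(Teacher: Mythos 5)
Your overall strategy matches the paper's: frame the claim as $t_{R^J}((A')^\perp)=\N$, establish one containment using the explicit expansion of $R^J$ from Part II of the proof of Proposition~\ref{prop:R}, and then close with a dimension count. This is equivalent to the paper's argument, which shows $(1\ox\pi)(R^J)\in A\ox(R^J_{(r)}/\N)$ (your containment (a) is the dual statement: $f\in A^\perp$ gives $(f\ox\pi)(R^J)=\pi(t_{R^J}(f))=0$) and then likewise appeals to a dimension coincidence. Your part (b) fleshes out a step the paper leaves implicit, and it checks out: the basis count for $A'$, the identity $|\Phi^+\cap\mathbb{Z}\Gamma_1|=|\Phi^+\cap\mathbb{Z}\Gamma_2|$, and $|G_{(l)}|=|G_{(r)}|=|\Lambda|\,l^{|\Gamma_2|}$ all give $\dim A'=|\Lambda|\dim u_q(\p_2^{ss})=\dim(R^J_{(r)}/\N)$ by Corollary~\ref{cor:R}.

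The gap is precisely in the step you flagged as the main obstacle, and it is a real one. You claim $R_{\mathrm{good}}\in\bigl(R^J_{(l)}\cap\C\langle G,\E_\alpha,\F_\beta\rangle\bigr)\ox R^J_{(r)}$ and $R_{\mathrm{bad}}\in R^J_{(l)}\ox\N$, deducing both from $R^J\in R^J_{(l)}\ox R^J_{(r)}$. But that membership does not pass to the summands $R_{\mathrm{good}}$ and $R_{\mathrm{bad}}$: the individual monomial terms of the expansion carry group prefactors $g_1\in G$ and need not have first legs in $R^J_{(l)}$; likewise the second legs of $R_{\mathrm{bad}}$ land in the ideal $N\subset u_q(\p_2)$, not in $\N=N\cap R^J_{(r)}$. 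So neither decomposition summand is known to live in $R^J_{(l)}\ox R^J_{(r)}$, and the intersection you want cannot be taken termwise. The repair is to project first and intersect afterward, which is exactly what the paper does: since the bad terms have second legs in $N$, one has $(1\ox\pi)(R^J)=(1\ox\pi)(R_{\mathrm{good}})$, and this single element lies both in $R^J_{(l)}\ox(R^J_{(r)}/\N)$ (because $R^J\in R^J_{(l)}\ox R^J_{(r)}$ and $\pi$ acts on the second factor) and in $\C\langle G,\E_\alpha,\F_\beta:\alpha\in\Gamma_2,\beta\in\Gamma_1\rangle\ox(R^J_{(r)}/\N)$ (from the explicit monomials); flatness over $\C$ then puts it in $\bigl(R^J_{(l)}\cap\C\langle G,\ldots\rangle\bigr)\ox(R^J_{(r)}/\N)=A'\ox(R^J_{(r)}/\N)$, which is your (a). With that substitution your argument is correct and coincides with the paper's.
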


From the statement it is clear that $A$ is actually a Hopf subalgebra.  We are claiming that $A$ is the minimal subspace in $R^J_{(l)}$ admitting a factoring $(R^J_{(l)})^\ast\to A^\ast\to R^J_{(r)}/\N$.

\begin{proof}
Recall $\N$ is generated by all the $\F_\alpha$ with $\alpha\in \Gamma-\Gamma_2$.  For $\pi$ the projection $R^J_{(r)}\to R^J_{(r)}/\N$, one sees directly from the form of $R^J$ that $(1\ox \pi)(R^J)$ lay in the product $A'\ox (R^J_{(r)}/\N)$ where $A'=\C\langle G,\E_\alpha,\F_\beta:\alpha\in \Gamma_2,\ \beta\in \Gamma_1\rangle$.  But $(1\ox \pi)(R^J)$ also lay in $R^J_{(l)}\ox (R^J_{(r)}/\N)$ so that
\[
(1\ox \pi)(R^J)\in \left(R^J_{(l)}\ox (R^J_{(r)}/\N)\right)\cap \left(A'\ox (R^J_{(r)}/\N)\right).
\]
By flatness of everything over $\C$, this intersection is exactly $A\ox (R^J_{(r)}/\N)$.  So the surjective map $(R^J_{(l)})^\ast\to R^J_{(r)}/\N$ factors through $A^\ast$.  Since the dimensions of $A$ and $R^J_{(r)}/\N$ agree we must have that $A$ is in fact dual to $R^J_{(r)}/\N$.
\end{proof} 

The Hopf subalgebra $A$ is strongly related to the intersection of the quantum parabolics $u_q(\p_1)\cap u_q(\p_2)$, which we denote $\mathtt{Int}$.  From considering bases of the two quantum parabolics, as in Theorem~\ref{thm:basis}, one arrives at the presentation
\[
\mathtt{Int}=u_q(\p_1)\cap u_q(\p_2)=\C\langle G,\E_\alpha,\F_\beta:\alpha\in \Gamma_2,\ \beta\in \Gamma_1\rangle.
\]
Note that since the quantum parabolics are Hopf subalgebras, the intersection will be a Hopf subalgebra as well.
 
\begin{lemma}\label{lem:L5}
Take $\mathscr{C}=L/(G_{(l)}\cap L)$.  There is a coalgebra isomorphism $\C[\mathscr{C}]\ox A\to \mathtt{Int}$ given by multiplication.
\end{lemma}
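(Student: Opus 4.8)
The plan is to compare two explicit $\C[G]$-bases — one for $\mathtt{Int}$ coming from Theorem~\ref{thm:basis}, and one for $\C[\mathscr C]\ox A$ coming from Lemma~\ref{lem:L4} — and show that multiplication carries the latter to (a rescaling of) the former, hence is a bijection; then separately check that multiplication is a coalgebra map.

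\emph{Step 1: multiplication is a coalgebra map.} By Lemma~\ref{lem:grouplike0} every $K_\mu\in L$ is grouplike in $u_q^J$, and $A\subset R^J_{(l)}\subset u_q(\p_1)$ is a Hopf subalgebra of $u_q^J$ with $\mathtt{Int}$; so the map $\C[L]\ox A\to u_q^J$, $g\ox a\mapsto ga$, is a coalgebra map (product of a grouplike and a subcoalgebra element), with image inside $\mathtt{Int}$ since $L\subset \mathtt{Int}$ and $A\subset\mathtt{Int}$. It remains to see this descends along $\C[L]\twoheadrightarrow\C[\mathscr C]$, i.e. that elements of $G_{(l)}\cap L$ already lie in $A$: indeed $G_{(l)}\subset R^J_{(l)}$ by Lemma~\ref{lem:L1} (applied to $J'$/via the $\phi$-symmetry of Section~\ref{sect:proof}), and $G_{(l)}\cap L\subset G\subset A$, so the grouplikes in question are inside $A$ and $\C[\mathscr C]\ox A\to\mathtt{Int}$ is a well-defined coalgebra map. (One chooses a set-theoretic section $\mathscr C\to L$ to make the domain honest; the coalgebra structure on the domain is the tensor product one.)

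\emph{Step 2: count and match bases.} Using Theorem~\ref{thm:basis} and the root-lattice grading, $\mathtt{Int}=\C\langle G,\E_\alpha,\F_\beta:\alpha\in\Gamma_2,\beta\in\Gamma_1\rangle$ is free over $\C[G]$ with basis the ordered monomials $(\prod_{\nu}\F_\nu^{m_\nu})(\prod_{\mu}\E_\mu^{n_\mu})$ where $\mu$ ranges over $\Phi^+\cap\mathbb Z\Gamma_2$ and $\nu$ over $\Phi^+\cap\mathbb Z\Gamma_1$, so $\dim\mathtt{Int}=|G|\,l^{|\Phi^+\cap\mathbb Z\Gamma_2|+|\Phi^+\cap\mathbb Z\Gamma_1|}$. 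On the other side, $A=\C\langle G_{(l)},\E_\alpha,\F_\beta:\alpha\in\Gamma_2,\beta\in\Gamma_1\rangle$ is free over $\C[G_{(l)}]$ with the analogous monomial basis (same root sets, since the $\E$- and $\F$-content of $A$ is cut out by exactly $\Gamma_2$ and $\Gamma_1$), so $\dim A=|G_{(l)}|\,l^{|\Phi^+\cap\mathbb Z\Gamma_2|+|\Phi^+\cap\mathbb Z\Gamma_1|}$, and $\dim\C[\mathscr C]\ox A=|\mathscr C|\,|G_{(l)}|\,l^{(\cdots)}=|L/(G_{(l)}\cap L)|\,|G_{(l)}|\,l^{(\cdots)}$. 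By Lemma~\ref{lem:L6}, $|L/(G_{(l)}\cap L)|=|\mathscr C|$ and $|G_{(l)}\cap L|\cdot|\Lambda|=|L|$ with $|\Lambda|=|G_{(l)}|/|G_1|$ etc.; the key numerical input is simply $|\mathscr C|\cdot|G_{(l)}|=|L|\cdot|G_{(l)}|/|G_{(l)}\cap L|=|L\cdot G_{(l)}|=|G|$, the last equality because $L\cdot G_{(l)}\supseteq L\cdot G_1=$ all of $G$ under the splitting $G=L\times G_1$ of Lemma~\ref{lem:GT} (as $G_1\subset G_{(l)}$, which is the $\Gamma_1$-analogue of $G_2\subset G_{(r)}$ from Lemma~\ref{lem:L1}). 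Hence $\dim\C[\mathscr C]\ox A=\dim\mathtt{Int}$.

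\emph{Step 3: surjectivity, hence bijectivity.} It suffices to show the coalgebra map from Step~1 is surjective; equality of dimensions from Step~2 then forces it to be an isomorphism. For surjectivity, note the image is a subalgebra containing all $\E_\alpha$ ($\alpha\in\Gamma_2$), all $\F_\beta$ ($\beta\in\Gamma_1$), all of $G_{(l)}$ (from $A$), and all of $L$ (from $\C[\mathscr C]$, via the section); since $L\cdot G_{(l)}=G$ as just computed, the image contains $\C[G]$ together with the $\E_\alpha,\F_\beta$ generators of $\mathtt{Int}$, hence is all of $\mathtt{Int}$.

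\emph{Main obstacle.} The routine part is the monomial bookkeeping; the genuinely delicate point is the group-order identity $|\mathscr C|\,|G_{(l)}|=|G|$, i.e. that $L$ and $G_{(l)}$ together generate $G$. This relies on the splitting $G=L\times G_1$ from Lemma~\ref{lem:GT} (valid under the running hypotheses on $l$) and on the inclusion $G_1\subset G_{(l)}$ — the left-handed mirror of Lemma~\ref{lem:L1}'s $G_2\subset G_{(r)}$, obtained by transporting along the automorphism $\phi$ of Section~\ref{sect:proof}. Once that inclusion and splitting are in hand the counting closes immediately, so I would state and justify $G_1\subset G_{(l)}$ carefully first and treat everything else as bookkeeping.
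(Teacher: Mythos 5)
Your proposal is correct and follows essentially the same route as the paper: multiply after choosing a set-theoretic section $\mathscr{C}\to L$, use that $L$ is grouplike in $u_q^J$ (Lemma~\ref{lem:grouplike0}) and $A$ is a Hopf subalgebra to see the map is a coalgebra morphism, and conclude via the bijection $\mathscr{C}\times G_{(l)}\to G$ coming from $L\cdot G_{(l)}=G$ (itself from $G=L\times G_1$ and the left-handed analogue $G_1\subset G_{(l)}$ of Lemma~\ref{lem:L1}). The paper's own proof is terser — it asserts the restricted multiplication is an embedding with image $\mathtt{Int}$ rather than running the dimension count and surjectivity separately — but the underlying observations are identical, and your correct identification of $|\mathscr{C}|\,|G_{(l)}|=|G|$ (equivalently $G_1\subset G_{(l)}$) as the one nontrivial input matches where the actual content lies.
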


\begin{proof}
We have the multiplication map $\C[L]\ox A\to u_q^J$ which is a surjection onto the intersection $\mathtt{Int}$.  Choose for each $\bar{\xi}\in \mathscr{C}$ a representative $\xi\in L$ and restrict the above multiplication map to get an coalgebra embedding
\[
\C[\mathscr{C}]\ox A=\bigoplus_{\bar{\xi}\in\mathscr{C}}\C\bar{\xi}\ox A\to u_q^J,\ \ \bar{\xi}\ox a\mapsto \xi\cdot a,
\]
with image exactly $\mathtt{Int}$.
\end{proof}

We have now the

\begin{proof}[Proof of Theorem~\ref{thm:dual}]
Let $K$ be the kernel of the projection $(u_q^J)^\ast\to \mathtt{Int}^\ast$ dual to the inclusion $\mathtt{Int}\to u_q^J$.  Note that $K$ will be a Hopf ideal in the dual.  We have the Hopf maps $(u_q^J)^\ast\to \C[\L]\ox R^J_{\heartsuit}$ of~\eqref{eq:map} which factor
\[
(u_q^J)^\ast\overset{\Delta}\to (u_q^J)^\ast\ox (u_q^J)^\ast\overset{(?)|L\ox t_{R^J}}\longrightarrow \C[\L]\ox R^J_{\heartsuit}.
\]
We claim that the induced maps
\[
F:(u_q^J)^\ast\to \C[\L]\ox R^J_{(r)}/\N\ \ \mathrm{and}\ \ F':(u_q^J)^\ast\to \C[\L]\ox R^J_{(l)}/\N'
\]
factor through $\mathtt{Int}^\ast$.  Equivalently, we claim that $K$ is in their kernels.  Let $\pi$ and $\pi'$ be the projections $\pi:R^J_{(r)}\to R^J_{(r)}/\N$ and $\pi':R^J_{(l)}\to R^J_{(l)}/\N'$.
\par

We prove the result for $R^J_{(r)}$.  Recall that $\N$ is the ideal generated by all the $\F_\alpha$ with $\alpha\in \Gamma-\Gamma_2$, and that $K$ consists of all functions vanishing on $\mathtt{Int}$.  Note that the intersection contains all of $\C[G]$, so that $K|L=0$.  Hence for each $f\in K$ we have
\[
F(f)=\sum_i (f_{i_1}|L)\ox \pi t_{R^J}(f_{i_2})=\sum_i (f_{i_1}|L)\ox \left((f_{i_2}\ox \pi)(R^J)\right)
\]
for some $f_{i_2}\in K$.  So it suffices to show $(K\ox \pi)(R^J)=0$.  However, we have already seen in Lemma~\ref{lem:L4} that $(1\ox \pi)(R^J)$ lay in $A\ox R^J_{(r)}/\N$, and $A\subset \mathtt{Int}$.  Hence $(f\ox \pi)(R^J)=0$ for each $f\in K$, and we find $(K\ox \pi)(R^J)=0$.  So the map $F$ factors through $\mathtt{Int}^\ast$, and a completely analogous argument shows that $F'$ factors through $\mathtt{Int}^\ast$ as well.
\par

Since $(u_q^J)^\ast\to\mathtt{Int}^\ast$ is a Hopf map the factorizations of $F$ and $F'$ imply that the map
\begin{equation}\label{eq:1221}
(u^J_q)^\ast\overset{\Delta}\to (u_q^J)^\ast\ox (u_q^J)^\ast\overset{F\ox F'}\to (\C[\L]\ox R^J_{(r)}/\N)\ox (\C[\L]\ox R^J_{(l)}/\N')
\end{equation}
factors
\begin{equation}\label{eq:1225}
(u^J_q)^\ast\to\mathtt{Int}^\ast\to (\C[\L]\ox R^J_{(r)}/\N)\ox (\C[\L]\ox R^J_{(l)}/\N').
\end{equation}
We note that that the map
\[
(u^J_q)^\ast\overset{\Delta}\to (u_q^J)^\ast\ox (u_q^J)^\ast\to (\C[\L]\ox R^J_{(r)})\ox (\C[\L]\ox R^J_{(l)})
\]
is an embedding, since its dual is a surjection, so that the kernels of~\eqref{eq:1221} and~\eqref{eq:1225} are nilpotent.  It follows that the kernel of the projection $(u^J_q)^\ast\to\mathtt{Int}^\ast$ must be nilpotent as well.
\par

We have from Lemmas~\ref{lem:L4} and~\ref{lem:L5} that $\mathtt{Int}^\ast\cong \C[\mathscr{C}^\vee]\ox R^J_{(r)}/\N$.  Recall from Corollary~\ref{cor:R} that $R^J_{(r)}/\N$ is isomorphic to $\C[\Lambda]\ox u_q(\p^{ss})$ and that $\C[\mathscr{C}^\vee\times\Lambda]\cong \C[\L]$, abstractly, to arrive at a surjection $(u^J_q)^\ast\to\C[\L]\ox u_q(\p^{ss})$ with nilpotent kernel.  Restricting then gives the proposed bijection on irreducible representations.
\end{proof}

\begin{corollary}\label{cor:grouplikes}
The set of grouplikes $G(u_q^J)$ is exactly $L$.
\end{corollary}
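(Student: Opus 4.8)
The plan is to identify the grouplikes of $u_q^J$ with the algebra characters of its dual and then read those off from Theorem~\ref{thm:dual}. Since $u_q^J=u_q$ as an algebra it is finite dimensional, so the grouplike elements of $u_q^J$ are precisely the algebra maps $(u_q^J)^\ast\to\C$, i.e. the one-dimensional $(u_q^J)^\ast$-representations; thus $G(u_q^J)=\Hom_{\mathrm{alg}}\big((u_q^J)^\ast,\C\big)$.

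First I would invoke the algebra surjection $(u_q^J)^\ast\to\C[\L]\ox u_q(\p^{ss})$ of Theorem~\ref{thm:dual}, whose kernel is nilpotent. Any algebra map to $\C$ annihilates a nilpotent ideal, so every character of $(u_q^J)^\ast$ factors through this surjection, and conversely every character of $\C[\L]\ox u_q(\p^{ss})$ pulls back; hence $G(u_q^J)\cong\Hom_{\mathrm{alg}}(\C[\L],\C)\times\Hom_{\mathrm{alg}}(u_q(\p^{ss}),\C)$. The first factor is $\Hom_{\mathrm{gp}}(\L,\C^\times)=\L^\vee$, which is the subgroup $L\subset G$ under the dualities of Section~\ref{sect:bg}. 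For the second factor I would observe that an algebra map $\chi\colon u_q(\p^{ss})\to\C$ must satisfy $\chi(K_\beta)^2=1$ (apply $\chi$ to $[E_\beta,F_\beta]=(K_\beta-K_\beta^{-1})/(q-q^{-1})$), hence $\chi(K_\beta)=1$ since $l$ is odd and $\chi(K_\beta)^l=1$, and then $\chi(E_\beta)=\chi(F_\beta)=0$ from $K_\beta E_\beta=q^2E_\beta K_\beta$ (using $q^2\ne1$); so $\chi=\epsilon$. This yields $|G(u_q^J)|=|L|$.

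To finish I would combine this count with Lemma~\ref{lem:grouplike0}, which already exhibits every $K_\mu$ with $\mu\in L$ as a grouplike of $u_q^J$ (recall $u_q^J=u_q$ as an algebra, so these are literally the same elements): since $L\subseteq G(u_q^J)$ and the two finite sets have the same cardinality, they coincide.

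I expect the only point needing care — more a bookkeeping subtlety than a genuine obstacle — to be the observation that the bijection of Theorem~\ref{thm:dual} restricts to a bijection on one-dimensional representations; this is immediate since that bijection is given by pullback along an algebra surjection, which does not change the dimension of a representation. One could alternatively bypass the cardinality argument by tracking the natural identification $\Hom_{\mathrm{alg}}(\C[\L],\C)=L$ through the construction of the surjection in Theorem~\ref{thm:dual} and matching it against the inclusion of Lemma~\ref{lem:grouplike0}, but that is no shorter.
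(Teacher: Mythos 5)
Your proposal is correct and follows the same strategy as the paper: invoke Lemma~\ref{lem:grouplike0} for the inclusion $L\subseteq G(u_q^J)$, then use Theorem~\ref{thm:dual} together with the identification of grouplikes with one-dimensional representations of the dual to get $|G(u_q^J)|=|L|$. The paper leaves the count $|\mathrm{Irrep}^{1\text{-dim}}(\C[\L]\ox u_q(\p^{ss}))|=|L|$ implicit, whereas you verify it explicitly by showing $u_q(\p^{ss})$ has only the trivial character; this is a welcome amplification but not a different route.
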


\begin{proof}
Since all the elements in $L$ are grouplike, by Lemma~\ref{lem:grouplike0}, we need only know that $|G(u_q^J)|=|L|$.  But this just follows from the theorem, since grouplikes in $u_q^J$ are identified with one dimensional representations of $(u_q^J)^\ast$.
\end{proof}

To compare $u_q$ to $u_q^J$ let us consider a maximal BD triple on $A_n$.  In this case there is a unique solution $\S$ to~\eqref{eq:seq} and, as mentioned previously, the algebras $R^J_{\heartsuit}$ will be the full parabolics.\footnote{This basically follows from the fact that $\G_2^\perp$ will be a free $\mathbb{Z}/l\mathbb{Z}$-module so that $\S(\mu,\nu)=0$ for any $\mu,\nu\in \G_2^\perp$, by antisymmetry.  Thus $\S^{-2}\Omega|\G_2^\perp\times\G_2^\perp=\Omega_{\G_2^\perp}$ and we must have all of $G_2^\perp$ in $G_{(r)}$.}  We will have, for $n=2$ and $l=5$ for example, a following variation in the dimensions of the coradicals:
\[
\dim\mathrm{Corad}(u(\sl_3))=25,\ \ \dim\mathrm{Corad}(u(\sl_3)^J)=105.
\]
The difference is made more stark from the fact that corepresentation theory of $u_q(\sl_{n+1})$ is essentially trivial, at least when we restrict our attention to the irreducibles and fusion rule, while the corepresentation theory of $u_q(\sl_{n+1})^J$ should be at least as complicated as the representation theory of $u_q(\sl_n)$.

\section{The Drinfeld element and properties of the antipode}
\label{sect:u}

Here we discuss preservation of the Drinfeld element under twisting.  Basic information on the Drinfeld element in a quasitriangular Hopf algebra, and its relation the antipode, can be found in~\cite{montgomery,EGNO}.  We fix a Belavin-Drinfeld triple $(\Gamma_1,\Gamma_2,T)$, solution $\S$, and twist $J=J_{T,\S}$ of $u_q(\g)$.

Let $\rho\in \G$ be the sum $\rho=\sum_{\mu\in\Phi^+}\mu$.  Then we have $(\rho,\alpha)=2$ for each simple root $\alpha$~\cite[Sect. 10.2]{humphreys72}.  This gives $S^2=\mathrm{ad}_{K_\rho}$ and the Drinfeld element for $u_q(\g)$ thus factors $u=K_\rho v$, where $v$ is a central element with $\Delta(v)=(v\ox v)(R_{21}R)^{-1}$, i.e. a ribbon element.  Note that $\rho$ is in $\L$ as $(\rho,\alpha-T(\alpha))=2-2=0$ for each $\alpha\in\Gamma$.  So $K_\rho$ remains grouplike in the twist $u_q(\g)^J$.
\par

Recall that under an arbitrary twist $J$ of a quasitriangular Hopf algebra $H$ the Drinfeld element for $H^J$ is the product $u^J=Q_J^{-1}S(Q_J)u$ (see e.g.~\cite{etingofgelaki02}).  In our case this means that the Drinfeld element for $u_q(\g)^J$ is given by
\[
u^J=Q_J^{-1}S(Q_J)K_\rho v.
\]
Centrality of $v$ implies 
\[
\Delta^J(v)=(v\ox v)J^{-1}(R_{21}R)^{-1}J=(v\ox v)(R^J_{21}R^J)^{-1}.
\]
So $v$ is still a ribbon element for the twist, and $Q_J^{-1}S(Q_J)K_\rho$ is grouplike in the twist.  Since $K_\rho$ itself is grouplike we conclude that $Q_J^{-1}S(Q_J)$ is grouplike as well.

\begin{proposition}\label{prop:preu}
The twists $J=J_{T,\S}$ are such that $Q_J^{-1}S(Q_J)=1$.
\end{proposition}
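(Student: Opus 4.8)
The starting point is the observation, already in place above, that $Q_J^{-1}S(Q_J)$ is grouplike in $u_q(\g)^J$; by Corollary~\ref{cor:grouplikes} (together with Lemma~\ref{lem:grouplike0}) we then have $Q_J^{-1}S(Q_J)=K_\zeta$ for a unique $\zeta\in\L$, so it suffices to show $\zeta=0$. The plan is to pin $K_\zeta$ down by isolating a single ``leading'' component of $Q_J$. Since the root-lattice grading on $u_q(\g)$ is a Hopf grading and one checks directly on the generators that $S$ preserves it, the element $Q_J=m\circ(S\ox 1)(J)=\sum S(J^{(1)})J^{(2)}$ decomposes accordingly: a homogeneous component $a\ox b$ of $J$ with $\deg a=\delta$ and $\deg b=-\epsilon$ contributes a term of degree $\delta-\epsilon$.

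The key device is a $\mathbb{Z}$-linear functional $\phi\colon\mathbb{Z}\Gamma\to\mathbb{Z}$ that separates the constant part of $J$ from the rest, built from the Belavin--Drinfeld nilpotence condition. Set $h(\alpha)=0$ for $\alpha\in\Gamma-\Gamma_1$ and, for $\alpha\in\Gamma_1$, let $h(\alpha)\geq 1$ be least with $T^{h(\alpha)}\alpha\in\Gamma-\Gamma_1$; put $\phi(\sum c_\alpha\alpha)=-\sum c_\alpha h(\alpha)$. Then $h(T\alpha)=h(\alpha)-1$ on $\Gamma_1$, so $\phi(T\mu)-\phi(\mu)=\mathrm{ht}(\mu):=\sum c_\alpha$ for every $\mu=\sum c_\alpha\alpha\in\mathbb{Z}_{\geq 0}\Gamma_1$, and therefore $\phi(T^k\mu-\mu)=k\,\mathrm{ht}(\mu)>0$ whenever $\mu$ is a positive root with $\mu,T\mu,\dots,T^{k-1}\mu\in\mathbb{Z}_{\geq 0}\Gamma_1$. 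Now write $J=\prod_{k=1}^n(T^k_+\ox 1)(R)\cdot B\cdot\prod_{k=1}^n(T^k\ox 1)(\Omega)^{-1}$ with $B=\S^{-1}\Omega_{L^\perp}^{-1/2}\in\C[G]^{\ox 2}$. Since $T^k_+(E_\mu)$ is homogeneous of degree $T^k\mu$, and is zero unless $\mu,\dots,T^{k-1}\mu\in\mathbb{Z}_{\geq 0}\Gamma_1$, a nontrivial homogeneous component $a\ox b$ of $\prod_k(T^k_+\ox 1)(R)$, with $\deg a=\sum_k n_kT^k\mu_k$ and $\deg b=-\sum_k n_k\mu_k$, has $\phi(\deg a+\deg b)=\sum_k n_k\,k\,\mathrm{ht}(\mu_k)>0$, while the component in which every $n_k=0$ contributes exactly $B$, of degree $(0,0)$. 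Hence $Q_J$ is supported in $\phi$-degrees $\geq 0$, with $\phi$-degree-zero part $(Q_J)_0=m\circ(S\ox 1)(B)\in\C[G]$.

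To finish, note $(Q_J)_0$ is a unit in $\C[G]$, so since $u_q(\g)$ is finite dimensional $Q_J^{-1}$ also lies in $\phi$-degrees $\geq 0$ with $\phi$-degree-zero part $(Q_J)_0^{-1}$, and $S(Q_J)$ has $\phi$-degree-zero part $S((Q_J)_0)$. Comparing $\phi$-degree-zero parts in $K_\zeta=Q_J^{-1}S(Q_J)$ — legitimate because both factors sit in nonnegative $\phi$-degrees and $K_\zeta$ has $\phi$-degree zero — gives $K_\zeta=(Q_J)_0^{-1}S((Q_J)_0)$. A short bicharacter computation then yields $(Q_J)_0=\sum_{\nu\in\G}B(-\nu,\nu)P_\nu=\sum_{\nu}\Omega_{L^\perp}(\nu,\nu)^{1/2}P_\nu$, where one uses that $\S$ is antisymmetric and $l$ is odd so that $\S(\nu,\nu)=1$; since $S(P_\nu)=P_{-\nu}$ and $\Omega_{L^\perp}(-\nu,-\nu)=\Omega_{L^\perp}(\nu,\nu)$, this element is $S$-invariant, so $K_\zeta=1$ and $Q_J^{-1}S(Q_J)=1$.

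The main obstacle is the second paragraph: establishing that $J_{T,\S}$ is ``upper triangular'' for $\phi$, with the constant term $B$ as its only $\phi$-degree-zero component. This is precisely where the nilpotence of $T$ is used (through the height function $h$), and it is the one place requiring care about the action of $T^k_+$ on the Lusztig root vectors; everything afterward is a routine manipulation of bicharacters on $\C[G]$.
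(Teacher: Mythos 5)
Your proof is correct and follows essentially the same strategy as the paper's: construct a $\mathbb{Z}$-valued functional on the root lattice (the paper calls the associated grading a ``$T$-grading,'' built from the directed graph of $T$; you build it from the forward distance $h(\alpha)$ to $\Gamma\setminus\Gamma_1$) whose key property is that the nilpotence of $T$ forces every non-constant component of $J$ into strictly positive degree, so that $Q_J$, $Q_J^{-1}$, $S(Q_J)$ are supported in nonnegative degrees and the grouplike $Q_J^{-1}S(Q_J)$ is determined by the degree-zero parts, which one then computes from the bicharacter $B=\S^{-1}\Omega_{L^\perp}^{-1/2}$. The only cosmetic differences are that the paper reads off the nonnegativity of $J^{-1}$ from its explicit product formula rather than via your Neumann-series argument, and the paper cancels $\S^{-1}\Omega_{L^\perp}^{-1/2}\cdot\S\Omega_{L^\perp}^{1/2}=1$ before applying $m$ rather than invoking $\S(\nu,\nu)=1$.
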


In the proof of the proposition we employ what we call a $T$-grading on $u_q^J$.  We define this as any algebra $\mathbb{Z}$-grading with the following properties:
\begin{enumerate}
\item[(a)] $\C[G]$ is homogeneous of degree $0$.
\item[(b)] The $E_\alpha$ are of positive degree and the $F_\alpha$ are of negative degree with $\deg(F_\alpha)=-\deg(E_\alpha)$.
\item[(c)] $\deg(E_{T\alpha})>\deg(E_\alpha)$ for each $\alpha\in \Gamma_1$.
\end{enumerate}
It is easy to construct such a grading.  For example, one can construct the acyclic directed graph $\mathtt{Graph}(\Gamma,T)$ with vertices $\Gamma$ and an arrow from $\alpha$ to $T(\alpha)$ for each $\alpha\in \Gamma_1$.  One then takes
\[
\deg(E_\alpha)=-\deg(F_\alpha)=|\mathtt{Graph}_{\leq\alpha}|,
\]
where $\mathtt{Graph}_{\leq\alpha}$ is the collection of all vertices with a path to $\alpha$ in $\mathtt{Graph}(\Gamma,T)$, including $\alpha$.  Note that the antipode preserves degree under any $T$-grading.

\begin{proof}
Under any $T$-grading on $u_q$ we will have that $J$ and $J^{-1}$ both lay in nonnegative degree in $u_q\ox u_q$, where $\deg(a\ox b)=\deg(a)+\deg(b)$ for $a,b\in u_q$.  This is clear from the explicit forms of the twist and its inverse given at Theorem~\ref{thm:twists} and~\eqref{eq:J21-1}.  We have also $J_0=\S^{-1}\Omega^{-1/2}_{L^\perp}$ and $(J^{-1})_0=\S\Omega^{1/2}_{L^\perp}$.  It follows, from the expressions of $Q_J$ and $Q_J^{-1}$ given in Section~\ref{sect:twinfo}, that both $Q_J^{-1}$ and $S(Q_J)$ lay in nonnegative degree with
\[
(Q_J^{-1})_0=m(\S^{-1}\Omega^{-1/2}_{L^\perp}),\ \ S(Q_J)_0=m(\S\Omega^{1/2}_{L^\perp}),
\]
where $m$ is multiplication.  We have now $\left(Q_J^{-1}S(Q_J)\right)_0=(Q_J^{-1})_0S(Q_J)_0$ and since the multiplication map on any commutative algebra, such as $\C[G]$, is a ring map
\[
(Q_J^{-1})_0S(Q_J)_0=m(\S^{-1}\Omega^{-1/2}_{L^\perp}\S\Omega^{1/2}_{L^\perp})=1.
\]
Finally we note that since $Q_J^{-1}S(Q_J)$ is grouplike it must lay in degree $0$.  Therefore $Q_J^{-1}S(Q_J)=\left(Q_J^{-1}S(Q_J)\right)_0=1$.
\end{proof}

As an immediate corollary we have

\begin{corollary}\label{cor:u}
The Drinfeld element for $u_q(\g)^J$ is equal to the Drinfeld element for $u_q(\g)$.
\end{corollary}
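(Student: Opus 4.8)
The plan is to deduce Corollary~\ref{cor:u} immediately from Proposition~\ref{prop:preu}. Recall the general formula recorded above: for any twist $J$ of a quasitriangular Hopf algebra $H$ with Drinfeld element $u$, the Drinfeld element of the twisted algebra $H^J$ is
\[
u^J=Q_J^{-1}S(Q_J)u,
\]
where $Q_J=m((S\ox 1)(J))$ and $m$ is multiplication. Taking $H=u_q(\g)$ and $J=J_{T,\S}$, I would simply invoke Proposition~\ref{prop:preu}, which asserts $Q_J^{-1}S(Q_J)=1$ for these Belavin--Drinfeld twists. Substituting this into the displayed formula gives $u^J=u$, which is exactly the claim.

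There is essentially no obstacle at the level of the corollary; all of the genuine work is carried out in Proposition~\ref{prop:preu} (via the $T$-grading argument, which forces the grouplike $Q_J^{-1}S(Q_J)$ into degree $0$ and then identifies its degree-zero component with $1$). The only point worth noting is that the formula $u^J=Q_J^{-1}S(Q_J)u$ is precisely the convention used in the discussion preceding the proposition (following~\cite{etingofgelaki02}), so no further normalization bookkeeping is required. As a consistency check one may also recall that the ribbon element $v$ with $u=K_\rho v$ is unchanged under the twist --- this was already observed above, since $\Delta^J(v)=(v\ox v)(R^J_{21}R^J)^{-1}$ and $v$ remains central --- which makes the equality $u^J=u$ compatible with $K_\rho$ staying grouplike in $u_q(\g)^J$.
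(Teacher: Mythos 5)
Your proposal is correct and follows exactly the paper's route: the corollary is an immediate consequence of Proposition~\ref{prop:preu} combined with the twisting formula $u^J=Q_J^{-1}S(Q_J)u$ recalled just before it. The paper itself states this corollary with no separate proof beyond the phrase ``as an immediate corollary,'' so your one-line deduction matches the intended argument.
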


\subsection{Implications for the antipode}

In~\cite{negronng} the question was posed as to whether or not the order of the antipode and the traces of the powers of the antipode are preserved under twisting.  The question was answered positively for Hopf algebras with the Chevalley property.  Using the expression of the Chevalley property given in~\cite[Prop. 4.2, 5]{aeg} it is relatively easy to see that no small quantum group has the Chevalley property.  We can, however, verify the proposed invariance for Belavin-Drinfeld twists.

\begin{corollary}\label{cor:S}
For $S$ the antipode on $u_q(\g)$ and $S_J$ the antipode on the twist $u_q(\g)^J$, and $J=J_{T,\S}$, we have $\mathrm{Tr}(S_J^m)=\mathrm{Tr}(S^m)$ for all $m\in\mathbb{Z}$ and $\mathrm{ord}(S_J)=\mathrm{ord}(S)$.
\end{corollary}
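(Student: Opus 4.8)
The whole statement will be deduced from Proposition~\ref{prop:preu}, so the plan is to exploit $Q_J^{-1}S(Q_J)=1$, i.e. $S(Q_J)=Q_J$. First I would record that, since $S$ reverses products, for all $h\in u_q$
\[
S_J^2(h)=Q_J^{-1}S\big(Q_J^{-1}S(h)Q_J\big)Q_J=\big(Q_J^{-1}S(Q_J)\big)\,S^2(h)\,\big(Q_J^{-1}S(Q_J)\big)^{-1}=S^2(h),
\]
so $S_J^2=S^2$. In particular $S_J^{2k}=S^{2k}$ for every $k\in\mathbb{Z}$, and since $S_J^{m-1}=S^{m-1}$ for odd $m$ we also get $S_J^{m}=(\ad_{Q_J^{-1}}\circ S)\circ S^{m-1}=\ad_{Q_J^{-1}}\circ S^{m}$ for odd $m\geq 1$, where $\ad_a(x)=axa^{-1}$; the same formula for odd $m<0$ follows from $S_J^{-1}=\ad_{Q_J^{-1}}\circ S^{-1}$ (using $S^{-1}(Q_J)=Q_J$).

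For the order statement I would set $N=\mathrm{ord}(S^2)=\mathrm{ord}(S_J^2)$, which is finite because $S^2=\ad_{K_\rho}$ and $K_\rho^l=1$. For $0<j<2N$ with $j$ even, $S^j=(S^2)^{j/2}\neq\mathrm{id}$ by minimality of $N$; for $j$ odd, $S^j$ is a composite of an odd number of anti-automorphisms, hence an anti-automorphism of the noncommutative algebra $u_q$, so $S^j\neq\mathrm{id}$. The identical reasoning applies to $S_J$ since $u_q^J$ has underlying algebra $u_q$. Hence $\mathrm{ord}(S)=\mathrm{ord}(S_J)=2N$.

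The even case of the trace identity is immediate from $S_J^{2k}=S^{2k}$. For odd $m$ I would write $S^{m}=\ad_{K_\rho^{(m-1)/2}}\circ S$ (using $(S^2)^{(m-1)/2}=\ad_{K_\rho^{(m-1)/2}}$), so that $S_J^{m}=\ad_{Q_J^{-1}K_\rho^{(m-1)/2}}\circ S$, and then invoke a $T$-grading on $u_q$ as in the proof of Proposition~\ref{prop:preu}. Under such a grading $S$ is degree-preserving, $\C[G]$ sits in degree $0$, and the explicit forms of $J$ and $J^{-1}$ in Theorem~\ref{thm:twists} and~\eqref{eq:J21-1} show $Q_J^{\pm1}$ lie in nonnegative degree with degree-zero parts $(Q_J^{-1})_0=m(\S^{-1}\Omega_{L^\perp}^{-1/2})=m(\Omega_{L^\perp}^{-1/2})=:w$ and $(Q_J)_0=w^{-1}$ (here $m(\S^{-1})=1$ since an antisymmetric bicharacter on a group of odd order is trivial on the diagonal). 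Consequently $\ad_{Q_J^{-1}K_\rho^{(m-1)/2}}$, and hence $S_J^m$, preserves the decreasing filtration $F^{\geq d}=\bigoplus_{e\geq d}(u_q)^{(e)}$, and the induced operator on the associated graded $\bigoplus_d (u_q)^{(d)}\cong u_q$ is exactly $\ad_{wK_\rho^{(m-1)/2}}\circ S=\ad_{w}\circ S^m$. Since a filtered operator and its associated graded have equal trace, $\Tr(S_J^m)=\Tr(\ad_{w}\circ S^m)$.

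It then remains to see $\Tr(\ad_{w}\circ S^m)=\Tr(S^m)$, which I would get by showing $\ad_{w}\circ S^m$ and $S^m$ are conjugate as linear operators. The key point is that $w=m(\Omega_{L^\perp}^{-1/2})$ has a ``symmetric square root'': take $b:=m(\Omega_{L^\perp}^{-1/4})\in\C[G]^{\times}$, which satisfies $S(b)=b$ (because $S(P_\mu)=P_{-\mu}$ and $E(-\mu,-\mu)=E(\mu,\mu)$ for any bicharacter $E$) and $b^2=w$. As $b$ is central in $\C[G]$ and $\psi:=S^m$ satisfies $\psi(b)=\ad_{K_\rho^{(m-1)/2}}(S(b))=b$, a one-line check gives $\ad_b\circ\psi\circ\ad_b^{-1}=\ad_{b^2}\circ\psi=\ad_{w}\circ\psi$, hence $\Tr(\ad_{w}\circ S^m)=\Tr(S^m)$. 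This settles $\Tr(S_J^m)=\Tr(S^m)$ for odd $m>0$; for $m<0$ one upgrades using that $S,S_J$ have finite order, so their eigenvalues are roots of unity and $\Tr(A^{-m})=\overline{\Tr(A^m)}$. The main obstacle is precisely this odd-$m$ trace identity: the even case and the order statement are essentially free consequences of $S_J^2=S^2$, whereas the odd case genuinely requires both the $T$-grading reduction to the Cartan part $w$ of $Q_J^{-1}$ and the existence of an $S$-fixed square root of $w$ in $\C[G]$.
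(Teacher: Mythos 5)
Your proposal is correct, and it takes a genuinely different (and more self-contained) route than the paper.  The paper disposes of the trace identity in one line by observing that Proposition~\ref{prop:preu} makes the hypothesis of~\cite[Thm.\ 4.3]{negronng} hold ($Q_J^{-1}S(Q_J)=1$), and then obtains $\mathrm{ord}(S_J)=\mathrm{ord}(S)$ as a \emph{consequence} of the trace equalities together with semisimplicity of $S$ and $S_J$ (semisimple finite-order operators with equal power traces have the same eigenvalue multiset, hence the same order).  You instead give a direct, citation-free argument: $S(Q_J)=Q_J$ forces $S_J^2=S^2$, which immediately handles the even powers and (together with the elementary observation that an odd power of an antipode is an anti-automorphism, hence never the identity on a noncommutative algebra) yields $\mathrm{ord}(S_J)=2\,\mathrm{ord}(S^2)=\mathrm{ord}(S)$ \emph{without} any appeal to the trace statement.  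For the odd-power traces you re-use the $T$-grading idea from the paper's own proof of Proposition~\ref{prop:preu}: $S_J^m=\mathrm{ad}_{Q_J^{-1}}\circ S^m$ preserves the associated decreasing filtration, so its trace equals that of its associated graded $\mathrm{ad}_w\circ S^m$ with $w=(Q_J^{-1})_0=m(\Omega_{L^\perp}^{-1/2})$, and this is conjugate to $S^m$ via the $S^m$-fixed square root $b=m(\Omega_{L^\perp}^{-1/4})\in\C[G]$ (all fractional powers being legitimate since $l$ is odd, and $m(\S^{-1})=1$ since $\S$ is antisymmetric on a group of odd exponent).  All of your individual steps check out: $S_J^{-1}=\mathrm{ad}_{Q_J^{-1}}\circ S^{-1}$ via $S^{-1}(Q_J)=Q_J$, the associated-graded operator is indeed $\mathrm{ad}_w\circ S^m$ because $Q_J^{\pm1}$ sit in nonnegative $T$-degree with degree-zero part $w^{\pm1}$, and $\mathrm{ad}_b\circ S^m\circ\mathrm{ad}_b^{-1}=\mathrm{ad}_{b^2}\circ S^m$ uses only that $S^m$ is anti-multiplicative and fixes $b$.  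The trade-off: the paper's version is shorter but opaque (it outsources the mechanism to~\cite{negronng}); yours makes the mechanism explicit and, as a bonus, decouples the order statement from the trace statement, which the paper's argument does not.
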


\begin{proof}
Since $Q_J^{-1}S(Q_J)=1$ the proof of~\cite[Thm. 4.3]{negronng} still works to get $\mathrm{Tr}(S_J^m)=\mathrm{Tr}(S^m)$.  Since $S$ and $S_J$ are semisimple operators invariance of order follows from invariance of the traces.
\end{proof}

We can also get invariance of the so-called regular object of~\cite[Sect. 5.4]{shimizu15} using the condition of~\cite[Prop. 7.3 (ii)]{negronng}.  This positively answers~\cite[Question (5.12)]{shimizu15} for the twists $J_{T,\S}$ on small quantum groups.

\section{Twisted automorphisms and group actions on $\rep(u_q(\g))$}
\label{sect:G}

We use below the notion of a $2$-group.  A $2$-group is simply a monoidal category in which all morphisms are invertible and all objects have a weak inverse, i.e. an inverse up to isomorphism.  For a tensor category $\mathscr{C}$ we let $\underline{\mathrm{Aut}}(\mathscr{C})$ denote the $2$-group of autoequivalences of $\mathscr{C}$ as a tensor category, with natural isomorphisms, and $\mathrm{Aut}(\mathscr{C})$ denote the associated group of isoclasses of autoequivalences.
\par

Following Davydov~\cite{davydov10}, for a Hopf algebra $H$ we call a pair $(\phi,J)$ of a twist and a Hopf isomorphism $\phi:H\to H^J$ a {\it twisted automorphism} of $H$.  Each twisted automorphism can be identified with the tensor autoequivalence of $\rep(H)$ given by composing
\[
\rep(H)\overset{J}\to \rep(H^J)\overset{\mathrm{res}_\phi}\to \rep(H). 
\]
Indeed, twisted automorphisms form a $2$-subgroup in the $2$-group of autoequivalences $\underline{\mathrm{Aut}}(\rep(H))$ with product
$(\phi',J')\cdot (\phi,J)=(\phi\phi',J\phi^{\ox 2}(J'))$.  The induced isomorphisms between twisted automorphisms are gauge equivalences (see~\ref{sect:G2} below).  Furthermore, Ng and Schauenburg have shown that when $H$ is finite dimensional any autoequivalence of $\rep(H)$ will be isomorphic to a twisted automorphism~\cite[Thm. 2.2]{ngschauenburg08}.

We take $\g$ simple and simply laced, $q$ a primitive $l$th root of unity, for $l$ as in Section~\ref{sect:bdt}, and $u_q=u_q(\g)$.  In this final section we introduce twists $J^\lambda_\alpha$ of the small quantum group $u_q(\g)$ which are paired with automorphisms $\exp_\alpha^\lambda$ so that each pair $(\exp_\alpha^\lambda,J^\lambda_\alpha)$ provides a twisted automorphism of $u_q$.  We then relate a canonical algebraic group action on $\rep(u_q)$ to the twisted automorphisms $(\exp_\alpha^\lambda,J^\lambda_\alpha)$, and propose a question regarding a set of ``generators" for the collection of all twists of $u_q$.

\subsection{Twists via exponentiation: an extended quantum coadjoint action}

Recall that $u_q$ embeds as a Hopf subalgebra in Lusztig's divided powers quantum group
\[
U_q=U_q(\g)=\C\langle K^{\pm 1}_\alpha, E_\alpha,F_\alpha, E^{(l)}_\alpha,F^{(l)}_\alpha:\alpha\in \Gamma\rangle/(\mathrm{relations}).
\]
We do not recall the specific construction of $U_q$ here, and refer the reader instead to~\cite{lusztig90,lusztig89}, and in particular~\cite[Sect. 6.5]{lusztig90}, for the details.
\par

According to~\cite[Lem. 4.5]{lusztig89} the commutator
\[
\mathrm{ad}_{E^{(l)}_\alpha}:U_q\to U_q,\ \ x\mapsto [E^{(l)}_\alpha, x]
\]
preserves the subalgebra $u_q$ and the restriction $\mathrm{ad}_{E^{(l)}_\alpha}|u_q$ is a nilpotnent operator.  The same is true if we scale by any $\lambda\in \C$.  Hence we can exponentiate this operator to produce an algebra automorphism
\[
\exp^\lambda_\alpha:=\exp(\mathrm{ad}_{\lambda E^{(l)}_\alpha}|u_q)
\]
of the small quantum group $u_q$.  We can similarly define
\[
\exp^\lambda_{-\alpha}:=\exp(\mathrm{ad}_{\lambda F^{(l)}_\alpha}|u_q).
\]
If we consider $u_q(\sl_2)$ for example, and $\exp_+^\lambda$ corresponding to the positive simple root, we have $\exp_+^\lambda(E)=\exp^\lambda_+(K)=0$ and 
\[
\exp^\lambda_+(F)=F+\lambda\left(\frac{qK+q^{-1}K^{-1}}{q-q^{-1}}\right)E^{(l-1)}.
\]

As $E^{(l)}_\alpha$ fails to be primitive, the automorphism $\exp^\lambda_{\alpha}$ fails to be a Hopf map.  We have, in the ambient algebra $U_q$,
\[
\Delta(E^{(l)}_\alpha)=E^{(l)}_\alpha\ox 1+1\ox E^{(l)}_\alpha+\sum_{1\leq i\leq l-1}q^{-i(l-i)}K^iE^{(l-i)}\ox E^{(i)}
\]
and can define the element
\[
\O(E_\alpha)=\Delta(E^{(l)}_\alpha)-(E^{(l)}_\alpha\ox 1+1\ox E^{(l)}_\alpha)
\]
in $u_q\ox u_q$.  Note that $\O(E_\alpha)$ is square zero, and hence we can exponentiate any scaling $\lambda\O(E_\alpha)$ to arrive at a unit
\[
J^\lambda_\alpha=\exp(\lambda\O(E_\alpha))\in u_q\ox u_q.
\]
We define similarly $J^\lambda_{-\alpha}=\exp(\lambda\O(F_\alpha))$ for $\O(F_\alpha)=\Delta(F^{(l)}_\alpha)-(F^{(l)}_\alpha\ox 1-1\ox F^{(l)}_\alpha)$.  One can check easily from the expressions of $\O(E_\alpha)$ and $\O(F_\alpha)$ that
\[
(\epsilon\ox 1)(J^\lambda_{\pm\alpha})=(1\ox \epsilon)(J^\lambda_{\pm\alpha})=1.
\]

\begin{theorem}\label{thm:twstauts}
For an arbitrary simple root $\alpha$, and $\lambda\in \C$, the unit $J^\lambda_{\pm\alpha}$ is a twist for $u_q(\g)$.  Furthermore, each pair $(\exp^{\lambda}_{\pm\alpha},J_{\pm\alpha}^\lambda)$ is a twisted automorphism of $u_q(\g)$.
\end{theorem}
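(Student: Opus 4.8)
The plan is to prove both assertions at once by presenting $\exp^\lambda_\alpha$ as the exponential of a locally nilpotent derivation of $u_q$ and carefully tracking how $\Delta$ interacts with that derivation; I will do the case $+\alpha$ and get $-\alpha$ by the same argument. The only algebraic input beyond the formulas already displayed in the text is a commutation relation in Lusztig's $U_q$: the element $P:=E^{(l)}_\alpha$ commutes with $K_\alpha$ (since $K_\alpha P K_\alpha^{-1}=q^{(\alpha,\alpha)l}P=q^{2l}P=P$) and with every divided power $E^{(j)}_\alpha$ (from $E^{(a)}_\alpha E^{(b)}_\alpha=\binom{a+b}{a}_q E^{(a+b)}_\alpha=\binom{a+b}{b}_q E^{(a+b)}_\alpha=E^{(b)}_\alpha E^{(a)}_\alpha$). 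I would record as a consequence that the three elements $P\ox 1$, $1\ox P$ and $\O(E_\alpha)=\sum_{1\le i\le l-1}q^{-i(l-i)}K_\alpha^i E^{(l-i)}_\alpha\ox E^{(i)}_\alpha$ pairwise commute in $U_q\ox U_q$.

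Next I would set $\phi=\exp^\lambda_\alpha=\exp(\lambda\,\mathrm{ad}_P|_{u_q})$ and $J=J^\lambda_\alpha=1+\lambda\O(E_\alpha)$ (the exponential collapses since $\O(E_\alpha)^2=0$). By \cite[Lem. 4.5]{lusztig89}, $\mathrm{ad}_P$ preserves the \emph{finite-dimensional} algebra $u_q$ and is nilpotent there, so $\phi$ is a genuine algebra automorphism of $u_q$ and $J$ a genuine unit of $u_q\ox u_q$ with $J^{-1}=1-\lambda\O(E_\alpha)$. Since $\Delta$ is an algebra map and $\Delta(P)=P\ox1+1\ox P+\O(E_\alpha)$, one has $\Delta\circ\mathrm{ad}_P^n=\mathrm{ad}_{\Delta(P)}^n\circ\Delta$ on $u_q$, where on the right $\mathrm{ad}_{\Delta(P)}=\mathrm{ad}_{P\ox1}+\mathrm{ad}_{1\ox P}+\mathrm{ad}_{\O(E_\alpha)}$ is a sum of three pairwise-commuting nilpotent operators on $u_q\ox u_q$ (this is where the commutations of the first paragraph enter, and also where one uses that $\mathrm{ad}_P$ preserves $u_q$). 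Exponentiating — all sums are finite — and observing that on $u_q\ox u_q$ the first two summands restrict to $\mathrm{ad}_P\ox\mathrm{id}$ and $\mathrm{id}\ox\mathrm{ad}_P$ while $\exp(\lambda\,\mathrm{ad}_{\O(E_\alpha)})=\mathrm{ad}_J$, I obtain the identity
\[
\Delta(\phi(x))=(\phi\ox\phi)\big(J\,\Delta(x)\,J^{-1}\big),\qquad x\in u_q.
\]
Applying the same exponentiated operator identity to $\O(E_\alpha)$ in place of $\Delta(x)$, and using $\mathrm{ad}_{P\ox1}(\O(E_\alpha))=\mathrm{ad}_{1\ox P}(\O(E_\alpha))=0$, I also get $(\phi\ox\phi)(J)=J$.

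Combining these two facts finishes everything. Because $(\phi\ox\phi)$ is an algebra map fixing $J$,
\[
\Delta^{J}(\phi(x))=J^{-1}\Delta(\phi(x))J=J^{-1}(\phi\ox\phi)\big(J\,\Delta(x)\,J^{-1}\big)J=(\phi\ox\phi)(\Delta(x)),
\]
so $\Delta^{J}\circ\phi=(\phi\ox\phi)\circ\Delta$, i.e. $\Delta^{J}=(\phi\ox\phi)\circ\Delta\circ\phi^{-1}$ since $\phi$ is bijective. Thus $\Delta^J$ is a transport of the coassociative map $\Delta$ along an algebra automorphism, hence is itself coassociative; for the invertible element $J$ this is precisely the $2$-cocycle identity (see \cite{EGNO}), and together with $(\epsilon\ox1)(J)=(1\ox\epsilon)(J)=1$ from the text this shows $J^\lambda_\alpha$ is a twist. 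The same identity $\Delta^{J}\circ\phi=(\phi\ox\phi)\circ\Delta$ says exactly that $\phi\colon u_q\to u_q^{J}$ is a coalgebra map; as it is an algebra isomorphism with $\epsilon\circ\phi=\epsilon$, it is a bialgebra isomorphism between Hopf algebras, hence a Hopf isomorphism, so $(\exp^\lambda_\alpha,J^\lambda_\alpha)$ is a twisted automorphism. For $-\alpha$ I would rerun the argument with $F^{(l)}_\alpha$, $K_\alpha^{-1}$ and $\O(F_\alpha)$ in place of $P$, $K_\alpha$ and $\O(E_\alpha)$; ad-nilpotency on $u_q$, the displayed coproduct of $F^{(l)}_\alpha$, $\O(F_\alpha)^2=0$, and the analogous commutations all hold verbatim (alternatively, transport everything through the algebra automorphism of $U_q$ swapping $E_\alpha\leftrightarrow F_\alpha$, $K_\gamma\mapsto K_\gamma^{-1}$).

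The main obstacle, and the reason I would phrase the proof via derivations rather than via conjugation by a grouplike, is that there is no element ``$\exp(\lambda E^{(l)}_\alpha)$'' available: $E^{(l)}_\alpha$ is only ad-nilpotent on $u_q$, not nilpotent in $U_q$ (nor topologically nilpotent in any natural completion), so the tempting shortcut ``$\Delta(g)=J(g\ox g)$ plus coassociativity gives the cocycle'' is not literally justified. The argument above replaces it by honest identities of finite-sum exponentials of nilpotent operators on the finite-dimensional space $u_q\ox u_q$; once that framework is set up, the only genuine computation left is the commutation $[E^{(l)}_\alpha,\,K_\alpha^i E^{(l-i)}_\alpha]=0$ in $U_q$.
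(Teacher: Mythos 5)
The proposal correctly identifies and establishes the key commutation facts — centrality of $P=E^{(l)}_\alpha$ in the positive Borel of $U_q(\sl_2)$ and hence pairwise commutation of $P\ox 1$, $1\ox P$, and $\O(E_\alpha)$ — and the derivation of the two identities $\Delta^J\circ\phi=(\phi\ox\phi)\circ\Delta$ and $(\phi\ox\phi)(J)=J$ is sound (and the second identity is in fact the paper's route to the twisted-automorphism claim). Your observation that there is no element $\exp(\lambda E^{(l)}_\alpha)$ in $U_q$, so one cannot argue via a grouplike, is also correct. But the final step contains a genuine gap: coassociativity of $\Delta^J$ does \emph{not} imply the dual $2$-cocycle identity for $J$. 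Writing $A=(\Delta\ox 1)(J)(J\ox 1)$ and $B=(1\ox\Delta)(J)(1\ox J)$, one computes $(\Delta^J\ox 1)\Delta^J(h)=A^{-1}\Delta_3(h)A$ and $(1\ox\Delta^J)\Delta^J(h)=B^{-1}\Delta_3(h)B$, so coassociativity of $\Delta^J$ says only that $BA^{-1}$ centralizes $\Delta_3(u_q)$ in $u_q^{\ox 3}$; it does not force $A=B$. That centralizer is nontrivial (it contains $Z(u_q)^{\ox 3}$), so the implication is not formal. In quasi-bialgebra language, twisting a bialgebra by an arbitrary counital invertible $J$ always produces a quasi-bialgebra; it produces an honest bialgebra (with coassociative comultiplication) precisely when the resulting associator can be gauged to $1$, which is weaker than being equal to $1$.

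The repair is short and stays inside your framework. Instead of deducing the cocycle identity from coassociativity of $\Delta^J$, verify the \emph{additive} identity
\[
(\Delta\ox 1)(\O(E_\alpha)) + \O(E_\alpha)\ox 1 \;=\; (1\ox\Delta)(\O(E_\alpha)) + 1\ox\O(E_\alpha),
\]
which is immediate from coassociativity of the ambient $\Delta$ applied to $P$ together with $\Delta(P)=P\ox 1 + 1\ox P + \O(E_\alpha)$. (The paper instead checks this by directly expanding both sides into divided-power monomials; the two arguments agree.) To pass from the additive identity to the multiplicative cocycle condition you must factor the exponential, and for that you need $\big[(\Delta\ox 1)(\O(E_\alpha)),\ \O(E_\alpha)\ox 1\big]=0$ and its mirror — this is exactly the paper's Lemma~\ref{lem:1376}, and it follows from the same centrality of $E^{(l)}_\alpha$ in $U_+$ that you already invoked, just applied one tensor degree higher. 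With that in hand, $\exp\big((\Delta\ox 1)(\lambda\O(E_\alpha))+\lambda\O(E_\alpha)\ox 1\big)=(\Delta\ox 1)(J^\lambda_\alpha)\,(J^\lambda_\alpha\ox 1)$ and likewise on the right, and the additive identity gives $A=B$. The rest of your argument (Hopf map claim, counit conditions, and the symmetric case of $-\alpha$) then goes through as written.
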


We will only prove the result for positive $\alpha$, the computation for $-\alpha$ being completely similar.  Let us first give a technical lemma.

\begin{lemma}\label{lem:1376}
The elements $(\Delta\ox 1)(\O(E_\alpha))$ and $\O(E_\alpha)\ox 1$ commute, as do the elements $(1\ox \Delta)(\O(E_\alpha))$ and $1\ox \O(E_\alpha)$.
\end{lemma}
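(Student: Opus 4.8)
The plan is to reduce everything to the fact that $E_\alpha^{(l)}$ is central in the rank‑one divided‑powers subalgebra of $U_q$. Write $X=E_\alpha^{(l)}$, so that $\O(E_\alpha)=\Delta(X)-X\ox 1-1\ox X$. Applying $\Delta\ox 1$ and $1\ox\Delta$ and using coassociativity of $\Delta$ on $U_q$ (write $\Delta^{(2)}=(\Delta\ox 1)\Delta=(1\ox\Delta)\Delta$), one gets
\[
(\Delta\ox 1)(\O(E_\alpha))=\Delta^{(2)}(X)-\Delta(X)\ox 1-1\ox 1\ox X,\qquad \O(E_\alpha)\ox 1=\Delta(X)\ox 1-X\ox 1\ox 1-1\ox X\ox 1,
\]
\[
(1\ox\Delta)(\O(E_\alpha))=\Delta^{(2)}(X)-X\ox 1\ox 1-1\ox\Delta(X),\qquad 1\ox\O(E_\alpha)=1\ox\Delta(X)-1\ox X\ox 1-1\ox 1\ox X.
\]

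Next I would record the key algebraic input. Let $U_\alpha\subset U_q$ be the subalgebra generated by $K_\alpha^{\pm 1}$ and the divided powers $E_\alpha^{(a)}$, $a\ge 0$. The divided‑power relation $E_\alpha^{(a)}E_\alpha^{(b)}=\binom{a+b}{a}_q E_\alpha^{(a+b)}$ is symmetric in $a$ and $b$, and $K_\alpha E_\alpha^{(l)}K_\alpha^{-1}=q^{l(\alpha,\alpha)}E_\alpha^{(l)}=E_\alpha^{(l)}$ because $q^{l(\alpha,\alpha)}=q^{2l}=1$; hence $X=E_\alpha^{(l)}$ is central in $U_\alpha$. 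From the explicit formula $\O(E_\alpha)=\sum_{1\le i\le l-1}q^{-i(l-i)}K_\alpha^i E_\alpha^{(l-i)}\ox E_\alpha^{(i)}$ we see $\O(E_\alpha)\in U_\alpha\ox U_\alpha$, hence $\Delta(X)=X\ox 1+1\ox X+\O(E_\alpha)\in U_\alpha\ox U_\alpha$; and centrality of $X$ gives $[\O(E_\alpha),X\ox 1]=[\O(E_\alpha),1\ox X]=0$, whence $[\Delta(X),X\ox 1]=[\Delta(X),1\ox X]=0$ in $U_\alpha\ox U_\alpha$. Likewise $X\ox 1\ox 1$, $1\ox X\ox 1$, $1\ox 1\ox X$ are central in $U_\alpha^{\ox 3}$.

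Then the computation is short. Discarding the central terms $X\ox 1\ox 1$, $1\ox X\ox 1$, $1\ox 1\ox X$ from both arguments of the first commutator leaves
\[
[(\Delta\ox 1)(\O(E_\alpha)),\,\O(E_\alpha)\ox 1]=[\Delta^{(2)}(X)-\Delta(X)\ox 1,\,\Delta(X)\ox 1]=[\Delta^{(2)}(X),\,\Delta(X)\ox 1].
\]
Writing $\Delta^{(2)}(X)=(\Delta\ox 1)(\Delta(X))$ and $\Delta(X)\ox 1=(\Delta\ox 1)(X\ox 1)$ and using that $\Delta\ox 1$ is an algebra map, this equals $(\Delta\ox 1)([\Delta(X),X\ox 1])=0$. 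Identically, after discarding central terms, $[(1\ox\Delta)(\O(E_\alpha)),\,1\ox\O(E_\alpha)]=[\Delta^{(2)}(X),\,1\ox\Delta(X)]=(1\ox\Delta)([\Delta(X),1\ox X])=0$.

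The only point that genuinely needs care is the centrality claim for $E_\alpha^{(l)}$, i.e. checking the divided‑power multiplication rule and the $K_\alpha$‑commutation relation inside Lusztig's $U_q$ at the root of unity $q$ (and that all elements in sight lie in tensor powers of $U_\alpha$); the rest is bookkeeping with coassociativity and the homomorphism property of $\Delta$, $\Delta\ox 1$, $1\ox\Delta$. Note that this lemma is exactly what powers the cocycle identity for $J^\lambda_\alpha=\exp(\lambda\O(E_\alpha))$: once the two relevant exponents commute, $\exp$ of a sum factors, and the displayed formulas above show $(\Delta\ox 1)(\lambda\O(E_\alpha))+\lambda\O(E_\alpha)\ox 1=(1\ox\Delta)(\lambda\O(E_\alpha))+1\ox\lambda\O(E_\alpha)$, so both sides of the dual cocycle equation coincide.
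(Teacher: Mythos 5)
Your proof is correct and rests on the same two pillars as the paper's: reduce to the rank-one positive Borel (your $U_\alpha$, the paper's $U_+$ in $U_q(\sl_2)$), where $E_\alpha^{(l)}$ is central, and use that $\Delta\ox 1$ (resp.\ $1\ox\Delta$) is an algebra map to pull the commutator inside. The only cosmetic difference is that you first expand both operands of each commutator in terms of $\Delta^{(2)}(X)$, $\Delta(X)\ox 1$, and central Sweedler-one-slot tensors and then discard the central pieces wholesale, whereas the paper moves $E^{(l)}\ox 1$ and $pE^{(l)}\ox 1$ through the expression step by step; the underlying mechanism is identical.
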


\begin{proof}
Since $E_\alpha$ lay in the Hopf subalgebra $U_q(\sl_2)\subset U_q(\g)$ generated by $K_\alpha$ and the $E^{(n)}_\alpha$, $F^{(n)}_\alpha$, we may assume $\g=\sl_2$.  We may further restrict to the positive Borel $U_+$, in which $E^{(l)}$ is central.  Take $\O E=\O(E)$ and $pE^{(l)}=E^{(l)}\ox 1+1\ox E^{(l)}$.  We have now
\[
\begin{array}{l}
(\Delta\ox 1)(\O E)(\O E\ox 1)\\
=(\Delta\ox 1)(\O E)(\Delta E^{(l)}\ox 1)-(\Delta\ox 1)(\O E\ox 1)(pE^{(l)}\ox 1)\\
=(\Delta\ox 1)\left(\O E(E^{(l)}\ox 1)\right)-(\Delta\ox 1)(\O E\ox 1)(pE^{(l)}\ox 1)\\
=(\Delta\ox 1)\left((E^{(l)}\ox 1)\O E\right)-(pE^{(l)}\ox 1)(\Delta\ox 1)(\O E\ox 1)\\
=\left((\Delta\ox 1)(E^{(l)}\ox 1)-(pE^{(l)}\ox 1)\right)(\Delta\ox 1)(\O E)\\
=(\O E\ox 1)(\Delta\ox 1)(\O E).
\end{array}
\]
This gives the first proposed commutativity
\[
(\Delta\ox 1)(\O E)(\O E\ox 1)=(\O E\ox 1)(\Delta\ox 1)(\O E).
\]
The verification of the relation
\[
(1\ox\Delta)(\O E)(1\ox\O E)=(1\ox\O E)(1\ox\Delta)(\O E)
\]
is completely similar.
\end{proof}

Since all of the elements in the statement of Lemma~\ref{lem:1376} are nilpotent in $u_q^{\ox 3}$ we can now exponentiate to get
\begin{equation}\label{eq:1404}
\begin{array}{rl}
\exp\left((\Delta\ox 1)(\lambda\O E_\alpha)+(\lambda\O E_\alpha\ox 1)\right)&=\exp\left((\Delta\ox 1)(\lambda\O E_\alpha)\right)\exp\left(\lambda\O E_\alpha\ox 1\right)\\
&=(\Delta\ox 1)\left(\exp(\lambda\O E_\alpha)\right)\exp\left(\lambda\O E_\alpha\ox 1\right)\\
&=(\Delta\ox 1)\left(J_\alpha^\lambda\right)\left(J_\alpha^\lambda\ox 1\right)
\end{array}
\end{equation}
and
\begin{equation}\label{eq:1412}
\exp\left((1\ox\Delta)(\lambda\O E_\alpha)+(1\ox \lambda\O E_\alpha)\right) = (1\ox \Delta)(J_\alpha^\lambda)(1\ox J_\alpha^\lambda),
\end{equation}
for arbitrary $\lambda\in \C$.

\begin{proof}[Proof of Theorem~\ref{thm:twstauts}]
Again, we may assume $\g=\sl_2$.  By the above observations~(\ref{eq:1404}, \ref{eq:1412}) the dual cocycle condition for $J^\lambda_\alpha$ is equivalent to the equality
\[
(\Delta\ox 1)(\lambda\O E)+(\lambda\O E\ox 1)=(1\ox\Delta)(\lambda\O E)+(1\ox \lambda\O E).
\]
By dividing by $\lambda$ on both sides we may take $\lambda=1$.  We then see directly
\[
(\Delta\ox 1)(\O E)+(\O E\ox 1)=\sum_{\stackrel{0<i,j,k<l}{i+j+k=l}} q^{i(j+k)+jk}K^{i(j+k)}E^{(i)}\ox K^kE^{(j)}\ox E^{(k)}
\]
\[
=(1\ox\Delta)(\O E)+(1\ox \O E).
\]
Hence $J^\lambda_\alpha$ is a twist.
\par

As for compatibility with the automorphism $\exp^\lambda_\alpha$, we have the diagram
\[
\xymatrix{
u_q\ar[rr]^{\mathrm{ad}_{\lambda E^{(l)}_\alpha}}\ar[d]_\Delta & & u_q\ar[d]^\Delta\\
u_q\ox u_q\ar[rr]_{\mathrm{ad}_{\Delta \lambda E^{(l)}_\alpha}} & & u_q\ox u_q
}
\]
which implies the diagram
\[
\xymatrix{
u_q\ar[rr]^{\exp_\alpha^\lambda}\ar[d]_\Delta & & u_q\ar[d]^\Delta\\
u_q\ox u_q\ar[rr]_{\mathrm{exp}(\mathrm{ad}_{\Delta \lambda E^{(l)}_\alpha})} & & u_q\ox u_q.
}
\]
Since $\Delta E^{(l)}_\alpha=E^{(l)}_\alpha\ox 1+1\ox E^{(l)}_\alpha+\O(E_\alpha)$, we have 
\[
\mathrm{exp}(\mathrm{ad}_{\Delta \lambda E^{(l)}_\alpha})=(\exp_\alpha^\lambda\ox \exp_\alpha^\lambda)\mathrm{Ad}_{J^\lambda_\alpha},
\]
where $\mathrm{Ad}_u(x)=uxu^{-1}$, and the above diagram gives on elements
\[
(\exp_\alpha^\lambda\ox \exp_\alpha^\lambda)\Delta^{J^{-\lambda}_\alpha}(x)=\Delta(\exp_\alpha^\lambda(x)).
\]
Replace $x$ with $\exp^{-\lambda}_\alpha(x)$, compose with $(\exp_\alpha^{-\lambda}\ox \exp_\alpha^{-\lambda})$, and swap $\lambda$ for $-\lambda$ to find that $\Delta^{J^{\lambda}_\alpha}(\exp_\alpha^\lambda(x))=(\exp_\alpha^\lambda\ox \exp_\alpha^\lambda)\Delta(x)$.  So we see $\exp_{\alpha}^\lambda:u_q\to u_q^{J^\lambda_\alpha}$ is a Hopf map.
\end{proof}

One can check easily 
\[
(\exp_{\pm\alpha}^{\lambda},J_{\pm\alpha}^{\lambda})\cdot(\exp_{\pm\alpha}^{\lambda'},J_{\pm\alpha}^{\lambda'})=(\exp_{\pm\alpha}^{\lambda'+\lambda},J_{\pm\alpha}^{\lambda'+\lambda}).
\]
It follows that the assignment $\lambda\mapsto (\exp_{\pm\alpha}^{-\lambda},J_{\pm\alpha}^{-\lambda})$ gives a $1$-parameter subgroup in the $2$-group of twisted automorphisms for $u_q$, and hence a $1$-parameter subgroup $\C\to \underline{\mathrm{Aut}}(\rep(u_q))$ into the $2$-group of autoequivalences $\underline{\mathrm{Aut}}(\rep(u_q))$.  The negation here appears for technical reasons, but intuitively corrects the fact that the multiplication of twisted automorphisms defined above appears to be backwards.  We denote this $1$-parameter subgroup $\omega_{\pm\alpha}$.
\par

\begin{remark}
The algebra automorphisms appearing in the $1$-parameter subgroups $\omega_{\pm\alpha}$ can be recovered alternatively from the quantum coadjoint action of De Concini and Kac, via the reduction $U^{\mathrm{DK}}_q\to u_q$ from the non-divided-powers quantum group~\cite[Prop. 3.5]{deconcinikac89}.  So we are saying above that the induced quantum coadjoint action on $u_q$ extends naturally to an action on the tensor category $\rep(u_q)$.
\end{remark}

\subsection{Identification with the Arkhipov-Gaitsgory action} Take ${\Theta}$ the connected, simply connected, semisimple algebraic group with Lie algebra $\g$.  As a set we identify ${\Theta}$ with its $\mathbb{C}$-points.  Taking the (finite) dual of the exact sequence of Hopf algebras $\C\to u_q(\g)\to U_q(\g)\to U(\g)\to \C$ produces an exact sequence
\[
\C\to \O({\Theta})\to \O_q({\Theta})\to u_q(\g)^\ast\to \C
\]
with $\O({\Theta})$ laying in the center of the quantum function algebra~\cite[Thm. 6.3, Lem. 6.1]{deconcinilyubashenko94}.  According now to~\cite[Thm. 2.8]{arkhipovgaitsgory03} and~\cite[Prop. 4.1]{agp14} we have a tensor equivalence between the de-equivariantization $\mathrm{corep}\left(\O_q({\Theta})\right)_{\Theta}$ and $\rep(u_q(\g))$.  We take $\O=\O(\Theta)$ and $\O_q=\O_q(\Theta)$.
\par

Recall that the de-equivariantization is the category of finitely generated left $\O$-modules with a compatible right $\O_q$-coaction~\cite[Def. 3.7]{agp14}.  This category is monoidal under the product $\ox_{\O}$.  The action of ${\Theta}$ on itself by left translation, and pushing forward by the corresponding automorphisms of $\O$, gives an action of ${\Theta}$ on the de-equivariantization by tensor functors.  Rather, we have a canonical monoidal functor from ${\Theta}$ to the $2$-group of tensor autoequivalences of $\mathrm{corep}\left(\O_q\right)_{\Theta}$.  The equivalence $\mathrm{corep}\left(\O_q\right)_{\Theta}\overset{\sim}\to \rep(u_q)$ of~\cite{arkhipovgaitsgory03} is given by taking the fiber at the identity $?|_\epsilon=\C\ox_{\O}?$, and via this equivalence we get an action of ${\Theta}$ on $\rep(u_q)$.
\par

We let $\gamma_{\pm\alpha}$ denote the $1$-parameter subgroup in ${\Theta}$ given by exponentiating the root space $\g_{\pm\alpha}$.

\begin{proposition}\label{prop:G}
For any simple root $\alpha\in \Gamma$, the composite 
\[
\C\overset{\gamma_{\pm\alpha}}\longrightarrow {\Theta}\to \underline{\Aut}\left(\mathrm{corep}\left(\O_q\right)_{\Theta}\right)\overset{\mathrm{Ad}_{?|_\epsilon^{-1}}}\to\underline{\mathrm{Aut}}(\rep(u_q))
\]
is isomorphic to the $1$-parameter subgroup $\omega_{\pm\alpha}:\lambda\mapsto (\exp_{\pm\alpha}^{-\lambda},J_{\pm\alpha}^{-\lambda})$.
\end{proposition}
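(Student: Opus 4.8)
The plan is to transport the Arkhipov--Gaitsgory action through the fiber functor $?|_\epsilon$, recognize the resulting $1$-parameter family of tensor autoequivalences of $\rep(u_q)$ as twisted automorphisms --- which is legitimate by~\cite[Thm.\ 2.2]{ngschauenburg08} --- and then identify the one attached to $\gamma_{\pm\alpha}(\lambda)$ with $(\exp_{\pm\alpha}^{-\lambda},J^{-\lambda}_{\pm\alpha})$. As in the proof of Theorem~\ref{thm:twstauts} one may reduce to $\g=\sl_2$, since $\gamma_{\pm\alpha}$, $\exp_{\pm\alpha}^\lambda$, and $J^\lambda_{\pm\alpha}$ are all built from the rank-one datum attached to $\alpha$ inside $U_q(\sl_2)\subset U_q(\g)$. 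Since a twisted automorphism is determined up to isomorphism by its underlying Hopf automorphism together with the gauge class of its twist, it then suffices to pin down these two pieces of data.

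On the source side I would use that $\O=\O({\Theta})$ sits centrally in $\O_q$, that $\O_q$ is faithfully flat over it, and that $\C_\epsilon\ox_\O\O_q=u_q^\ast$~\cite{deconcinilyubashenko94,arkhipovgaitsgory03}. An object of $\mathrm{corep}\left(\O_q\right)_{\Theta}$ is a finitely generated $\O$-module $M$ with a compatible $\O_q$-coaction; translation by $g\in{\Theta}$ pulls the $\O$-module structure back along left translation; and $?|_\epsilon$ is the Arkhipov--Gaitsgory equivalence. Such an $M$ is locally free over $\O$, its fiber dimension being constant on the connected variety ${\Theta}$, so the transported translation-by-$g$ autoequivalence sends $V=M|_\epsilon$ to the fiber $M|_g$: the same underlying space, but carrying a possibly different $u_q$-module structure, obtained from an algebra identification $(\O_q)_g^\ast\cong u_q$. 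Computing this structure along a unipotent $1$-parameter subgroup $\gamma_{\pm\alpha}$ is the crux.

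For that I would invoke the quantum coadjoint action. By~\cite[Lem.\ 4.5]{lusztig89} each $\ad_{\lambda E^{(l)}_\alpha}$ (resp.\ $\ad_{\lambda F^{(l)}_\alpha}$) is locally nilpotent on $U_q$, so $\Xi_\lambda:=\exp(\ad_{\lambda E^{(l)}_\alpha})$ is an algebra automorphism of $U_q$ restricting to $\exp^\lambda_\alpha$ on $u_q$, and dualizes to an automorphism $\Xi_\lambda^\ast$ of $\O_q$. The key input --- De Concini--Kac's quantum coadjoint action~\cite{deconcinikac89} reduced to $u_q$ (the remark following Theorem~\ref{thm:twstauts}), together with its identification with the translation action through the Arkhipov--Gaitsgory equivalence~\cite{arkhipovgaitsgory03} --- is that the family $\{\Xi_\lambda^\ast\}$ implements translation along $\gamma_{\pm\alpha}$ on the central $\O({\Theta})\subset\O_q$, so that the transported translation-by-$\gamma_{\pm\alpha}(\lambda)$ autoequivalence of $\rep(u_q)$ is isomorphic to pullback of the $u_q$-action along $\exp^{-\lambda}_{\pm\alpha}$ (the sign reflecting the direction of translation). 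It remains to see that the accompanying modification of the tensor structure is the twist $J^{-\lambda}_{\pm\alpha}$, and this is exactly the identity
\[
\Delta^{J_{\pm\alpha}^{-\lambda}}\!\circ\exp_{\pm\alpha}^{-\lambda}=(\exp_{\pm\alpha}^{-\lambda}\ox\exp_{\pm\alpha}^{-\lambda})\circ\Delta
\]
established inside the proof of Theorem~\ref{thm:twstauts}. Thus the Arkhipov--Gaitsgory autoequivalence attached to $\gamma_{\pm\alpha}(\lambda)$ equals $\mathrm{res}_{\exp^{-\lambda}_{\pm\alpha}}\circ(J^{-\lambda}_{\pm\alpha})=\omega_{\pm\alpha}(\lambda)$, and compatibility with the $1$-parameter-subgroup structures follows from $\Xi_\lambda\Xi_{\lambda'}=\Xi_{\lambda+\lambda'}$.

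The main obstacle is precisely this middle input: establishing, with the correct sign, that the dual of the quantum coadjoint automorphism $\Xi_\lambda$ acts on the spectrum ${\Theta}$ of the central subalgebra of $\O_q$ by translation along $\gamma_{\pm\alpha}$ --- that is, that the De Concini--Kac flow on $u_q$ represents the Arkhipov--Gaitsgory ${\Theta}$-action. Once that is in hand, the reduction to $\sl_2$ together with the computation already performed for Theorem~\ref{thm:twstauts} finishes the identification routinely.
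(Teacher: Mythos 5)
Your high-level plan (transport the Arkhipov--Gaitsgory action through $?|_\epsilon$, express the result as a twisted automorphism, identify the Hopf automorphism with $\exp^{-\lambda}_{\pm\alpha}$ via the quantum coadjoint action, and account for the tensor structure with the identity $\Delta^{J^{-\lambda}_{\pm\alpha}}\circ\exp^{-\lambda}_{\pm\alpha}=(\exp^{-\lambda}_{\pm\alpha})^{\ox 2}\circ\Delta$ from Theorem~\ref{thm:twstauts}) matches the spirit of the paper's argument. But the step you single out as the ``main obstacle'' is where the real content lies, and as you have stated it, it fails. The linear map $\Xi_\lambda^\ast$ on $\O_q$, which in the paper's notation is $\mathrm{Ad}_{v}$ with $v=\exp(\mp\lambda E^{(l)}_\alpha)$ viewed as a functional, restricts on the central subalgebra $\O(\Theta)$ to the pullback of \emph{conjugation} by $\gamma_{\pm\alpha}(\lambda)$, not of left translation: for $f\in\O$, $(\mathrm{Ad}_v f)(h)=v(f_1)f_2(h)v^{-1}(f_3)=f(\gamma^{-1}h\gamma)$ since $v|_\O$ is the point $\gamma_{\pm\alpha}(\mp\lambda)$. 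So ``the dual of $\Xi_\lambda$ acts on $\Theta$ by translation'' is not a lemma waiting to be proved; it is false.

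What the paper actually does is compare two genuinely different autoequivalences of the de-equivariantization: $x^\lambda_\ast$ (push-forward along left translation on the $\O$-module structure) and $F^\lambda$ (restrict the $\O_q$-coaction along $\mathrm{Ad}_{v^\lambda}$, twist the tensor structure by the $2$-cocycle $J^{-\lambda}_\alpha$, and simultaneously restrict the $\O$-action along the conjugation $\mathrm{Ad}_{x^\lambda}$). The heart of the proof is the explicit natural isomorphism $x^\lambda_\ast\xrightarrow{\ \sim\ }F^\lambda$ given by composing the $\O_q$-coaction with $1\ox v^\lambda$, which is then pushed through the induction $\mathrm{Ind}$ and the fiber $?|_\epsilon$. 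No claim of the form ``conjugation $=$ translation'' is made or needed. Your proposal never produces this intertwiner and leaves the crux unresolved. A secondary gap: you reduce to $\g=\sl_2$, but unlike in Theorem~\ref{thm:twstauts} this is not immediate here, since $\O_q(\Theta)$, the central $\O(\Theta)$, and the Arkhipov--Gaitsgory equivalence are all attached to the full group $\Theta$ rather than to the $SL_2$-triple at $\alpha$; the paper does not perform this reduction and a justification would be required. Finally, the step ``each fiber $M|_g$ carries a $u_q$-module structure via $(\O_q)^\ast_g\cong u_q$'' is non-canonical; the paper avoids it by working entirely at the identity fiber and moving structures with $\mathrm{Ad}_v$ and the intertwiner $v^\lambda$.
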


What one should mean by a general isomorphism of $1$-parameter subgroups is not exactly clear.  From our perspective we would like a family of natural isomorphisms between the two functors $\C\to \underline{\mathrm{Aut}}(\rep(u_q))$ which satisfy all obvious commutativity and additivity relations.  We will focus here only on the production of a natural family of natural isomorphisms which vary with $\lambda$.

\begin{proof}
We consider only the positive root $\alpha$.  Since high powers of $E^{(l)}_\alpha$ annihilate any finite dimensional representation, each function $f$ in the finite dual $\O_q$ will vanish on high powers of $E_\alpha^{(l)}$~\cite[Prop. 5.1]{lusztig89}.  Hence the exponent
\[
\exp(\lambda E_\alpha^{(l)}):\O_q\to \mathbb{C}
\]
is a well-defined function.  Restricting along the inclusion $\O\to \O_q$ recovers the point $\gamma_{\alpha}(\lambda)=\exp(\lambda e_\alpha)$ in ${\Theta}$.  Let us fix $x=x^\lambda=\gamma_{\alpha}(\lambda)$ and $v=v^\lambda=\exp(-\lambda E_\alpha^{(l)})$.
\par

Take $\mathrm{Ad}_{v}:\O_q\to \O_q$ the linear automorphism $f\mapsto v(f_1)f_2 v^{-1}(f_3)$, where $v^{-1}=\exp(\lambda E_\alpha^{(l)})$.  We note that $\mathrm{Ad}_v$ is a Hopf isomorphism from the cocycle twist of $\O_q$ via the $2$-cocycle $J_\alpha^{-\lambda}:\O_q\ox \O_q\to \C$ to $\O_q$, and so the sequence
\[
\mathrm{corep}(\O_q)\overset{J_\alpha^{-\lambda}}\to \mathrm{corep}((\O_q)_{J_\alpha^{-\lambda}})\overset{\mathrm{res}_{\mathrm{Ad}_v}}\to \mathrm{corep}(\O_q)
\]
is an equivalence.  This equivalence induces an equivalence on the equivariantization, where we additionally restrict the action of $\O$ along $\mathrm{Ad}_x$.  We denote this autoequivalence by $F^\lambda:\mathrm{corep}(\O_q)_{\Theta}\to \mathrm{corep}(\O_q)_{\Theta}$.
\par

We have now an isomorphism of monoidal functors $x^\lambda_\ast\overset{\cong}\to F^\lambda$ given on objects as the composite
\[
x_\ast^\lambda V\overset{\mathrm{comult}}\longrightarrow (x_\ast^\lambda V)\ox \O_q\overset{1\ox v^\lambda}\longrightarrow F^\lambda V.
\]
This equivalence is simply given by multiplying by the function $v^\lambda$, and we denote the isomorphism simply by $v^\lambda$.\footnote{The interested reader can check that the family of isomorphisms $\{v^\lambda\}_\lambda$ satisfies all desired commutativity and additivity relations to give an isomorphism between these two $1$-parameter subgroups in $\underline{\Aut}\left(\mathrm{corep}\left(\O_q\right)_{\Theta}\right)$.}  The quasi-inverse to the reduction $?|_\epsilon:\mathrm{corep}\left(\O_q\right)_{\Theta}\to\rep(u_q)$ is the induction-like functor $\mathrm{Ind}=(\O_q\ox ?)^{u_q}$, where $u_q$ acts on a product $\O_q\ox V$ diagonally by $h\cdot (f\ox v)=(fS(h_1)\ox h_2v)$.  We compose with the $2$-group map to $\underline{\mathrm{Aut}}(\rep(u_q))$ to get an induced isomorphism of $1$-parameter subgroups
\[
\dot{v}^\lambda=(?|_\epsilon)\circ v^\lambda\circ \mathrm{Ind}:(?|_\epsilon)\circ x_\ast^\lambda\circ \mathrm{Ind}\to (?|_\epsilon)\circ F^\lambda\circ \mathrm{Ind}.
\]
\par

Since $\mathrm{Ad}_x:\O\to \O$ preserves the counit, and $\mathrm{Ad}_v$ induces the automorphism $\exp^{-\lambda}_\alpha$ on the quotient $u_q^\ast$ (or rather its dual), taking the fiber at the identity gives 
\[
(F^\lambda\mathrm{Ind}(V))|_\epsilon=({_{\exp^{-\lambda}_\alpha}u_q^\ast}\ox V)^{u_q}={_{\exp^{-\lambda}_\alpha}(u_q^\ast\ox V)^{u_q}}
\]
for each $u_q$-representation $V$.  The subscript of $\exp^{-\lambda}_\alpha$ here means that we are restricting the action of $u_q$ along this automorphism.  But now the natural isomorphism of $u_q$-modules $ev_1\ox 1:(u_q^\ast\ox V)^{u_q}\to V$ given by the counit of $u_q^\ast$ produces the desired family of natural isomorphism
\[
\ddot{v}^\lambda:(?|_\epsilon)\circ x_\ast^\lambda\circ \mathrm{Ind}\overset{\dot{v}^\lambda}\longrightarrow (?|_\epsilon)\circ F^\lambda\circ \mathrm{Ind}\overset{ev_1\ox 1}\longrightarrow (\exp^{-\lambda}_\alpha,J^\lambda_\alpha).
\]
\end{proof}

Recall that $\Theta$ is generated by the $1$-parameter subgroups $\gamma_{\pm\alpha}$~\cite[Thm. 27.5]{humphreys12}.  Hence, after taking isoclasses, the group map ${\Theta}\to \mathrm{Aut}(\rep(u_q))$ is determined completely by its value on these $1$-parameter subgroups.  One can also show that the action of $G$ on $\rep(u_q)$ is determined up to unique isomorphism by these $1$-parameter subgroups, but this more limited information is enough for us to formulate Question~\ref{quest:G} below, which proposes a set of generators for the groupoid of twists of $u_q(\g)$.

\subsection{Autoequivalences of $\rep(u_q(\g))$ and the classification of twists}
\label{sect:G2}

Question~\ref{quest:G} below refers to gauge equivalence of twists.  We say two twists $J$ and $J'$ are gauge equivalent if there is a unit $v$ in $H$ with $J'=\Delta(v)J(v^{-1}\ox v^{-1})$.  We let $\mathsf{TW}(H)$ denote the groupoid of twists of $H$, with morphisms given by gauge equivalences.
\par

We note that the information of an isomorphism from the autoequivalence specified by a twisted automorphisms $(\phi,J)$ to that of $(\phi',J')$ is exactly the data of a unit $v\in H$ so that $J'=\Delta(v)J(v^{-1}\ox v^{-1})$ and $\phi'=\mathrm{Ad}_v\phi$.  We call such a unit a gauge equivalence of twisted automorphisms.  Hence we have naturally a $2$-group of twisted automorphisms with gauge equivalences.
\par

As is noted in~\cite{davydov10}, the groupoid $\mathsf{TW}(H)$ admits a well-defined right action of the $2$-group of twisted automorphisms.  This action is defined simply by $J'\cdot (\phi,J)=J\phi^{\ox 2}(J')$.  Take
\[
\tilde{{\Theta}}=\left\{\begin{array}{c}
\text{The $2$-subgroup of all twisted automorphisms which are iso-}\\
\text{morphic to an element in the image of }{\Theta}\text{, in }\underline{\mathrm{Aut}}(\rep(u_q))\end{array}\right\}
\]
\[
\hspace{5mm}=\left\{\begin{array}{c}
\text{The $2$-subgroup of twisted automorphisms which are}\\
\text{gauge equivalent to a product of the }(\exp_{\pm\alpha}^\lambda,J^\lambda_{\pm\alpha})\end{array}\right\}.
\]
Take also $\mathsf{BD}(u_q)\subset \mathsf{TW}(u_q)$ the full subcategory of Belavin-Drinfeld twists $\{J_{T,\S}\}_{T,\S}$.  The following question is also raised in~\cite{DEN}, where the authors investigate the algebraic structure of $\Aut(\rep(u_q))$, and autoequivalence groups of finite tensor categories in general.

\begin{question}\label{quest:G}
Is the groupoid of twists of the small quantum group generated by $\mathrm{BD}(u_q)$ and the $1$-paramater subgroups $\{(\exp_{\pm\alpha}^\lambda,J^{\lambda}_{\pm\alpha})\}_\lambda$?  Equivalently, is the inclusion $\mathsf{BD}(u_q)\cdot\tilde{{\Theta}}\to \mathsf{TW}(u_q)$ an equivalence?
\end{question}

\bibliographystyle{abbrv}
%\bibliography{en_notes}

\end{document}